\newcommand*{\mailto}[1]{\href{mailto:#1}{\nolinkurl{#1}}}
\newcommand{\arxiv}[1]{\href{http://arxiv.org/abs/#1}{arXiv:#1}}
\newcommand{\bbC}{{\mathbb{C}}}
\newcommand{\bbN}{{\mathbb{N}}}
\newcommand{\bbR}{{\mathbb{R}}}
\newcommand{\bbZ}{{\mathbb{Z}}}
\newcommand{\beq}{\begin{equation}}
\newcommand{\enq}{\end{equation}}
\newcommand{\g}{\gamma}
\DeclareMathOperator{\supp}{supp}
\DeclareMathOperator{\rank}{rank}
\DeclareMathOperator{\dom}{dom}
\renewcommand{\Re}{\text{\rm Re}}
\renewcommand{\Im}{\text{\rm Im}}
\renewcommand{\ln}{\text{\rm ln}}
\newcommand{\no}{\notag}
\newcommand{\lb}{\label}
\newcommand{\f}{\frac}
\newcommand{\ol}{\overline}
\newcommand{\bs}{\backslash}
\newcommand{\wti}{\widetilde}
\newcommand{\Oh}{O}
\newcommand{\oh}{o}
\newcommand{\hatt}{\widehat} 
\newcommand{\dott}{\,\cdot\,}
\newcommand{\bi}{\bibitem}
\let\geq\geqslant
\let\leq\leqslant
\newcommand{\la}{\lambda}
\newcommand{\lam}{\lambda}
\newcommand{\al}{\alpha}
\newcommand{\be}{\beta}
\newcommand{\ga}{\gamma}
\newcommand{\vl}{\big\vert}
\newcommand{\Lr}{{L^2((a,b);rdx)}} 
\newcommand{\AC}{{AC([a,b])}}
\newcommand{\ACl}{{AC_{loc}((a,b))}}
\newcommand{\Ll}{{L^1_{loc}((a,b);dx)}}
\def\theequation{\@arabic\c@equation}
\numberwithin{equation}{section}
\newtheorem{theorem}{Theorem}[section]
\newtheorem{lemma}[theorem]{Lemma}
\newtheorem{definition}[theorem]{Definition}
\newtheorem{hypothesis}[theorem]{Hypothesis}
\theoremstyle{remark}
\newtheorem{remark}[theorem]{Remark}
\begin{document}

\title[Singular Sturm--Liouville Operators]{On Self-Adjoint Boundary Conditions for Singular Sturm--Liouville Operators Bounded From Below} 

\author[F.\ Gesztesy]{Fritz Gesztesy}
\address{Department of Mathematics, 
Baylor University, One Bear Place \#97328,
Waco, TX 76798-7328, USA}
\email{\mailto{Fritz\_Gesztesy@baylor.edu}}
\urladdr{\url{http://www.baylor.edu/math/index.php?id=935340}}

\author[L.\ L.\ Littlejohn]{Lance L. Littlejohn}
\address{Department of Mathematics, 
Baylor University, One Bear Place \#97328,
Waco, TX 76798-7328, USA}
\email{\mailto{Lance\_Littlejohn@baylor.edu}}
\urladdr{\url{http://www.baylor.edu/math/index.php?id=53980}}

\author[R.\ Nichols]{Roger Nichols}
\address{Department of Mathematics, The University of Tennessee at Chattanooga, 
415 EMCS Building, Dept.~6956, 615 McCallie Ave, Chattanooga, TN 37403, USA}
\email{\mailto{Roger-Nichols@utc.edu}}
\urladdr{\url{http://www.utc.edu/faculty/roger-nichols/index.php}}


\date{\today}
\subjclass[2010]{Primary: 34B09, 34B24, 34C10, 34L40; Secondary: 34B20, 34B30, 34K11.}
\keywords{Singular Sturm--Liouville operators, boundary values, boundary conditions, Weyl $m$-functions.}

\begin{abstract} 
We extend the classical boundary values 
\begin{align}
\begin{split} 
g(a) &= - W(u_a(\lambda_0, \dott), g)(a) =  \lim_{x \downarrow a} \f{g(x)}{\hatt u_a(\lambda_0,x)},   \lb{23.0.1} \\
g^{[1]}(a) &= (p g')(a) = W(\hatt u_a(\lambda_0, \dott), g)(a) 
= \lim_{x \downarrow a} \f{g(x) - g(a) \hatt u_a(\lambda_0,x)}{u_a(\lambda_0,x)},    
\end{split} 
\end{align}
for regular Sturm--Liouville operators associated with differential expressions of the type 
$\tau = r(x)^{-1}[-(d/dx)p(x)(d/dx) + q(x)]$ for a.e.~$x\in(a,b) \subset \bbR$, to the case where $\tau$ is singular 
on $(a,b) \subseteq \bbR$ and the associated minimal operator $T_{min}$ is bounded from below. Here 
$u_a(\lambda_0, \dott)$ and $\hatt u_a(\lambda_0, \dott)$ denote suitably normalized principal and nonprincipal solutions of 
$\tau u = \lambda_0 u$ for appropriate $\lambda_0 \in \bbR$, respectively.

Our approach to deriving the analog of \eqref{23.0.1} in the singular context employing principal and nonprincipal solutions of $\tau u = \lambda_0 u$ is closely related to a seminal 1992 paper by Niessen and Zettl \cite{NZ92}. We also recall the well-known fact that the analog of the boundary values in \eqref{23.0.1} characterizes all self-adjoint extensions of $T_{min}$ in the singular case in a manner familiar from the regular case. 

We briefly discuss the singular Weyl--Titchmarsh--Kodaira $m$-function and finally illustrate the theory in some detail with the examples of the Bessel, Legendre, and Kummer (resp., Laguerre) operators.
\end{abstract}

\maketitle


{\scriptsize{\tableofcontents}}

\section{Introduction} \lb{23.s1} 

The principal purpose of this paper is to introduce generalized boundary values for singular Sturm--Liouville 
operators on arbitrary intervals $(a,b) \subseteq \bbR$, bounded from below, generated by differential expressions 
of the type 
\begin{equation}
\tau = \f{1}{r(x)}\left[-\f{d}{dx}p(x)\f{d}{dx} + q(x)\right] \, \text{ for a.e.~$x\in(a,b) \subseteq \bbR$,} 
   \lb{23.1.1}
\end{equation} 
which naturally extend the ones in the case where $\tau$ is regular, and which are equivalent (in fact, equal) to the standard ones defined in terms of Wronskians involving a fundamental system of principal and nonprincipal solutions 
of $\tau u = \lambda_0 u$ for appropriate $\lambda_0 \in \bbR$. 

To describe this in some detail we focus on the left endpoint $a$ for simplicity and assume temporarily that $\tau$ is regular on the (then necessarily finite) interval 
$(a,b)$, and that $\hatt u_a$ is a normalized solution of $\tau u = \lambda_0 u$ satisfying
\begin{equation}
\hatt u_a(\lambda_0,a) =1, \quad [p(x) \hatt u_a'(\lambda_0,x)]|_{x=a} = \hatt u_a^{[1]}(\lambda_0,a) = 0  
\end{equation}
($u^{[1]}$ denoting the first quasi-derivative of $u$).  Given $\hatt u_a$, we introduce a second, linearly independent solution of $\tau u = \lambda_0 u$, by the variation of parameters method, 
\begin{equation}
u_a(\lambda_0,x) = \hatt u_a(\lambda_0,x) \int_a^x dx' \, p(x')^{-1} [\hatt u_a(\lambda_0,x')]^{-2}, 
\end{equation}
such that
\begin{equation}
u_a(\lambda_0,a) =0, \quad [p(x) u_a'(\lambda_0,x)]|_{x=a} = u_a^{[1]}(\lambda_0,a) = 1, 
\end{equation}
and hence the Wronskian of $\hatt u_a$ and $u_a$ is normalized as well, 
\begin{equation}
W\big(\hatt u_a(\lambda_0, \dott), u_a(\lambda_0, \dott)\big) 
=  \hatt u_a(\lambda_0,a)  u_a^{[1]}(\lambda_0,a) - \hatt u_a^{[1]}(\lambda_0,a) u_a(\lambda_0,a) = 1.
\end{equation}
Given these facts, we now take a closer look at the classical boundary values in the case where $\tau$ is regular 
at the endpoint $a$,
\begin{equation}
g(a) \, \text{ and } \, g^{[1]}(a) \, \text{ for $g \in \dom(T_{max})$,}
\end{equation}
where $T_{max}$ abbreviates the maximal operator in $L^2((a,b); rdx)$ associated with $\tau$ (see Sections \ref{23.s2} 
and \ref{23.s3} for properties of elements $g \in \dom(T_{max})$).  Then one computes, 
\begin{align}
\begin{split} 
- W(u_a(\lambda_0, \dott), g)(a) 
&= \lim_{x \downarrow a} \big[u_a^{[1]}(\lambda_0,x) g(x) - u_a(\lambda_0,x) g^{[1]}(x)\big] 
= g(a)    \\
&= \lim_{x \downarrow a} \f{g(x)}{\hatt u_a(\lambda_0,x)}.
\end{split} 
\end{align} 
In addition, one obtains
\begin{align}
W(\hatt u_a(\lambda_0, \dott), g)(a) 
&= \lim_{x \downarrow a} \big[\hatt u_a(\lambda_0,x) g^{[1]}(x) - \hatt u_a^{[1]}(\lambda_0,x) g(x)\big]  \no \\
&= \lim_{x \downarrow a} \hatt u_a(\lambda_0,x) g^{[1]}(x) \no \\
&= g^{[1]}(a). 
\end{align}
On the other hand, by L'H{\^ o}pital's rule,
\begin{align}
& \lim_{x \downarrow a} \f{g(x) - g(a) \hatt u_a(\lambda_0,x)}{u_a(\lambda_0,x)} 
= \f{0}{0} =  \lim_{x \downarrow a} \f{g'(x) - g(a) \hatt u_a'(\lambda_0,x)}{u_a'(\lambda_0,x)} 
\cdot \f{p(x)}{p(x)}     \no \\
& \quad =  \lim_{x \downarrow a} \f{g^{[1]}(a) + o(1)}{u_a^{[1]}(\lambda_0,a) + \oh(1)}   \no \\ 
&\quad = g^{[1]}(a).   \lb{23.1.9} 
\end{align}
Summarizing, the two classical boundary values $g(a)$ and $g^{[1]}(a)$ in the case where $\tau$ is regular on $[a,b]$, satisfy,
\begin{align}
g(a) &= - W(u_a(\lambda_0, \dott), g)(a) =  \lim_{x \downarrow a} \f{g(x)}{\hatt u_a(\lambda_0,x)},   \lb{23.1.11} \\
g^{[1]}(a) &= W(\hatt u_a(\lambda_0, \dott), g)(a) 
= \lim_{x \downarrow a} \f{g(x) - g(a) \hatt u_a(\lambda_0,x)}{u_a(\lambda_0,x)}.    \lb{23.1.12}   
\end{align}

The main purpose of this paper is to extend \eqref{23.1.11}, \eqref{23.1.12} to the case where $\tau$ is singular on $(a,b)$, and the associated minimal operator $T_{min}$ is bounded from below, by identifying $u_a(\lambda_0, \dott)$ and 
$\hatt u_a(\lambda, \dott)$ as {\it principal} and {\it nonprincipal} solutions of $\tau u = \lambda_0 u$ (cf.\ Section \ref{23.s4} for their precise definition), respectively. In particular, we recall in Theorem \ref{23.t7.3.11} how the use of the resulting generalized boundary values characterized, for instance, by the right-hand sides of \eqref{23.1.11}, \eqref{23.1.12}, describes 
all self-adjoint extensions of  $T_{min}$. At this point it seems fair to stress that the use of principal and nonprincipal solutions in this context was pioneered by Rellich \cite{Re51} in 1951 (see also his paper \cite{Re43}, which foreshadowed the one in 1951) and later used to perfection in the 1992 paper by Niessen and Zettl \cite{NZ92}.  

The first equalities in either of \eqref{23.1.11} and \eqref{23.1.12} are well-known in the singular context, see, for instance, 
\cite{NZ92}. Moreover, we emphasize that the second equality in \eqref{23.1.11} was derived in the seminal paper by Niessen and Zettl \cite{NZ92}. As far as we know, the second equality in \eqref{23.1.12} is a new wrinkle in this context (inspired by work of Rellich \cite{Re43} and \cite{BG85}, \cite{GP84}) and constitutes our the main contribution to this circle of ideas. 

We also note that Niessen and Zettl \cite{NZ92} rely on the possibility of transforming a singular, nonoscillatory Sturm--Liouville problem at a limit circle endpoint into a regular one at that endpoint (see also \cite{AF94}). We avoid this technique in this paper and instead employ most elementary analytical manipulations only. 

We could have written a short note and just focused on Theorem \ref{23.t7.3.11}. Instead, we decided to make this more readable and somewhat self-contained by introducing nearly all basic objects entering into our considerations. Section \ref{23.s2} recalls the case of regular Sturm--Liouville operators and their self-adjoint boundary conditions, Section \ref{23.s3} then provides the same in the singular context. Section \ref{23.s4} then recalls principal and nonprincipal solutions in the case where $T_{min}$ is bounded from below (and, hence, so are all its self-adjoint extensions), and proves our main result, Theorem \ref{23.t7.3.11}. In Section \ref{23.s5}, we touch on Weyl--Titchmarsh functions and their connections to the generalized boundary values introduced in Section \ref{23.s4}.  Finally, Section \ref{23.s6} illustrates these concepts with the help of a number of representative examples such as the Bessel, Legendre, and Laguerre (resp., Kummer) operators.

Finally, we comment on some of the basic notation used throughout this paper.  If $T$ is a linear operator mapping (a subspace of) a Hilbert space into another, then $\dom(T)$ and $\ker(T)$ denote the domain and kernel (i.e., null space) of $T$. The spectrum and resolvent set of a closed linear operator in a Hilbert space will be denoted by $\sigma(\cdot)$ and $\rho(\cdot)$, respectively.

\section{Self-Adjoint Regular Sturm--Liouville Operators} \lb{23.s2}

To set the stage and introduce the notation for our principal Section \ref{23.s3}, we now briefly recall the basic facts on 
regular Sturm--Liouville operators and their self-adjoint boundary conditions. Everything in this section is standard and well-known, hence we just refer to some of the standard monographs on this subject, such as, 
\cite[Sect.~6.3]{BHS19}, \cite[Sect.~II.5]{JR76}, 
\cite[Ch.~V]{Na68}, \cite[Sect.~8.4]{We80}, \cite[Sect.~13.2]{We03}, \cite[Ch.~4]{Ze05}.

Throughout this section we make the following assumptions:

\begin{hypothesis} \lb{23.h3.1}
Let $(a,b) \subset \bbR$ be a finite interval and suppose that $p,q,r$ are $($Lebesgue\,$)$ measurable functions on $(a,b)$  
such that the following items $(i)$--$(iii)$ hold: \\[1mm] 
$(i)$ $r > 0$ a.e.~on $(a,b)$, $r \in L^1((a,b);dx)$. \\[1mm]
$(ii)$ $p > 0$ a.e.~on $(a,b)$, $1/p \in L^1((a,b);dx)$. \\[1mm]
$(iii)$ $q$ is real-valued a.e.~on $(a,b)$, $q \in L^1((a,b);dx)$.  
\end{hypothesis}

Given Hypothesis \ref{23.h3.1}, we now study Sturm--Liouville operators associated with the general, 
three-coefficient differential expression $\tau$ of the type,
\begin{equation}
\tau=\f{1}{r(x)}\left[-\f{d}{dx}p(x)\f{d}{dx} + q(x)\right] \, \text{ for a.e.~$x\in(a,b) \subseteq \bbR$.} 
   \lb{23.2.1}
\end{equation} 

We start with the notion of minimal and maximal $\Lr$-realizations associated with the regular differential 
expression $\tau$ on the finite interval $(a,b) \subset \bbR$.  Here, and elsewhere throughout this manuscript, the inner product in $\Lr$ is defined by
\begin{equation}
(f,g)_{\Lr} = \int_a^b \ol{f(x)}g(x)r(x)\, dx,\quad f,g\in \Lr.
\end{equation}

\begin{definition} \lb{23.d3.1}
Assume Hypothesis \ref{23.h3.1}. \\[1mm] 
$(i)$ Then the differential expression $\tau$ of the form \eqref{23.2.1} on the finite interval 
$(a,b) \subset \bbR$ is called {\it regular on} $[a,b]$. \\[1mm] 
$(ii)$ The \textit{maximal operator} $T_{max}$ in $\Lr$ associated with $\tau$ is defined by 
\begin{align}
&T_{max} f = \tau f,     \no
\\
& f \in \dom(T_{max})=\big\{g\in\Lr  \, \big| \,  g,g^{[1]}\in\AC;     \lb{23.3.1} \\
& \hspace*{6cm} \tau g\in\Lr\big\}.   \no
\end{align}
The \textit{minimal operator} $T_{min}$ in $\Lr$ associated with $\tau$ is defined by 
\begin{align}
&T_{min} f = \tau f, \no
\\
& f \in \dom(T_{min})=\big\{g\in\Lr  \, \big| \,  g,g^{[1]}\in\AC;     \lb{23.3.2} \\ 
&\hspace*{3cm} g(a)=g^{[1]}(a)=g(b)=g^{[1]}(b)=0; \; \tau g\in\Lr\big\}.  \no 
\end{align}
\end{definition}

The following is then a fundamental result:

\begin{theorem} \lb{23.t3.4}
Assume Hypothesis \ref{23.h3.1} so that $\tau$ is regular on $[a,b]$. Then $T_{min}$ is a densely defined, closed operator in $\Lr$. Moreover, $T_{max}$ is densely defined and closed in $\Lr$, and 
\begin{align}
T_{min}^* = T_{max}, \quad T_{min} = T_{max}^*.
\end{align}
In addition, $T_{min} \subset T_{max} = T_{min}^*$, and hence $T_{min}$ is symmetric. Finally, 
$T_{max}$ is not symmetric. 
\end{theorem}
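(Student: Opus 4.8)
The plan is to establish the four assertions in turn, relying on the standard theory of symmetric operators and the Lagrange identity. First I would record the \emph{Lagrange identity} (Green's formula): for $f,g \in \dom(T_{max})$ and $[\alpha,\beta] \subset (a,b)$,
\[
\int_\alpha^\beta \big[\ol{(\tau f)(x)} g(x) - \ol{f(x)} (\tau g)(x)\big] r(x)\, dx
= W(\ol{f},g)(\beta) - W(\ol{f},g)(\alpha),
\]
which follows by integrating the expression $-(p f')' + qf$ by parts twice. Under Hypothesis~\ref{23.h3.1} the coefficients are integrable on all of $[a,b]$, so the quasi-derivatives and Wronskians extend continuously to the closed interval and the endpoint limits $W(\ol{f},g)(a)$, $W(\ol{f},g)(b)$ exist. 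Taking $\alpha \downarrow a$, $\beta \uparrow b$ gives $(\tau f, g)_{\Lr} - (f, \tau g)_{\Lr} = W(\ol f,g)(b) - W(\ol f,g)(a)$ for all $f,g \in \dom(T_{max})$, and this vanishes when in addition $f \in \dom(T_{min})$ because then all four boundary values of $f$ are zero. This shows $T_{min} \subset T_{max}^*$ and, symmetrically (using $\dom(T_{min}) \subset \dom(T_{max})$), that $T_{min}$ is symmetric, i.e.\ $T_{min} \subset T_{min}^*$; note $T_{max}^* \subseteq T_{min}^*$ follows from $T_{min} \subseteq T_{max}$.

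Next I would verify that $T_{min}$ is \emph{densely defined} and that $T_{max}$ is densely defined. Density of $\dom(T_{min})$ is the crucial technical point and is the step I expect to be the main obstacle: one must show that if $h \in \Lr$ satisfies $(h, g)_{\Lr} = 0$ for all $g \in \dom(T_{min})$, then $h = 0$. The standard route is to note that $\dom(T_{min})$ contains $C_0^\infty((a,b))$ (every smooth compactly supported function lies in $\dom(T_{min})$ since its quasi-derivative is again compactly supported and $\tau$ applied to it is bounded with compact support, hence in $\Lr$), and $C_0^\infty((a,b))$ is dense in $L^2((a,b);rdx)$ because $r \in L^1_{loc}$ with $r>0$ a.e. Since $\dom(T_{min}) \subseteq \dom(T_{max})$, density of $\dom(T_{max})$ is then immediate.

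With density in hand, the adjoints are well-defined closed operators. I would then prove the core identity $T_{min}^* = T_{max}$. The inclusion $T_{max} \subseteq T_{min}^*$ is exactly the Lagrange-identity computation above. For the reverse inclusion $T_{min}^* \subseteq T_{max}$: given $f \in \dom(T_{min}^*)$ with $T_{min}^* f = h$, one constructs (via variation of parameters against a fundamental system of $\tau u = 0$, using that $1/p, q \in L^1$) a particular function $\tilde f$ with $\tilde f, \tilde f^{[1]} \in \AC$ and $\tau \tilde f = h$; integrating by parts shows $f - \tilde f$ is weakly annihilated by $\tau$, forcing $f - \tilde f$ to be a (classical) solution of $\tau u = 0$, hence also in $\AC$ with absolutely continuous quasi-derivative; therefore $f \in \dom(T_{max})$ and $T_{max} f = \tau f = h$. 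This gives $T_{min}^* = T_{max}$. Taking adjoints once more and using that $T_{min}$, being symmetric and (as one checks from the solution-space description of $\dom(T_{max})/\dom(T_{min})$, which is finite-dimensional of dimension $4$) having equal finite deficiency indices, is closable with closure $T_{min}^{**} = T_{min}$; combined with $T_{max}^* = T_{min}^{**} = T_{min}$ this yields $T_{min} = T_{max}^*$ and shows both $T_{min}$ and $T_{max}$ are closed. Finally, $T_{max}$ is \emph{not} symmetric: if it were, then $T_{max} \subseteq T_{max}^* = T_{min}$, forcing $T_{max} = T_{min}$, which is false since, e.g., $\dim(\dom(T_{max})/\dom(T_{min})) = 4 > 0$ (produced explicitly by the four independent boundary functionals $g \mapsto g(a), g^{[1]}(a), g(b), g^{[1]}(b)$, realized on solutions of $\tau u = 0$).
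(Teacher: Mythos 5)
The paper itself offers no proof of Theorem \ref{23.t3.4}; it is quoted as standard with references (Naimark, Weidmann, Zettl, etc.), so your attempt has to be measured against the standard textbook argument. Your overall architecture (Lagrange identity $\Rightarrow$ $T_{min}\subset T_{max}^*$ and symmetry; density; $T_{min}^*=T_{max}$ by variation of parameters plus a du Bois-Reymond type lemma; then dualize) is the right one, but the step you yourself single out as the crux --- density of $\dom(T_{min})$ --- is resolved incorrectly. Under Hypothesis \ref{23.h3.1} the coefficient $p$ is merely measurable with $1/p\in L^1((a,b);dx)$; it need not be continuous, locally bounded, or even locally integrable. Consequently, for $g\in C_0^\infty((a,b))$ the quasi-derivative $g^{[1]}=pg'$ is in general not in $\AC$ (it can be everywhere discontinuous), and moreover $\tau g\in\Lr$ requires $\int|{-(pg')'+qg}|^2 r^{-1}dx<\infty$, which can fail since $1/r$ need not be locally integrable. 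So $C_0^\infty((a,b))\not\subset\dom(T_{min})$ in this generality, and your density argument collapses. The standard proof instead shows directly that the compactly supported elements of $\dom(T_{max})$ are dense: given $h$ orthogonal to all of them, one builds, for each $[\alpha,\beta]\subset(a,b)$ and each $f\in L^2$ supported there and $r$-orthogonal to the solutions of $\tau u=0$, a compactly supported $g$ with $\tau g=f$ by variation of parameters, and a Fubini argument then forces $h$ to coincide a.e.\ with a solution of $\tau u=0$ that is simultaneously annihilated by $\tau$, whence $h=0$. Some such quasi-derivative-adapted construction is unavoidable here.

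There is a second gap at the end. From $T_{min}^*=T_{max}$ you correctly get $T_{max}^*=T_{min}^{**}=\overline{T_{min}}$, but you then identify $T_{min}^{**}$ with $T_{min}$ by invoking ``equal finite deficiency indices,'' which is irrelevant: deficiency indices say nothing about closedness (a nonclosed symmetric operator has the same indices as its closure). What is needed is the separate fact that $T_{min}$ is closed, equivalently that $T_{max}^*=T_{min}$ on the nose; the standard route proves this directly by showing that any $g\in\dom(T_{max})$ with $W(\overline f,g)\vert_a^b=0$ for all $f\in\dom(T_{max})$ has $g(a)=g^{[1]}(a)=g(b)=g^{[1]}(b)=0$, which requires the surjectivity of the boundary-trace map $\dom(T_{max})\to\bbC^4$ (Naimark's patching lemma). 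Note that this surjectivity cannot be obtained from solutions of the single equation $\tau u=0$ (their traces span only a two-dimensional subspace of $\bbC^4$), nor by smooth cutoffs (same problem with $p$ as above); one either patches via the inhomogeneous equation or uses solutions for two distinct spectral parameters. This also affects your claim $\dim(\dom(T_{max})/\dom(T_{min}))=4$ ``realized on solutions of $\tau u=0$''; for the non-symmetry of $T_{max}$ you only need $T_{max}\neq T_{min}$, which a single nontrivial solution of $\tau u=0$ does provide, so that final conclusion survives, but the dimension count and the closedness of $T_{min}$ as you argue them do not.
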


A first glimpse at self-adjoint extensions of $T_{min}$ is provided by the following result.

\begin{lemma} \lb{23.l3.8}
Assume Hypothesis \ref{23.h3.1} so that $\tau$ is regular on $[a,b]$. Then an extension $\wti T$ of \ $T_{min}$ is self-adjoint if and only if
\begin{align}
& \wti T f = \tau f,   \no \\
& f \in \dom\big(\wti T\big)=\big\{g\in\dom(T_{max})  \, \big| \,  W\big(\ol{f},g\big)\vl_a^b=0 \text{ for all } f\in\dom(\wti T)\big\}    \lb{23.3.22}  \\
& \hspace*{1.93cm} 
=\big\{g\in\dom(T_{max})  \, \big| \,  W\big(f,g\big)\vl_a^b=0 \text{ for all } f\in\dom(\wti T)\big\}.   \no
\end{align} 
\end{lemma}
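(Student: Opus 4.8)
The plan is to reduce the assertion to the abstract fact that, for a densely defined symmetric operator, an extension is self-adjoint if and only if it coincides with its own adjoint, and then to make that adjoint fully explicit via Green's formula. First I would record Lagrange's identity
\[
(T_{max} f, g)_{\Lr} - (f, T_{max} g)_{\Lr} = W\big(\ol f, g\big)\big|_a^b, \qquad f, g \in \dom(T_{max}),
\]
where $W(u,v) = u\,v^{[1]} - u^{[1]} v$; this follows from two integrations by parts and is legitimate precisely because $g, g^{[1]}, f, f^{[1]} \in \AC$ and $q$ is real-valued.

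Next, let $\wti T$ be a self-adjoint extension of $T_{min}$. Since $T_{min}$ is densely defined (Theorem \ref{23.t3.4}) and $T_{min} \subseteq \wti T$, passing to adjoints yields $\wti T = \wti T^* \subseteq T_{min}^* = T_{max}$, so $\wti T$ acts as $\tau$ and $\dom(\wti T) \subseteq \dom(T_{max})$; by the same token $\dom(\wti T^*) \subseteq \dom(T_{max})$ for any extension lying between $T_{min}$ and $T_{max}$. For $g \in \dom(T_{max})$, Green's formula shows that $(\wti T f, g)_{\Lr} = (f, \tau g)_{\Lr}$ holds for all $f \in \dom(\wti T)$ if and only if $W(\ol f, g)\big|_a^b = 0$ for all such $f$, whence
\[
\dom(\wti T^*) = \big\{ g \in \dom(T_{max}) \,\big|\, W\big(\ol f, g\big)\big|_a^b = 0 \ \text{for all}\ f \in \dom(\wti T)\big\},
\]
with $\wti T^* g = \tau g$ on this set. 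Inserting $\wti T = \wti T^*$ gives the first equality in \eqref{23.3.22}.

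Conversely, suppose $\wti T$ is an extension of $T_{min}$ whose domain is exactly the right-hand side just displayed and on which $\wti T$ acts as $\tau$. Because $T_{min} \subseteq \wti T$ is densely defined, $\wti T^*$ exists with $\wti T^* \subseteq T_{max}$, and the computation of the preceding paragraph---which used only $T_{min} \subseteq \wti T$ and $\dom(\wti T) \subseteq \dom(T_{max})$---now identifies $\dom(\wti T^*)$ with the very same set, on which $\wti T^*$ too acts as $\tau$; hence $\wti T = \wti T^*$ is self-adjoint. Finally, the equality of the two set-descriptions in \eqref{23.3.22} follows from the conjugation symmetry $\tau \ol h = \ol{\tau h}$ (real coefficients) together with the elementary relation $\ol{W(\ol f, g)\big|_a^b} = W\big(f, \ol g\big)\big|_a^b$, which shows that the two Wronskian conditions select the same subspace of $\dom(T_{max})$.

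The bulk of the argument is routine once Green's formula is in hand; it is essentially the von Neumann-type extension argument specialized to Sturm--Liouville boundary forms. The points needing care are the bookkeeping that guarantees $\dom(\wti T^*) \subseteq \dom(T_{max})$ (so that Green's formula may be applied to pairs from $\dom(\wti T) \times \dom(\wti T^*)$), the self-referential nature of the domain description---one must check that requiring $\dom(\wti T)$ to equal the set built from $\dom(\wti T)$ itself is both consistent and, in the converse direction, strong enough to force the full equality $\dom(\wti T^*) = \dom(\wti T)$ rather than merely one inclusion---and the conjugation step identifying the two boundary-form conditions. I expect the self-referential domain description to be the main conceptual obstacle; everything else is straightforward manipulation of Wronskians and adjoints.
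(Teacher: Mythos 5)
Your treatment of the first equality in \eqref{23.3.22} is correct and is the standard argument (the paper offers no proof of this lemma, referring instead to the monograph literature): Green's identity together with $T_{min}^*=T_{max}$ identifies $\dom\big(\wti T^{\,*}\big)$ with $\big\{g\in\dom(T_{max})\,\big|\, W\big(\ol{f},g\big)\vl_a^b=0 \text{ for all } f\in\dom\big(\wti T\big)\big\}$ for any extension $T_{min}\subseteq \wti T\subseteq T_{max}$ acting as $\tau$, and both directions of the equivalence, including the self-referential converse, then go through exactly as you describe.

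The genuine gap is your final step. The relation $\ol{W\big(\ol{f},g\big)\vl_a^b}=W\big(f,\ol{g}\big)\vl_a^b$ does not show that the two Wronskian conditions select the same subspace; it shows that the second set is the \emph{complex conjugate} of the first, i.e., $\big\{g\in\dom(T_{max})\,\big|\,W(f,g)\vl_a^b=0 \ \forall f\in\dom\big(\wti T\big)\big\}=\big\{\ol{h}\,\big|\,h\in\dom\big(\wti T^{\,*}\big)\big\}$. Equality of the two sets is therefore equivalent to $\dom\big(\wti T\big)$ being invariant under complex conjugation, and this fails for genuinely complex coupled boundary conditions. Concretely, take $\tau=-d^2/dx^2$ on $(0,1)$ and the self-adjoint extension of Theorem \ref{23.t3.10}\,$(ii)$ with $R=I_2$, $\varphi\in(0,\pi)$, so $g(1)=e^{i\varphi}g(0)$, $g^{[1]}(1)=e^{i\varphi}g^{[1]}(0)$: for $f,g$ in this domain one has $W(f,g)(1)=e^{2i\varphi}W(f,g)(0)$, and choosing boundary data $\big(f(0),f^{[1]}(0),f(1),f^{[1]}(1)\big)=(1,0,e^{i\varphi},0)$, $\big(g(0),g^{[1]}(0),g(1),g^{[1]}(1)\big)=(0,1,0,e^{i\varphi})$ gives $W(f,g)\vl_0^1=e^{2i\varphi}-1\neq 0$, even though $W\big(\ol{f},g\big)\vl_0^1=0$ for all such pairs, as self-adjointness demands. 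So the non-conjugated condition characterizes $\wti T^{\,*}=C\,\wti T\,C$ ($C$ = complex conjugation) rather than $\wti T^{\,*}=\wti T$, and the two descriptions coincide only when the domain is conjugation invariant (e.g., separated boundary conditions, or coupled ones with $\varphi\in\{0,\pi\}$). Your one-line conjugation argument cannot be repaired as stated; you must either add that invariance hypothesis or flag that, read literally, the second equality in \eqref{23.3.22} is itself problematic for such extensions.
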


Here the Wronskian of $f$ and $g$, for $f,g\in\ACl$, is defined by
\begin{equation}
W(f,g)(x) = f(x)g^{[1]}(x) - f^{[1]}(x)g(x),    \lb{23.2.3.1} \no \\
\end{equation}
with 
\begin{equation}
y^{[1]}(x) = p(x) y'(x),
\end{equation}
denoting the first quasi-derivative of a function $y\in AC_{loc}((a,b))$.

The next theorem describes the self-adjoint extensions of $T_{min}$ in more detail.

\begin{theorem} \lb{23.t3.9}
Assume Hypothesis \ref{23.h3.1} so that $\tau$ is regular on $[a,b]$. Then $T_{A,B}$ is a self-adjoint extension of $T_{min}$ if and only if there exist $2\times2$ matrices $A$ and $B$ $($with complex-valued entries$)$ satisfying
\begin{align}
\rank(A \quad B) = 2, \quad AJA^* = BJB^*, \quad J=\begin{pmatrix}0&-1\\1&0\end{pmatrix},  \lb{23.3.24}
\end{align}
with $T_{A,B}$ given by 
\begin{align}
\begin{split} 
& T_{A,B} f = \tau f,     \\
& f \in \dom(T_{A,B})=\left\{g\in\dom(T_{max}) \, \bigg| \, A\begin{pmatrix}g(a)\\g^{[1]}(a)\end{pmatrix}=B\begin{pmatrix}g(b)\\g^{[1]}(b)\end{pmatrix} \right\}. \lb{23.3.25}
\end{split} 
\end{align} 
\end{theorem}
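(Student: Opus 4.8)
The plan is to reduce Theorem~\ref{23.t3.9} to the abstract von Neumann--Krein parametrization of self-adjoint extensions via Lemma~\ref{23.l3.8}, using the fact that on a regular interval every element of $\dom(T_{max})$ has well-defined (finite) boundary values $g(a), g^{[1]}(a), g(b), g^{[1]}(b)$, and that all four can be prescribed independently modulo $\dom(T_{min})$. Concretely, I would first record the \emph{Green's}/Lagrange identity: for $f,g\in\dom(T_{max})$,
\begin{equation}
(T_{max}f,g)_{\Lr} - (f,T_{max}g)_{\Lr} = W\big(\ol{f},g\big)\big\vert_a^b = \ol{f(b)}\,g^{[1]}(b) - \ol{f^{[1]}(b)}\,g(b) - \ol{f(a)}\,g^{[1]}(a) + \ol{f^{[1]}(a)}\,g(a). \no
\end{equation}
This is the bilinear form that controls everything. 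The key structural input (surjectivity of the boundary map) is that the map $\dom(T_{max}) \to \bbC^4$, $g \mapsto (g(a),g^{[1]}(a),g(b),g^{[1]}(b))$, is onto with kernel $\dom(T_{min})$; this follows from regularity at both endpoints together with Theorem~\ref{23.t3.4}, and it identifies the deficiency index of $T_{min}$ as $(2,2)$.

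Next I would prove the ``if'' direction. Given $2\times 2$ matrices $A,B$ with $\rank(A\ \ B)=2$ and $AJA^*=BJB^*$, define $T_{A,B}$ by \eqref{23.3.25}. Writing the boundary condition as $M\binom{\underline g(a)}{\underline g(b)}=0$ with $M=(A\ \ {-B})$ a $2\times 4$ matrix of full rank, one checks symmetry of $T_{A,B}$ directly from the Lagrange identity: the condition $AJA^*=BJB^*$ is exactly what forces $W(\ol f,g)\vert_a^b=0$ whenever both $f,g$ satisfy the boundary condition. For self-adjointness one then shows the boundary condition is ``maximal isotropic'' for the (skew-Hermitian) form $\Omega\big((\underline f(a),\underline f(b)),(\underline g(a),\underline g(b))\big) := W(\ol f,g)\vert_a^b = \langle \underline f, \widetilde J\,\underline g\rangle$ where $\widetilde J = \diag(-J,J)$ on $\bbC^4$; since $\rank M=2 = \tfrac12\dim\bbC^4$ and $M$ annihilates its own $\widetilde J$-orthogonal complement (again by $AJA^*=BJB^*$), the subspace $\{g\in\dom(T_{max}) : W(\ol f,g)\vert_a^b=0 \text{ for all } f\in\dom(T_{A,B})\}$ coincides with $\dom(T_{A,B})$, and Lemma~\ref{23.l3.8} gives self-adjointness. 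The replacement of $\ol f$ by $f$ in \eqref{23.3.22} is harmless because $\dom(T_{A,B})$ is invariant under complex conjugation (the entries of $A,B$ being arbitrary complex, but the space of boundary data being a complex-linear subspace that is closed under conjugation once one also conjugates $A,B$ — more precisely the stated equivalence is already built into Lemma~\ref{23.l3.8}).

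For the ``only if'' direction, suppose $\wti T$ is a self-adjoint extension of $T_{min}$. By Lemma~\ref{23.l3.8}, $g\in\dom(\wti T)$ iff $W(\ol f,g)\vert_a^b = 0$ for all $f\in\dom(\wti T)$. Since $\dom(T_{min})\subset\dom(\wti T)\subset\dom(T_{max})$ and the quotient $\dom(T_{max})/\dom(T_{min})\cong\bbC^4$ via the boundary map, $\dom(\wti T)$ corresponds to a linear subspace $\cV\subseteq\bbC^4$ of boundary vectors. The self-adjointness condition translates, via the Lagrange identity, into: $\cV$ is its own orthogonal complement with respect to the form $\Omega$ above, i.e.\ $\cV$ is a Lagrangian subspace of $(\bbC^4,\widetilde J)$; in particular $\dim\cV = 2$. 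Choosing any two linear functionals on $\bbC^4$ whose common kernel is $\cV$ and writing them as the rows of $(A\ \ {-B})$ yields matrices with $\rank(A\ \ B)=2$, and the Lagrangian condition $\Omega|_{\cV\times\cV}=0$ is precisely $AJA^*=BJB^*$ (a short bilinear-algebra computation using $\widetilde J=\diag(-J,J)$). Then $\dom(\wti T)=\dom(T_{A,B})$ by construction.

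The main obstacle is the surjectivity of the boundary-value map $\dom(T_{max})\to\bbC^4$ together with the identification of its kernel as $\dom(T_{min})$ — everything else is linear algebra on $\bbC^4$ with the form $\widetilde J$. That surjectivity step is where regularity at \emph{both} endpoints is essential: one produces, for prescribed boundary data, a function in $\dom(T_{max})$ realizing it by patching together explicit solutions of $\tau u = 0$ near each endpoint (using that solutions and their quasi-derivatives extend continuously to $a$ and $b$ under Hypothesis~\ref{23.h3.1}) with a cutoff, which is legitimate since $\tau g\in\Lr$ is automatic on a regular interval. Since the paper treats this as standard (citing \cite{Na68,We80,Ze05}), I would state it as a lemma and give only the patching argument in one line, then spend the bulk of the proof on the $\widetilde J$-Lagrangian bookkeeping that produces the normalization $AJA^*=BJB^*$ and $\rank(A\ \ B)=2$.
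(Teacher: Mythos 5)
Your proposal is correct: the reduction via Lemma \ref{23.l3.8} to the surjective boundary map $\dom(T_{max})\to\bbC^4$ (with kernel $\dom(T_{min})$, which here is immediate from \eqref{23.3.2}) followed by the identification of self-adjoint domains with Lagrangian subspaces of $(\bbC^4,\wti J)$, where the computation $M\wti J^{-1}M^*=0$ with $M=(A\ \ -B)$ yields exactly $AJA^*=BJB^*$ together with $\rank(A\ \ B)=2$, is sound. The paper itself offers no proof of Theorem \ref{23.t3.9}, treating it as standard and citing the monographs of Naimark, Weidmann, and Zettl, and your argument is essentially the one found in those references, so there is nothing to fault beyond the (acknowledged) need to spell out the patching lemma giving surjectivity of the boundary map.
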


Concluding this section, we turn to the important special cases of separated and coupled boundary conditions which together describe all self-adjoint extensions of $T_{min}$.

\begin{theorem} \lb{23.t3.10}
Assume Hypothesis \ref{23.h3.1} so that $\tau$ is regular on $[a,b]$. Then the following items $(i)$--$(iii)$ hold: \\[1mm] 
$(i)$ All self-adjoint extensions $T_{\al,\be}$ of $T_{min}$ with separated boundary conditions are of the form
\begin{align}
& T_{\al,\be} f = \tau f, \quad \al,\be\in[0,\pi),   \no \\
& f \in \dom(T_{\al,\be})=\big\{g\in\dom(T_{max}) \, \big| \, g(a)\cos(\al)+g^{[1]}(a)\sin(\al)=0;   \lb{23.3.52} \\ 
& \hspace*{5.6cm} g(b)\cos(\be)+g^{[1]}(b)\sin(\be) = 0 \big\}.    \no 
\end{align}
Special cases: $\al=0$, $g(a)=0$ is called the Dirichlet boundary condition at $a$; $\al=\f\pi2$,  
$g^{[1]}(a)=0$ is called the Neumann boundary condition at $a$ $($analogous facts hold at the 
endpoint $b$$)$. \\[1mm]
$(ii)$ All self-adjoint extensions $T_{\varphi,R}$ of $T_{min}$ with coupled boundary conditions are of the type
\begin{align}
\begin{split} 
& T_{\varphi,R} f = \tau f,    \\
& f \in \dom(T_{\varphi,R})=\bigg\{g\in\dom(T_{max}) \, \bigg| \begin{pmatrix}g(b)\\g^{[1]}(b)\end{pmatrix} 
= e^{i\varphi}R \begin{pmatrix}
g(a)\\g^{[1]}(a)\end{pmatrix} \bigg\}, \lb{23.3.53}
\end{split}
\end{align}
where $\varphi\in[0,2\pi)$, and $R$ is a real $2\times2$ matrix with $\det(R)=1$ 
$($i.e., $R \in SL(2,\bbR)$$)$.
Special cases: $\varphi = 0$, $R=I_2$, $g(b)=g(a)$, $g^{[1]}(b)=g^{[1]}(a)$ are called {\it periodic boundary conditions}; 
similarly, $\varphi = \pi$, $R=I_2$, $g(b)=-g(a)$, $g^{[1]}(b)=-g^{[1]}(a)$ are called {\it antiperiodic boundary conditions}. 
\\[1mm] 
$(iii)$ Every self-adjoint extension of $T_{min}$ is either of type $(i)$ $($i.e., separated\,$)$ or of type 
$(ii)$ $($i.e., coupled\,$)$.
\end{theorem}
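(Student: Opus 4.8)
The plan is to deduce everything from Theorem~\ref{23.t3.9}, so the whole argument reduces to $2\times2$ linear algebra. First I would verify that the operators listed in $(i)$ and $(ii)$ genuinely are self-adjoint extensions. For $T_{\al,\be}$ one uses $A=\left(\begin{smallmatrix}\cos\al&\sin\al\\0&0\end{smallmatrix}\right)$, $B=\left(\begin{smallmatrix}0&0\\\cos\be&\sin\be\end{smallmatrix}\right)$; for $T_{\vp,R}$ one uses $A=e^{i\vp}R$, $B=I_2$. In each case $\rank(A \quad B)=2$ and $AJA^*=BJB^*$ follow from a one-line computation --- in the coupled case from $\det(R)=1$ and $R^*=R^{\top}$, in the separated case from the identity $(\cos\t,\sin\t)J(\cos\t,\sin\t)^*=0$ --- so Theorem~\ref{23.t3.9} applies. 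The Dirichlet/Neumann and periodic/antiperiodic assertions are then just the substitutions $\al,\be\in\{0,\pi/2\}$, resp.\ $\vp\in\{0,\pi\}$, $R=I_2$.

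For the converse, fix any self-adjoint extension; by Theorem~\ref{23.t3.9} it equals $T_{A,B}$ for some $(A,B)$ satisfying \eqref{23.3.24}. I would open with the dichotomy: taking determinants in $AJA^*=BJB^*$ and using $\det(J)=1$ yields $|\det(A)|^2=|\det(B)|^2$, so $A$ is invertible if and only if $B$ is. I would also record the elementary fact, used in both cases below, that replacing $(A,B)$ by $(CA,CB)$ for any invertible $C\in\bbC^{2\times2}$ leaves $\dom(T_{A,B})$ unchanged and preserves the conditions \eqref{23.3.24} (since $(CA)J(CA)^*=C(AJA^*)C^*$); this is what lets me normalize $(A,B)$.

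\emph{Coupled case ($A,B$ invertible).} Left-multiplying by $C=B^{-1}$ I may assume $B=I_2$, so that the boundary condition reads $\binom{g(b)}{g^{[1]}(b)}=N\binom{g(a)}{g^{[1]}(a)}$ with $N:=A$, and $AJA^*=BJB^*$ becomes $NJN^*=J$. The crux is the (standard but slightly fiddly) linear-algebra claim: a $2\times2$ complex matrix $N$ with $NJN^*=J$ has the form $N=e^{i\vp}R$ with $\vp\in[0,2\pi)$ and $R\in SL(2,\bbR)$. I expect this to be the main obstacle. One notes first that $|\det(N)|=1$, writes $\det(N)=e^{2i\vp}$ and sets $R:=e^{-i\vp}N$, so $\det(R)=1$ and $RJR^*=J$; it then remains to show that such an $R=\left(\begin{smallmatrix}a&b\\c&d\end{smallmatrix}\right)$ has real entries. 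The relation $RJR^*=J$ unpacks into the three scalar conditions $\bar ab\in\bbR$, $\bar cd\in\bbR$, $a\bar d-b\bar c=1$; combining these with $ad-bc=1$ and assuming that some entry is non-real quickly forces a contradiction (e.g.\ if $\Im(a)\ne0$ one deduces in turn that $c$ and $d$ are real, then that $\bar ab\notin\bbR$). Granting the claim, $T_{A,B}$ is exactly the operator $T_{\vp,R}$ of \eqref{23.3.53}.

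\emph{Separated case ($A,B$ singular).} Here $\rank(A),\rank(B)\le1$, and neither can be $0$ (if, say, $A=0$ then $\rank(A \quad B)=\rank(B)\le1$, contradicting \eqref{23.3.24}), so $\rank(A)=\rank(B)=1$. Thus $A$ and $B$ are outer products $A=\binom{c_1}{c_2}\rho_A$, $B=\binom{d_1}{d_2}\rho_B$ with nonzero row vectors $\rho_A,\rho_B\in\bbC^{1\times2}$, and $\rank(A \quad B)=2$ forces $\det\left(\begin{smallmatrix}c_1&d_1\\c_2&d_2\end{smallmatrix}\right)\ne0$ (otherwise the two rows of $(A \quad B)$ are linearly dependent). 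Left-multiplying by the inverse of that matrix, I may assume that $A$ has first row $\rho_A$ and zero second row, and $B$ has zero first row and second row $\rho_B$; the boundary condition has then decoupled into $\rho_A\binom{g(a)}{g^{[1]}(a)}=0$ and $\rho_B\binom{g(b)}{g^{[1]}(b)}=0$. In this normal form $AJA^*$ carries $\rho_AJ\rho_A^*$ in the $(1,1)$ slot and zeros elsewhere, while $BJB^*$ carries only $\rho_BJ\rho_B^*$ in the $(2,2)$ slot, so $AJA^*=BJB^*$ forces $\rho_AJ\rho_A^*=\rho_BJ\rho_B^*=0$. Since $(z_1,z_2)J(z_1,z_2)^*=\bar z_1z_2-z_1\bar z_2=-2i\,\Im(z_1\bar z_2)$, this says that $\rho_A$ and $\rho_B$ are each a complex scalar multiple of a real vector; rescaling to unit length and choosing the sign appropriately gives $\rho_A=(\cos\al,\sin\al)$, $\rho_B=(\cos\be,\sin\be)$ with $\al,\be\in[0,\pi)$, which is precisely \eqref{23.3.52}. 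Finally, $(iii)$ is immediate: the determinant dichotomy shows that every self-adjoint extension is treated by exactly one of the two cases above.
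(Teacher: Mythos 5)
Your proposal is correct, but note that the paper does not actually prove Theorem \ref{23.t3.10}: Section \ref{23.s2} is expressly a review, and this classification is recalled as standard with references to the monographs cited there (e.g., Zettl, Weidmann), so there is no in-paper proof to compare against. Your route --- reduce everything to Theorem \ref{23.t3.9} and then do $2\times2$ linear algebra, splitting according to whether $A,B$ are invertible or singular (the split is exhaustive because $AJA^*=BJB^*$ gives $|\det A|=|\det B|$), normalizing by left multiplication with an invertible $C$, and in the singular case writing $A,B$ as rank-one outer products --- is essentially the standard argument in those references, and all the steps you give check out: the sufficiency computations, the reduction to $N$ with $NJN^*=J$ in the coupled case, and the decoupling plus $\rho J\rho^*=0$ argument in the separated case. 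The only soft spot is the parenthetical sketch of the crux claim that $RJR^*=J$, $\det R=1$ force $R$ real: the specific chain ``$\Im(a)\neq0\Rightarrow c,d$ real $\Rightarrow \bar ab\notin\bbR$'' is not the most direct way through, but the claim is true and follows quickly within your own setup: if $a\neq0$, then $\bar ab\in\bbR$ gives $b=\lambda a$ with $\lambda\in\bbR$, and comparing $a\bar d-b\bar c=1$ with $ad-bc=1$ yields $\overline{d-\lambda c}=d-\lambda c=1/a$, so $a\in\bbR$; the cases $a=0$ (and symmetrically $c=0$) are handled the same way, after which all four entries are forced real. With that small repair spelled out, the proof is complete; the identifications of the Dirichlet/Neumann and periodic/antiperiodic special cases are indeed just substitutions.
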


For interesting extensions of this circle of ideas to other types of boundary conditions, see, for instance, \cite{Ga05} and the references cited therein.

\section{Self-Adjoint Singular Sturm--Liouville Operators} \lb{23.s3}

In this section we first recall the basics of singular Sturm--Liouville operators and add a new wrinkle to the existing theory at the end of this section. Again, the material described up to, and including,  Lemma \ref{23.l2.7} is standard and can be found, for instance, in \cite[Chs.~8, 9]{CL85}, \cite[Sects~13.6, 13.9, 13.0]{DS88}, 
\cite[Ch.~III]{JR76}, \cite[Ch.~V]{Na68}, \cite{NZ92}, \cite[Ch.~6]{Pe88}, \cite[Ch.~9]{Te14}, \cite[Sect.~8.3]{We80}, \cite[Ch.~13]{We03}, \cite[Chs.~4, 6--8]{Ze05}.

Throughout this section we make the following assumptions:

\begin{hypothesis} \lb{23.h2.1}
Let $(a,b) \subseteq \bbR$ and suppose that $p,q,r$ are $($Lebesgue\,$)$ measurable functions on $(a,b)$ 
such that the following items $(i)$--$(iii)$ hold: \\[1mm] 
$(i)$ \hspace*{1.1mm} $r>0$ a.e.~on $(a,b)$, $r\in\Ll$. \\[1mm] 
$(ii)$ \hspace*{.1mm} $p>0$ a.e.~on $(a,b)$, $1/p \in\Ll$. \\[1mm] 
$(iii)$ $q$ is real-valued a.e.~on $(a,b)$, $q\in\Ll$. 
\end{hypothesis}

Given Hypothesis \ref{23.h2.1}, we again study Sturm--Liouville operators associated with the general, 
three-coefficient differential expression $\tau$ in \eqref{23.2.1}. 

\begin{definition} \lb{23.d4.1}
Assume Hypothesis \ref{23.h2.1}. Given $\tau$ as in \eqref{23.2.1}, the \textit{maximal operator} $T_{max}$ in $\Lr$ associated with $\tau$ is defined by
\begin{align}
&T_{max} f = \tau f,    \no
\\
& f \in \dom(T_{max})=\big\{g\in\Lr \, \big| \,g,g^{[1]}\in\ACl;   \lb{23.4.2.1} \\ 
& \hspace*{6.3cm}  \tau g\in\Lr\big\}.   \no
\end{align}
The \textit{minimal operator} $T_{min,0} $ in $\Lr$ associated with $\tau$ is defined by 
\begin{align}
&T_{min,0}  f = \tau f,   \no
\\
&f \in \dom (T_{min,0})=\big\{g\in\Lr \, \big| \, g,g^{[1]}\in\ACl;   \lb{23.4.2.2}
\\
&\hspace*{3.25cm} \supp \, (g)\subset(a,b) \text{ is compact; } \tau g\in\Lr\big\}.   \no
\end{align}

One can prove that $T_{min,0} $ is closable, and one then defines $T_{min}$ 
as the closure of $T_{min,0} $.
\end{definition}

The Green identity and one of its consequences are discussed next.

\begin{lemma} \lb{23.l4.2}
Assume Hypothesis \ref{23.h2.1}, then the following items $(i)$--$(iii)$ hold: \\[1mm] 
$(i)$ Suppose $f, f^{[1]}, g, g^{[1]}\in\ACl$ and $[\al,\be]\subset(a,b)$, then the Lagrange or Green identity reads,
\begin{align}
\int_\al^\be dx\, r(x)\big(\ol{(\tau f)(x)}g(x)-(\tau g)(x)\ol{f(x)}\big) = W(\ol{f},g)(\be) - W(\ol{f},g)(\al). \lb{23.4.4}
\end{align}
$(ii)$ If $f,g\in\dom(T_{max})$, then the limits $W(\ol{f},g)(a)=\lim_{x\downarrow a}W(\ol{f},g)(x)$ and $W(\ol{f},g)(b)=\lim_{x\uparrow b}W(\ol{f},g)(x)$ exist and
\begin{align}
(T_{max} f,g)_{L^2((a,b);rdx)}-(f,T_{max} g)_{L^2((a,b);rdx)} = W(\ol{f},g)(b) - W(\ol{f},g)(a). \lb{23.4.5}
\end{align}
\end{lemma}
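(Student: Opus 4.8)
\textbf{Proof proposal for Lemma \ref{23.l4.2}.}

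The plan is to derive part $(i)$ directly from the definition of the differential expression $\tau$ and then bootstrap to part $(ii)$ via an $L^2$-integrability argument. For $(i)$, I would start from the quasi-derivative form of $\tau$: since $(rf)(\tau f) = -(f^{[1]})' \ol{\cdot}$ plus the potential term, I compute $\ol{(\tau f)}g - (\tau g)\ol{f}$ pointwise on $[\al,\be]$, where the two $q$-terms cancel (using that $q$ is real-valued) and what remains is $r^{-1}\big[-(\ol{f^{[1]}})' g + (g^{[1]})' \ol{f}\,\big]$. Multiplying by $r$ and recognizing the integrand as the derivative of $W(\ol{f},g) = \ol{f}\, g^{[1]} - \ol{f^{[1]}}\, g$ — which is legitimate since $f, f^{[1]}, g, g^{[1]} \in \ACl$ guarantees that the product rule holds a.e.\ and the resulting function is in $\Ll$ on $[\al,\be]$ — the fundamental theorem of calculus for absolutely continuous functions yields \eqref{23.4.4}. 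This part is entirely routine.

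For part $(ii)$, the key observation is that when $f, g \in \dom(T_{max})$, both $r \ol{(\tau f)} g$ and $r (\tau g) \ol{f}$ are in $L^1((a,b);dx)$ by the Cauchy--Schwarz inequality (each factor $\tau f, \tau g$ lies in $\Lr$ and each factor $f, g$ lies in $\Lr$, while the weight splits as $r = r^{1/2} \cdot r^{1/2}$). Hence the left-hand side of \eqref{23.4.4}, viewed as a function of $\al$ and $\be$, has finite limits as $\be \uparrow b$ and $\al \downarrow a$. By \eqref{23.4.4} this forces $W(\ol{f},g)(\be) - W(\ol{f},g)(\al)$ to converge; to conclude that each of $W(\ol{f},g)(b)$ and $W(\ol{f},g)(a)$ exists \emph{separately}, I would fix one endpoint — say fix $\al_0 \in (a,b)$ — so that $\lim_{\be \uparrow b} W(\ol{f},g)(\be)$ exists because $W(\ol{f},g)(\al_0)$ is a constant and the difference converges; symmetrically fix $\be_0$ to get the limit at $a$. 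Passing to the limit in \eqref{23.4.4} and using that $(T_{max}f, g)_{\Lr} = \int_a^b r \ol{(\tau f)} g\, dx$ (an absolutely convergent integral) then gives \eqref{23.4.5}.

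The one point requiring a little care — the ``main obstacle,'' though it is mild — is the separation of the two-sided limit into two one-sided limits: a priori one only knows $W(\ol f, g)(\be) - W(\ol f, g)(\al)$ converges as the pair $(\al,\be) \to (a,b)$, which does not by itself decouple. The fix above (freeze one argument at an interior point and exploit that $W(\ol f,g)$ is continuous, hence finite, there) resolves this cleanly; one should note in passing that continuity of $W(\ol f,g)$ on $(a,b)$ is automatic since $f, g, f^{[1]}, g^{[1]} \in \ACl$. No appeal to deficiency indices or to the limit-point/limit-circle dichotomy is needed at this stage — everything follows from Green's identity plus the $L^1$-bound on the integrand.
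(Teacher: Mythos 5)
Your proof is correct and is precisely the standard argument: part $(i)$ follows from recognizing $r\big(\ol{(\tau f)}g-(\tau g)\ol{f}\big)$ as the a.e.\ derivative of the absolutely continuous function $W(\ol{f},g)=\ol{f}\,g^{[1]}-\ol{f^{[1]}}\,g$ on $[\al,\be]$ and applying the fundamental theorem of calculus, and part $(ii)$ follows from the Cauchy--Schwarz $L^1$-bound on the integrand together with fixing an interior reference point to obtain each one-sided limit of $W(\ol{f},g)$ separately. The paper itself states this lemma without proof, citing the standard monographs, and your argument coincides with the classical one found there, including the correct handling of the only delicate point (decoupling the limits at $a$ and $b$).
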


One can prove the following basic fact:

\begin{theorem} \lb{23.t4.6} 
Assume Hypothesis \ref{23.h2.1}. Then 
\begin{equation} 
(T_{min,0})^* = T_{max}, 
\end{equation} 
and hence $T_{max}$ is closed and $T_{min}=\ol{T_{min,0} }$ is given by
\begin{align}
&T_{min} f = \tau f, \no
\\
&f \in \dom(T_{min})=\big\{g\in\Lr  \, \big| \,  g,g^{[1]}\in\ACl;     \\
& \qquad \text{for all } h\in\dom(T_{max}), \, W(h,g)(a)=0=W(h,g)(b); \, \tau g\in\Lr\big\} \no
\\
& \quad =\big\{g\in\dom(T_{max})  \, \big| \, W(h,g)(a)=0=W(h,g)(b) \, 
\text{for all } h\in\dom(T_{max}) \big\}.   \no 
\end{align}
Moreover, $T_{min,0} $ is essentially self-adjoint if and only if\; $T_{max}$ is symmetric, and then $\ol{T_{min,0} }=T_{min}=T_{max}$.
\end{theorem}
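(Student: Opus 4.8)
\emph{Overview.} The plan is to prove the single identity $(T_{min,0})^*=T_{max}$; every other assertion then follows mechanically. Indeed, once it is known, $T_{max}$ is closed (adjoints are always closed), and $T_{min}=\overline{T_{min,0}}=(T_{min,0})^{**}=(T_{max})^*$ (recall from Definition~\ref{23.d4.1} that $T_{min,0}$ is densely defined and closable), after which the domain description and the essential self-adjointness criterion come out of adjoint calculus plus the Green identity of Lemma~\ref{23.l4.2}. I would establish the identity by two inclusions. The inclusion $T_{max}\subseteq(T_{min,0})^*$ is the easy one: given $g\in\dom(T_{max})$ and $f\in\dom(T_{min,0})$, I pick $[\alpha,\beta]\subset(a,b)$ with $\supp(f)\subset(\alpha,\beta)$ and apply \eqref{23.4.4} on $[\alpha,\beta]$; the Wronskian terms at the endpoints of $[\alpha,\beta]$ vanish, giving $(T_{min,0}f,g)_{\Lr}=(f,\tau g)_{\Lr}$ with $\tau g\in\Lr$, so $g\in\dom((T_{min,0})^*)$ and $(T_{min,0})^*g=\tau g=T_{max}g$.

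\emph{The main inclusion.} For $(T_{min,0})^*\subseteq T_{max}$, let $g\in\dom((T_{min,0})^*)$ and set $h:=(T_{min,0})^*g\in\Lr$, so that $(T_{min,0}f,g)_{\Lr}=(f,h)_{\Lr}$ for all $f\in\dom(T_{min,0})$. I would first fix a real fundamental system $\theta_1,\theta_2$ of $\tau u=0$ on $(a,b)$, normalized by $W(\theta_1,\theta_2)\equiv1$ (such solutions exist with $\theta_j,\theta_j^{[1]}\in\ACl$, since the coefficients satisfy the regular hypotheses on every compact subinterval of $(a,b)$), and build the variation-of-parameters solution of $\tau u=h$,
\[
u_0(x)=\theta_1(x)\int_c^x\theta_2(t)h(t)r(t)\,dt-\theta_2(x)\int_c^x\theta_1(t)h(t)r(t)\,dt,\qquad c\in(a,b),
\]
which satisfies $u_0,u_0^{[1]}\in\ACl$ and $\tau u_0=h$ a.e.\ on $(a,b)$. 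Applying \eqref{23.4.4} once more to $f\in\dom(T_{min,0})$ and $u_0$ (again the boundary terms die) yields $(T_{min,0}f,u_0)_{\Lr}=(f,h)_{\Lr}$, hence $\int_a^b\overline{(T_{min,0}f)}\,w\,r\,dx=0$ for all $f\in\dom(T_{min,0})$, where $w:=g-u_0$; note $w\in L^2([\alpha,\beta];r\,dx)$ on every $[\alpha,\beta]\subset(a,b)$ because $g\in\Lr$ and $u_0$ is locally bounded. The crux is then a du Bois--Reymond type argument identifying the range of $T_{min,0}$: for $\psi\in\Lr$ supported in $[\alpha,\beta]\subset(a,b)$, the function $f_\psi(x):=\theta_1(x)\int_a^x\theta_2\psi r-\theta_2(x)\int_a^x\theta_1\psi r$ belongs to $\dom(T_{min,0})$ with $\tau f_\psi=\psi$ \emph{exactly when} $\int_a^b\theta_j\psi r\,dx=0$ for $j=1,2$ (these two conditions make $f_\psi$ vanish to the right of $\supp(\psi)$, and it vanishes to the left automatically). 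Plugging such $f_\psi$ into the relation above gives $(\psi,w)_{L^2([\alpha,\beta];r\,dx)}=0$ for every $\psi$ orthogonal to $\theta_1,\theta_2$ in $L^2([\alpha,\beta];r\,dx)$; since $\mathrm{span}\{\theta_1,\theta_2\}$ is finite-dimensional, hence closed, this forces $w=c_1\theta_1+c_2\theta_2$ a.e.\ on $[\alpha,\beta]$, and letting $[\alpha,\beta]\uparrow(a,b)$ (with $c_1,c_2$ forced to agree on overlaps by linear independence) shows $w$ solves $\tau u=0$ on $(a,b)$. Consequently $g=u_0+w$ has $g,g^{[1]}\in\ACl$ and $\tau g=h+0=h\in\Lr$, so $g\in\dom(T_{max})$ with $T_{max}g=h=(T_{min,0})^*g$. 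This proves $(T_{min,0})^*=T_{max}$.

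\emph{Domain of $T_{min}$.} With $T_{min}=(T_{max})^*$ established, I would unravel the adjoint. If $g\in\dom((T_{max})^*)$, testing against $f\in\dom(T_{min,0})$ already forces $g\in\dom((T_{min,0})^*)=\dom(T_{max})$ and $(T_{max})^*g=\tau g$; then Lemma~\ref{23.l4.2}(ii), together with the fact that $f\mapsto\overline f$ maps $\dom(T_{max})$ onto itself (the coefficients being real), shows that $g\in\dom((T_{max})^*)$ if and only if $g\in\dom(T_{max})$ and $W(h,g)(b)-W(h,g)(a)=0$ for all $h\in\dom(T_{max})$. To split this into the separate conditions $W(h,g)(a)=0=W(h,g)(b)$, I would, given $h\in\dom(T_{max})$, produce $h_a\in\dom(T_{max})$ agreeing with $h$ near $a$ and vanishing near $b$, obtained by splicing $h$ on $(a,c_1)$ to an element of the \emph{regular} maximal domain on a compact middle interval $[c_1,c_2]$ (which can realize any prescribed boundary data at $c_1$ and $c_2$) and to $0$ on $(c_2,b)$; then $0=W(h_a,g)(b)-W(h_a,g)(a)=-W(h,g)(a)$, and symmetrically $W(h,g)(b)=0$. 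This is precisely the stated description of $\dom(T_{min})$, the two displayed forms being identical since ``$g\in\Lr$, $g,g^{[1]}\in\ACl$, $\tau g\in\Lr$'' means ``$g\in\dom(T_{max})$''.

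\emph{Essential self-adjointness, and the expected obstacle.} Since always $(T_{max})^*=T_{min}=\overline{T_{min,0}}\subseteq T_{max}$, one has $T_{max}$ symmetric $\iff T_{max}\subseteq(T_{max})^*=T_{min}\iff T_{min}=T_{max}\iff\overline{T_{min,0}}=(T_{min,0})^*$, and the last statement is the definition of essential self-adjointness of $T_{min,0}$; moreover in that case $\overline{T_{min,0}}=T_{min}=T_{max}=(T_{max})^*$ is self-adjoint. The one step I expect to be the genuine obstacle is the main inclusion above --- concretely, the variation-of-parameters construction of $u_0$ with the correct $\ACl$-regularity, and above all the du Bois--Reymond identification of $\ran(T_{min,0})$ precise enough to force $g-u_0$ to coincide with a classical solution of $\tau u=0$. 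The rest is bookkeeping with \eqref{23.4.4}--\eqref{23.4.5} and elementary adjoint algebra.
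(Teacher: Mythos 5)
Your proof is correct, but note that the paper itself offers no proof of Theorem \ref{23.t4.6}: it is presented as standard material, with the references listed at the start of Section \ref{23.s3} (Naimark, Weidmann, Teschl, Zettl, etc.) standing in for the argument. What you wrote is precisely the classical proof found there — the easy inclusion via the Green identity \eqref{23.4.4}, and the reverse inclusion via a variation-of-parameters particular solution $u_0$ of $\tau u=h$ together with the du Bois--Reymond step identifying the compactly supported part of $\ran(T_{min,0})$ over $[\alpha,\beta]$ as the orthogonal complement of $\mathrm{span}\{\theta_1,\theta_2\}$ in $L^2([\alpha,\beta];r\,dx)$ — so there is no genuinely different route to compare; your derivation of the domain description and of the essential self-adjointness criterion from the adjoint identity is likewise the standard bookkeeping. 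Two ingredients you invoke without proof deserve a word, though both are standard and recoverable from what the paper does state: (i) density of $\dom(T_{min,0})$ is not actually contained in Definition \ref{23.d4.1} (only closability is asserted there), but it follows by extending by zero elements of the regular minimal domain on compact subintervals $[\alpha,\beta]\subset(a,b)$, which is dense in $L^2([\alpha,\beta];r\,dx)$ by the regular theory recalled in Theorem \ref{23.t3.4}; and (ii) the splicing step used to split $W(h,g)(b)-W(h,g)(a)=0$ into separate conditions at $a$ and $b$ needs the regular-case fact that on $[c_1,c_2]$ one can realize arbitrary prescribed data $\big(u(c_1),u^{[1]}(c_1),u(c_2),u^{[1]}(c_2)\big)$ within the maximal domain (e.g., again by variation of parameters); this is genuinely needed, since multiplying by smooth cutoffs does not preserve $\dom(T_{max})$ when $p$ is merely measurable, as you implicitly recognize. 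With those two standard lemmas supplied, your argument is complete and matches the literature the paper relies on.
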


Regarding self-adjoint extensions of $T_{min}$ one has the following first result.

\begin{theorem} \lb{23.t4.7} 
Assume Hypothesis \ref{23.h2.1}. 
An extension $\wti T$ of $T_{min,0} $ or of $T_{min}=\ol{T_{min,0} }$ is self-adjoint if and only if 
\begin{align}
& \wti T f = \tau f,    \\
& f \in \dom\big(\wti T\big) = \big\{g\in\dom(T_{max})  \, \big| \,  
W(f,g)(a)=W(f,g)(b) \text{ for all } f\in\dom\big(\wti T\big)\big\}.   \no 
\end{align}
\end{theorem}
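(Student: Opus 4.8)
The plan is to read the displayed description of $\dom\big(\wti T\big)$ as the operator identity $\wti T = \wti T^{*}$ rewritten by means of the Green formula, and thereby to reduce the statement to two facts already in hand: $(T_{min,0})^* = T_{max}$ (Theorem~\ref{23.t4.6}) and the abstract Green identity \eqref{23.4.5} (Lemma~\ref{23.l4.2}\,$(ii)$). First I would dispose of the preliminaries. Since $\wti T$ extends the densely defined operator $T_{min,0}$, it is itself densely defined, so $\wti T^{*}$ exists and is closed; moreover, taking adjoints in the inclusion $T_{min,0} \subseteq \wti T$ and invoking Theorem~\ref{23.t4.6} yields $\wti T^{*} \subseteq (T_{min,0})^* = T_{max}$, whence $\dom\big(\wti T^{*}\big) \subseteq \dom(T_{max})$ and $\wti T^{*} h = \tau h$ for $h \in \dom\big(\wti T^{*}\big)$. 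Because a self-adjoint operator is automatically closed, ``extension of $T_{min,0}$'' and ``extension of $T_{min} = \ol{T_{min,0}}$'' are interchangeable for the operators under consideration, so it suffices to treat the case $\wti T \supseteq T_{min,0}$.

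Next I would pin down $\dom\big(\wti T^{*}\big)$ explicitly. Testing the defining relation of $\wti T^{*}$ only against $f \in \dom(T_{min,0}) \subseteq \dom\big(\wti T\big)$ and using $(T_{min,0})^* = T_{max}$ once more forces every $g \in \dom\big(\wti T^{*}\big)$ to satisfy $g \in \dom(T_{max})$ with $\wti T^{*} g = \tau g$; conversely, such a $g$ belongs to $\dom\big(\wti T^{*}\big)$ precisely when $(\wti T f, g)_{\Lr} = (f, \tau g)_{\Lr}$ for all $f \in \dom\big(\wti T\big)$. For $f \in \dom\big(\wti T\big) \subseteq \dom(T_{max})$ and $g \in \dom(T_{max})$, the Green identity \eqref{23.4.5} rewrites this equation as $W(\ol f, g)(b) - W(\ol f, g)(a) = 0$; hence
\[
\dom\big(\wti T^{*}\big) = \big\{g \in \dom(T_{max}) \,\big|\, W(\ol f, g)(a) = W(\ol f, g)(b) \ \text{ for all } f \in \dom\big(\wti T\big)\big\} .
\]
Exactly as in the regular situation recorded in Lemma~\ref{23.l3.8}, one may pass freely between this set and the one obtained by replacing $\ol f$ with $f$. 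Since $\wti T$ and $\wti T^{*}$ both act as the single differential expression $\tau$, one has $\wti T = \wti T^{*}$ if and only if $\dom\big(\wti T\big) = \dom\big(\wti T^{*}\big)$, and substituting the displayed formula for $\dom\big(\wti T^{*}\big)$ gives both directions of the theorem: in the ``only if'' direction, self-adjointness forces $\wti T = \wti T^{*} \subseteq T_{max}$, hence also $\wti T f = \tau f$, while $\dom\big(\wti T\big) = \dom\big(\wti T^{*}\big)$ is the asserted domain identity; in the ``if'' direction, the hypothesized domain identity is verbatim $\dom\big(\wti T\big) = \dom\big(\wti T^{*}\big)$, so $\wti T = \wti T^{*}$ is self-adjoint.

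I do not expect a genuine obstacle here; the entire content is bookkeeping around the identity $\wti T = \wti T^{*}$. The one point demanding care is that the Green identity \eqref{23.4.5} applies only to pairs $f, g$ that both lie in $\dom(T_{max})$, so one must know $\wti T \subseteq T_{max}$ before invoking it with $f \in \dom\big(\wti T\big)$: this holds automatically in the self-adjoint direction through $\wti T = \wti T^{*} \subseteq T_{max}$, and in the converse direction it is built into the standing hypotheses $\dom\big(\wti T\big) \subseteq \dom(T_{max})$ and $\wti T f = \tau f$. A secondary and purely cosmetic subtlety is the self-referential occurrence of $\dom\big(\wti T\big)$ on both sides of the claimed identity, which disappears as soon as that identity is recognized as the assertion $\dom\big(\wti T\big) = \dom\big(\wti T^{*}\big)$.
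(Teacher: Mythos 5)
The paper itself offers no proof of Theorem \ref{23.t4.7}: it is recalled as standard material, and your overall route is precisely the classical argument one finds in the cited monographs. Namely, from $T_{min,0}\subseteq \wti T$ and Theorem \ref{23.t4.6} you get $\wti T^{*}\subseteq (T_{min,0})^{*}=T_{max}$, then the Green identity \eqref{23.4.5} identifies $\dom\big(\wti T^{*}\big)=\big\{g\in\dom(T_{max})\,\big|\,W(\ol f,g)(a)=W(\ol f,g)(b)\text{ for all }f\in\dom\big(\wti T\big)\big\}$, and the theorem is read as the statement $\wti T=\wti T^{*}$. Up to and including that identification your reasoning is correct and complete, and your handling of the interchangeability of ``extension of $T_{min,0}$'' and ``extension of $T_{min}$'' is fine.

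The genuine gap is the sentence claiming one may ``pass freely'' between this set and the one obtained by replacing $\ol f$ with $f$. Since $p,q,r$ are real and $W$ is bilinear, $\ol{W(f,g)}=W(\ol f,\ol g)$, so the unconjugated set is the complex conjugate $\ol{\dom\big(\wti T^{*}\big)}=\{\ol u\,|\,u\in\dom(\wti T^{*})\}$; it coincides with $\dom\big(\wti T^{*}\big)$ only when that domain is invariant under complex conjugation, which fails for coupled boundary conditions with a nonreal phase. Concretely, take $\tau=-d^{2}/dx^{2}$ on $(0,1)$ and the self-adjoint extension with $g(1)=ig(0)$, $g^{[1]}(1)=ig^{[1]}(0)$ (Theorem \ref{23.t3.10}\,$(ii)$ with $\varphi=\pi/2$, $R=I_{2}$): the functions $f(x)=e^{i\pi x/2}$ and $g(x)=e^{5i\pi x/2}$ both lie in the domain, yet $W(f,g)(x)=2\pi i e^{3i\pi x}$, so $W(f,g)(0)=2\pi i\neq -2\pi i=W(f,g)(1)$, whereas $W(\ol f,g)(0)=W(\ol f,g)(1)$ as self-adjointness demands; conversely, the restriction of $T_{max}$ to $\{g\,|\,g(1)=g(0)+\alpha g^{[1]}(0),\ g^{[1]}(1)=g^{[1]}(0)\}$ with $\Im\alpha\neq0$ satisfies the unconjugated fixed-point description but is not even symmetric. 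Thus the unconjugated condition is neither necessary nor sufficient in general, and the appeal to the second equality in Lemma \ref{23.l3.8} inherits exactly the same caveat. What your argument actually proves is the theorem with $W(\ol f,g)$ in the domain description --- which is the form the Green identity produces and the form in which the statement must be understood (the two versions do agree for separated boundary conditions and for real coupled ones, where the domain is conjugation invariant) --- but the final conjugation-dropping step cannot be justified as stated.
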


The celebrated Weyl alternative then can be stated as follows:

\begin{theorem}[Weyl's Alternative] \lb{t4.9} ${}$ \\
Assume Hypothesis \ref{23.h2.1}. Then the following alternative holds: \\[1mm] 
$(i)$ For every $z\in\bbC$, all solutions $u$ of $(\tau-z)u=0$ are in $\Lr$ near $b$ 
$($resp., near $a$$)$. \\[1mm] 
$(ii)$  For every $z\in\bbC$, there exists at least one solution $u$ of $(\tau-z)u=0$ which is not in $\Lr$ near $b$ $($resp., near $a$$)$. In this case, for each $z\in\bbC\bs\bbR$, there exists precisely one solution $u_b$ $($resp., $u_a$$)$ of $(\tau-z)u=0$ $($up to constant multiples$)$ which lies in $\Lr$ near $b$ $($resp., near $a$$)$. 
\end{theorem}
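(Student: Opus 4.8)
The plan is to prove Weyl's Alternative by a standard limit-point/limit-circle dichotomy argument, working near the endpoint $b$ (the case near $a$ being symmetric). First I would fix $z_0\in\bbC\setminus\bbR$ and choose a fundamental system $\phi(z_0,\cdot),\psi(z_0,\cdot)$ of solutions of $(\tau - z_0)u = 0$ on $(a,b)$, normalized so that $W(\phi,\psi)\equiv 1$; existence and uniqueness of such solutions (with the appropriate local absolute continuity of $u,u^{[1]}$) follows from the standard existence-uniqueness theory for the first-order system equivalent to $\tau u = z_0 u$ under Hypothesis \ref{23.h2.1}. Pick any point $c\in(a,b)$, and for each $\beta\in(c,b)$ consider the solution $u_\beta(z_0,\cdot) = \phi(z_0,\cdot) + m_\beta \psi(z_0,\cdot)$ satisfying a real (self-adjoint) boundary condition at $\beta$; the parameter $m_\beta$ is then a Möbius (linear fractional) function of the boundary-condition parameter, so as that parameter runs over its circle, $m_\beta$ traces a circle $C_\beta$ in $\bbC$. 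The classical computation using the Green identity \eqref{23.4.4} on $[c,\beta]$ shows that $m\in C_\beta$ iff $\int_c^\beta |u_\beta(z_0,x)|^2 r(x)\,dx = \Im(m)/\Im(z_0)$, and that the disks bounded by these circles are nested: $\beta_1 < \beta_2$ implies $\overline{D_{\beta_2}}\subset \overline{D_{\beta_1}}$.

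The second step is to pass to the limit $\beta\uparrow b$. By nestedness the disks $\overline{D_\beta}$ shrink to a nonempty limiting set which is either a disk of positive radius (``limit circle'') or a single point (``limit point''). In the limit-point case there is exactly one value $m_b(z_0)$ with $u_b(z_0,\cdot) := \phi(z_0,\cdot) + m_b(z_0)\psi(z_0,\cdot)\in L^2$ near $b$, while $\psi$ and hence every linearly independent solution fails to be $L^2$ near $b$; this is alternative $(ii)$ at $z = z_0$. In the limit-circle case, every $m$ in the limiting disk gives an $L^2$ solution, and since any two distinct such solutions are linearly independent, it follows that \emph{all} solutions of $(\tau-z_0)u=0$ are $L^2$ near $b$; this is alternative $(i)$ at $z = z_0$.

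The third step is to remove the restriction $z\notin\bbR$ and upgrade to ``for every $z\in\bbC$.'' Here I would use the variation-of-parameters representation: if $u$ solves $(\tau - z_1)u = 0$ then, writing $z_1 = z_0 + (z_1 - z_0)$, $u$ solves the inhomogeneous equation $(\tau - z_0)u = (z_1 - z_0)u$, so $u$ can be expressed via the fundamental system at $z_0$ and an integral involving $u$ itself. If both solutions at $z_0$ are $L^2$ near $b$, a Gronwall-type estimate on this integral equation (using that $r\in\Ll$, so $\int_c^b r\,dx < \infty$) shows the same for solutions at $z_1$; hence the limit-circle case, once it holds for one $z_0$, holds for all $z\in\bbC$. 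Conversely the limit-point case propagates too. Finally, uniqueness ``up to constant multiples'' of the $L^2$-near-$b$ solution for $z\in\bbC\setminus\bbR$ in case $(ii)$ follows because two linearly independent $L^2$ solutions would force the limit-circle case, a contradiction.

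The main obstacle I expect is the careful bookkeeping in the nested-disk construction — correctly identifying the Weyl circle $C_\beta$ as the image of the real boundary-parameter circle under the Möbius map determined by $W(\phi,\psi) = 1$ and the Green identity, getting the radius formula and the nesting inequality right, and handling the quasi-derivative $u^{[1]} = pu'$ rather than $u'$ throughout. The analytic-in-$z$ propagation via the integral equation is routine given $r\in\Ll$, but one must be slightly careful that the estimates are uniform on compact subintervals approaching $b$ so that the $L^2$ conclusion genuinely transfers.
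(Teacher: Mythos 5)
You should first note that the paper does not prove Theorem \ref{t4.9} at all: it is quoted as standard, with the proof delegated to the monographs cited at the beginning of Section \ref{23.s3} (e.g.\ Coddington--Levinson, Weidmann, Teschl, Zettl). Your argument is exactly the classical proof given in those sources: the nested Weyl disks at a fixed $z_0\in\bbC\setminus\bbR$ obtained from the Green identity \eqref{23.4.4}, the dichotomy limit point/limit circle as the disks shrink, and the invariance of the limit-circle property under change of the spectral parameter via a variation-of-parameters estimate, followed by the uniqueness-by-contradiction argument for the $L^2$ solution at nonreal $z$ in case $(ii)$. So in terms of route there is nothing genuinely different to compare; you have reproduced the standard proof the paper implicitly relies on, and its overall structure is sound.

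One justification in your third step is, however, wrong as stated: Hypothesis \ref{23.h2.1} only gives $r\in\Ll$, i.e.\ local integrability, and this does \emph{not} imply $\int_c^b r(x)\,dx<\infty$ (take $b=\infty$ and $r\equiv 1$, as in the Bessel example of Section \ref{23.s6}). The Gronwall/bootstrap for the integral equation
\begin{equation}
u(x)=\alpha\,\phi(z_0,x)+\beta\,\psi(z_0,x)+(z_1-z_0)\int_c^x r(t)\,dt\,
\big[\phi(z_0,x)\psi(z_0,t)-\psi(z_0,x)\phi(z_0,t)\big]u(t)
\end{equation}
must instead draw its smallness from the quantity $|z_1-z_0|\,\|\phi(z_0,\cdot)\|_{L^2((c,b);rdx)}\,\|\psi(z_0,\cdot)\|_{L^2((c,b);rdx)}$, which can be made less than, say, $1/2$ by moving $c$ close to $b$; this is available precisely because in the limit-circle case at $z_0$ both $\phi(z_0,\cdot)$ and $\psi(z_0,\cdot)$ lie in $L^2((c,b);rdx)$, and Cauchy--Schwarz then closes the estimate uniformly in $x\in(c,b)$. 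Two smaller points you should make explicit: the Green-identity computation identifying the circle $C_\beta$ and the formula $\int_c^\beta|u|^2r\,dx=\Im(m)/\Im(z_0)$ requires the fundamental system to be normalized with \emph{real} initial data at $c$ (the condition $W(\phi,\psi)=1$ alone is not enough), and in case $(ii)$ the existence of at least one $L^2$ solution near $b$ for \emph{each} $z\in\bbC\setminus\bbR$ is obtained by running the disk construction at that $z$ (the limit point always lies in every disk), with uniqueness up to multiples then following from your contradiction argument.
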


This yields the limit circle/limit point classification of $\tau$ at an interval endpoint as follows. 

\begin{definition} \lb{23.d4.10} 
Assume Hypothesis \ref{23.h2.1}. \\[1mm]  
In case $(i)$ in Theorem \ref{t4.9}, $\tau$ is said to be in the \textit{limit circle case} at $b$ $($resp., at $a$$)$. $($Frequently, $\tau$ is then called \textit{quasi-regular} at $b$ $($resp., $a$$)$.$)$
\\[1mm] 
In case $(ii)$ in Theorem \ref{t4.9}, $\tau$ is said to be in the \textit{limit point case} at $b$ $($resp., at $a$$)$. \\[1mm]
If $\tau$ is in the limit circle case at $a$ and $b$ then $\tau$ is also called \textit{quasi-regular} on $(a,b)$. 
\end{definition}

The next result links self-adjointness of $T_{min}$ (resp., $T_{max}$) and the limit point property of $\tau$ at both endpoints:

\begin{theorem} \lb{23.t4.11}
Assume Hypothesis~\ref{23.h2.1}, then the following items $(i)$ and $(ii)$ hold: \\[1mm] 
$(i)$ If $\tau$ is in the limit point case at $a$ $($resp., $b$$)$, then 
\begin{equation} 
W(f,g)(a)=0 \, \text{$($resp., $W(f,g)(b)=0$$)$ for all $f,g\in\dom(T_{max})$.} 
\end{equation} 
$(ii)$ Let $T_{min}=\ol{T_{min,0} }$. Then
\begin{align}
\begin{split}
n_\pm(T_{min}) &= \dim(\ker(T_{max} \mp i I))    \\
& = \begin{cases}
2 & \text{if $\tau$ is in the limit circle case at $a$ and $b$,}\\
1 & \text{if $\tau$ is in the limit circle case at $a$} \\
& \text{and in the limit point case at $b$, or vice versa,}\\
0 & \text{if $\tau$ is in the limit point case at $a$ and $b$}.
\end{cases}
\end{split} 
\end{align}
In particular, $T_{min} = T_{max}$ is self-adjoint if and only if $\tau$ is in the limit point case at $a$ and $b$. 
\end{theorem}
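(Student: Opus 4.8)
\emph{Outline of the proof.} I would establish part (i) first, since it enters part (ii), and then deduce part (ii) from von Neumann's theory together with Weyl's alternative.

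\emph{Part (ii).} The equality $n_\pm(T_{min})=\dim\ker(T_{max}\mp iI)$ is immediate: $T_{min}=\ol{T_{min,0}}$ is densely defined, closed and symmetric, hence $n_\pm(T_{min})=\dim\ker(T_{min}^*\mp iI)$, and $T_{min}^*=(T_{min,0})^*=T_{max}$ by Theorem~\ref{23.t4.6}. For the dimension, observe that $\ker(T_{max}\mp iI)$ is the space of solutions of $(\tau\mp i)u=0$ lying in $\Lr$, that complex conjugation interchanges $\ker(T_{max}-iI)$ and $\ker(T_{max}+iI)$ (the coefficients being real), and that the solution space of $(\tau-i)u=0$ is two-dimensional, a solution belonging to $\Lr$ exactly when it is square-integrable near $a$ and near $b$. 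By Theorem~\ref{t4.9}: in the limit circle/limit circle case every solution is square-integrable near each endpoint, giving $\dim\ker(T_{max}-iI)=2$; in a mixed case only the one-dimensional space of solutions square-integrable near the limit point endpoint survives, giving $\dim\ker(T_{max}-iI)=1$; and in the limit point/limit point case $\dim\ker(T_{max}-iI)\le1$, while a nonzero such $u$ would lie in $\dom(T_{max})$ with $T_{max}u=iu$, so that by part (i) and Lemma~\ref{23.l4.2}(ii),
\begin{equation*}
-2i\,\|u\|_{\Lr}^2=(T_{max}u,u)_{\Lr}-(u,T_{max}u)_{\Lr}=W(\ol u,u)(b)-W(\ol u,u)(a)=0,
\end{equation*}
whence $u=0$ and $\dim\ker(T_{max}-iI)=0$. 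Since a densely defined closed symmetric operator is self-adjoint exactly when both deficiency indices vanish, $T_{min}=T_{max}$ holds precisely in the limit point/limit point case, and then $T_{min}=T_{min}^*=T_{max}$.

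\emph{Part (i).} It suffices, by symmetry, to treat the endpoint $a$. Fix $c\in(a,b)$ and pass to the interval $(a,c)$, with maximal and minimal operators $T_{max}^{(a,c)}$, $T_{min}^{(a,c)}$; restrictions of $f,g\in\dom(T_{max})$ lie in $\dom(T_{max}^{(a,c)})$, and $W(\ol f,g)(a)$ depends only on their germ at $a$. Since $\tau$ is limit point at $a$ while all solutions of $(\tau\mp i)u=0$ are square-integrable near the interior point $c$, Theorem~\ref{t4.9} gives $\dim\ker(T_{max}^{(a,c)}\mp iI)=1$, hence $\dim\big(\dom(T_{max}^{(a,c)})/\dom(T_{min}^{(a,c)})\big)=n_++n_-=2$. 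The evaluation map $g\mapsto(g(c),g^{[1]}(c))$ on $\dom(T_{max}^{(a,c)})$ is surjective onto $\bbC^2$ — indeed already by functions vanishing near $a$ — and its kernel contains $\dom(T_{min}^{(a,c)})$ by Theorem~\ref{23.t4.6} applied on $(a,c)$, so it descends to an isomorphism of the two-dimensional quotient onto $\bbC^2$; therefore its kernel equals $\dom(T_{min}^{(a,c)})$. Picking $v_1,v_2\in\dom(T_{max}^{(a,c)})$ that vanish near $a$ with $(v_j(c),v_j^{[1]}(c))$ the standard basis vectors, any $g\in\dom(T_{max}^{(a,c)})$ satisfies $g-g(c)v_1-g^{[1]}(c)v_2\in\dom(T_{min}^{(a,c)})$, and likewise for $f$. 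As $v_1,v_2\equiv0$ near $a$, one has, near $a$, $f=f_1:=f-f(c)v_1-f^{[1]}(c)v_2$ and $g=g_1:=g-g(c)v_1-g^{[1]}(c)v_2$, so $W(\ol f,g)(a)=W(\ol{f_1},g_1)(a)$; since $g_1\in\dom(T_{min}^{(a,c)})$, Theorem~\ref{23.t4.6} yields $W(h,g_1)(a)=0$ for every $h\in\dom(T_{max}^{(a,c)})$, in particular for $h=\ol{f_1}$, so $W(\ol f,g)(a)=0$. Replacing $f$ by $\ol f$ gives $W(f,g)(a)=0$, as asserted.

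I expect the one point demanding care to be the surjectivity just invoked: one cannot multiply a solution of $\tau u=0$ by a cutoff (the quasi-derivative structure and the possibly unbounded $1/r$ would eject the result from $\dom(T_{max}^{(a,c)})$); rather, one prescribes the data at $c$, solves $(\tau-i)g=k$ near $c$ for a bounded $k$ chosen, by a one-dimensional controllability argument, so that $g$ and $g^{[1]}$ vanish at some interior point, and extends by zero — or, equivalently, applies the resolvents of two self-adjoint extensions of $T_{min}^{(a,c)}$ to bump functions supported near $c$. Beyond this, only the routine transfer of Weyl's alternative, Theorem~\ref{23.t4.6} and von Neumann's formula to the subinterval $(a,c)$ is needed; everything else is elementary.
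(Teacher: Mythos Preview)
The paper does not supply its own proof of Theorem~\ref{23.t4.11}: at the outset of Section~\ref{23.s3} it explicitly declares that ``the material described up to, and including, Lemma~\ref{23.l2.7} is standard'' and refers to the monographs \cite{CL85}, \cite{DS88}, \cite{JR76}, \cite{Na68}, \cite{NZ92}, \cite{Pe88}, \cite{Te14}, \cite{We80}, \cite{We03}, \cite{Ze05}. So there is nothing to compare against at the level of argument; your proposal is a self-contained proof where the paper simply cites the literature.

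That said, your argument is the standard one and is correct. The logic is sound and noncircular: you first establish part~(i) by passing to the subinterval $(a,c)$, where the deficiency-index count $n_\pm=1$ follows directly from Weyl's alternative (Theorem~\ref{t4.9}) without invoking part~(i) itself; then you use part~(i) only for the limit-point/limit-point subcase of part~(ii). The reduction in part~(i)---showing that the two-dimensional quotient $\dom(T_{max}^{(a,c)})/\dom(T_{min}^{(a,c)})$ is exhausted by the boundary data at the regular endpoint $c$, so that every element of $\dom(T_{max}^{(a,c)})$ agrees near $a$ with an element of $\dom(T_{min}^{(a,c)})$---is exactly the mechanism used in the textbook proofs (e.g.\ Weidmann \cite{We80}, Naimark \cite{Na68}). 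Your caveat about the surjectivity of $g\mapsto(g(c),g^{[1]}(c))$ is well placed: the clean way to obtain it under Hypothesis~\ref{23.h2.1} is the Naimark-type patching lemma (given $c'<c$ and prescribed data at $c'$ and $c$, there exists $g\in\dom(T_{max})$ supported in $[c',c]$ realizing them), which is proved via the variation-of-constants formula \eqref{23.2.4.11b}--\eqref{23.2.4.11c} with a suitable $f\in L^2((c',c);r\,dx)$; your sketch captures this.
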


We continue with a discussion of self-adjoint extensions in the quasi-regular case, that is, the case where $\tau$ is in the limit circle case at $a$ and $b$.

\begin{theorem} \lb{23.t4.12}
Assume Hypothesis~\ref{23.h2.1}, suppose that $\tau$ is in the limit circle case at $a$ and $b$ $($i.e., $\tau$ is 
quasi-regular on $(a,b)$$)$, and let $\la\in\bbR$. 
Then $T$ is a self-adjoint extension of $T_{min}$ if and only if there exist functions 
$v,w:(a,b)\to\bbC$ such that \\[1mm] 
$(\alpha)$ $v,w$ are solutions of $(\tau-\la)u=0$ in a neighborhood of $a$ and $b$. 
\\[1mm] 
$(\beta)$ $v,w$ are linearly independent$\mod(\dom(T_{min}))$ $($i.e., no 
nontrivial linear combination of $v$ and $w$ vanishes simultaneously near $a$ and $b$$)$. \\[1mm] 
$(\gamma)$ One has 
\begin{align} 
& W(\ol{v},w)\vl_a^b=W(\ol{v},v)\vl_a^b=W(\ol{w},w)\vl_a^b=0, \\ 
& \, \dom(T)=\Big\{g\in\dom(T_{max})  \, \Big| \,  W(v,g)\vl_a^b=W(w,g)\vl_a^b=0\Big\}. 
\end{align} 
Instead of $v,w$, one can also use any functions $h, k$ which are linearly 
independent $\mod(\dom(T_{min}))$ $($i.e., for no nontrivial linear combination $u=c_1h+c_2 k$ one 
has $W(u,g)\vl_a^b=0$ for all $g\in\dom(T_{max})$$)$ as long as 
$W(\ol{h}, k)\vl_a^b=W(\ol{h}, h)\vl_a^b=W(\ol{k}, k)\vl_a^b=0$. 
\end{theorem}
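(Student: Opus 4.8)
The plan is to reduce the statement to finite-dimensional linear algebra on the \emph{boundary space} $\cD := \dom(T_{max})/\dom(T_{min})$. By Theorem~\ref{23.t4.11} and von Neumann's extension theory, $\dim(\cD) = n_+(T_{min}) + n_-(T_{min}) = 2+2 = 4$, and on $\cD$ the boundary form
\[
\langle [f],[g]\rangle := W(\ol f,g)\vl_a^b, \qquad f,g\in\dom(T_{max}),
\]
is well defined (Lemma~\ref{23.l4.2}\,$(ii)$), nondegenerate (by the description of $\dom(T_{min})$ in Theorem~\ref{23.t4.6}), and skew-Hermitian, $\langle[f],[g]\rangle = -\ol{\langle[g],[f]\rangle}$, by antisymmetry of the Wronskian. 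Since $\tau$ has real coefficients, complex conjugation $C\colon g\mapsto\ol g$ descends to a conjugate-linear involution of $\cD$ with $\langle C[f],C[g]\rangle = \ol{\langle[f],[g]\rangle}$, so $C$ maps isotropic subspaces to isotropic subspaces; and, by nondegeneracy, any $2$-dimensional isotropic subspace of $\langle\dott,\dott\rangle$ equals its own annihilator, i.e.\ is \emph{Lagrangian}. Throughout I would use the standard fact (von Neumann; cf.\ Theorems~\ref{23.t4.7} and~\ref{23.t4.11}) that $T$ is a self-adjoint extension of $T_{min}$ if and only if $\cN_T := \dom(T)/\dom(T_{min})$ is a $2$-dimensional isotropic, i.e.\ Lagrangian, subspace of $(\cD,\langle\dott,\dott\rangle)$.

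Next I would record two facts that use that $\tau$ is in the limit circle case at $a$ and $b$. First, every solution of $(\tau-\la)u=0$ on a one-sided neighborhood of $a$ (resp.\ $b$) lies in $\Lr$ near that endpoint, so, by a standard gluing argument (using that $\tau$ is regular on every compact subinterval of $(a,b)$), it extends to an element of $\dom(T_{max})$ agreeing with it near $a$ (resp.\ $b$) and vanishing near the opposite endpoint; hence for any $v\colon(a,b)\to\bbC$ solving $(\tau-\la)u=0$ near $a$ and near $b$ there is $\ti v\in\dom(T_{max})$ with the same germs at $a$ and $b$, and then $W(v,g)\vl_a^b = W(\ti v,g)\vl_a^b$ for all $g\in\dom(T_{max})$ while $[\ti v]\in\cD$ depends only on those germs. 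Second, pairing such glued solutions against one another and against elements of $\dom(T_{max})$ (and using that the Wronskian of two linearly independent solutions is nowhere zero) shows that if $\vp_1^a,\vp_2^a$ and $\vp_1^b,\vp_2^b$ are linearly independent solutions near $a$ and near $b$, then their glued classes form a basis of $\cD$, that a function solving $(\tau-\la)u=0$ near each endpoint lies in $\dom(T_{min})$ iff it vanishes near $a$ and near $b$, and that every linear combination of the $\vp$'s is again such a near-endpoint solution. In particular, condition $(\beta)$ is exactly linear independence of $[v],[w]$ in $\cD$.

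For the ``if'' direction, assume $(\alpha)$--$(\gamma)$. By the first fact above I may replace $v,w$ by elements of $\dom(T_{max})$ with the same endpoint germs, which affects none of the Wronskians in $(\gamma)$ nor the set in its second line; so assume $v,w\in\dom(T_{max})$ and set $\cM := \mathrm{span}\{[v],[w]\}$. Then $(\beta)$ gives $\dim\cM = \dim C\cM = 2$, and the first line of $(\gamma)$ says $\langle[v],[v]\rangle = \langle[v],[w]\rangle = \langle[w],[w]\rangle = 0$, hence also $\langle[w],[v]\rangle = 0$ by skew-Hermiticity, so $\cM$, and therefore $C\cM$, is isotropic, hence Lagrangian. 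Since $W(v,g)\vl_a^b = \langle C[v],[g]\rangle$ and likewise for $w$, the second line of $(\gamma)$ says precisely that $\cN_T$ is the annihilator $(C\cM)^{\perp}$, which equals $C\cM$ because $C\cM$ is Lagrangian; thus $\cN_T$ is Lagrangian and $T$ is self-adjoint, while $T\supseteq T_{min}$ since $[g]=0$ for $g\in\dom(T_{min})$.

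For the ``only if'' direction, let $T$ be a self-adjoint extension of $T_{min}$, so $\cN_T$ is Lagrangian, and put $\cM := C\cN_T$, again Lagrangian. Using the second fact above, pick $v,w$ among linear combinations of glued near-endpoint solutions so that $[v],[w]$ is a basis of $\cM$; then $(\alpha)$ holds by construction, $(\beta)$ because $[v],[w]$ are linearly independent in $\cD$, the first line of $(\gamma)$ because $\cM$ is isotropic, and, using $W(v,g)\vl_a^b=\langle C[v],[g]\rangle$ once more, the set in the second line of $(\gamma)$ equals, modulo $\dom(T_{min})$, $(C\cM)^{\perp}=\cN_T^{\perp}=\cN_T$ (since $C\cM=C^2\cN_T=\cN_T$ is Lagrangian), i.e.\ it is $\dom(T)$ itself. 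The closing assertion about general $h,k$ is the same argument: the ``if'' direction used nothing about $v,w$ beyond lying (after gluing) in $\dom(T_{max})$, and the hypothesis there is exactly that $[h],[k]$ are linearly independent in $\cD$ and $\mathrm{span}\{[h],[k]\}$ is isotropic. I expect the one genuinely delicate point to be the gluing step---realizing solutions near a limit circle endpoint as honest elements of $\dom(T_{max})$, which is precisely where square-integrability of all solutions near $a$ and $b$ enters---together with the bookkeeping turning the \emph{unconjugated} conditions $W(v,g)\vl_a^b=0$ of the domain into the sesquilinear boundary form via $W(v,g)\vl_a^b=\langle C[v],[g]\rangle$; everything else is Lagrangian-subspace linear algebra and elementary Wronskian identities.
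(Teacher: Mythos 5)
Your proposal is correct, but note that the paper itself does not prove Theorem \ref{23.t4.12}: it is recalled as standard material, with the proof delegated to the cited monographs (e.g., \cite{We80}, \cite{We03}, \cite{Ze05}), so there is no in-paper argument to compare against. What you write is, in substance, the standard proof from those sources, repackaged cleanly in terms of the boundary form on the four-dimensional quotient $\dom(T_{max})/\dom(T_{min})$ and its Lagrangian planes: the dictionary $W(v,g)\vl_a^b=\langle C[v],[g]\rangle$, the dimension count $n_\pm(T_{min})=2$ from Theorem \ref{23.t4.11}, the identification of self-adjoint extensions with maximal isotropic subspaces (Theorem \ref{23.t4.7} in quotient language), and the patching of endpoint solution germs into $\dom(T_{max})$ are exactly the ingredients of the textbook treatments; your use of the conjugation $C$ correctly reconciles the unconjugated Wronskians appearing in the domain description with the sesquilinear Green form. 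Two points should be made explicit in a full write-up. First, nondegeneracy of the form on the quotient does not follow verbatim from Theorem \ref{23.t4.6}, which involves the separate vanishing of $W(h,\dott)(a)$ and $W(h,\dott)(b)$, whereas the form only sees the difference $\vl_a^b$; one needs the same localization you use later (testing against elements of $\dom(T_{max})$ that vanish near one endpoint) to separate the two endpoints. Second, the gluing step cannot be done with a bare smooth cutoff: since $p$ is merely measurable, $\chi' p\hspace{0.5pt} u$ need not yield an absolutely continuous quasi-derivative, so one must invoke the Naimark-type patching lemma, i.e., on a compact subinterval (where $\tau$ is regular) construct a function in the maximal domain with prescribed values of $g$ and $g^{[1]}$ at the two interior matching points; your phrase ``standard gluing argument using regularity on compact subintervals'' points to exactly this, and you rightly flag it as the delicate analytic input. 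With these two routine insertions your argument is complete and coincides with the standard proofs the paper refers to.
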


The next result discusses all self-adjoint extensions of $T_{min}$ with separated boundary conditions. This is no restriction as long as $\tau$ is in the limit point case at one of the endpoints $a$ or $b$.

\begin{theorem} \lb{23.t4.13}
Assume Hypothesis \ref{23.h2.1} and let $\la\in\bbR$. Then $T$ is a self-adjoint extension of $T_{min}$ with separated boundary conditions if and only if there are nontrivial real-valued solutions $v,w$ of $(\tau-\la)u=0$ with
\begin{align}
& T f = \tau f,     \lb{23.4.5.6} \\
& f \in \dom(T)=\{g\in\dom(T_{max})  \, | \,  W(v,g)(a)=0 \text{ if $\tau$ is in the limit circle case}    \no \\
& \hspace*{3.3cm} \text{ at $a$; $W(w,g)(b)=0$ if $\tau$ is in the limit circle case at $b$}\}   \no 
\end{align}
$($with a boundary condition omitted whenever $\tau$ is in the limit point case at $a$ and/or $b$$)$. We 
denote such a self-adjoint extension $T$ by $T_{v,w}$ $($and we drop the subscript $v$ or $w$ if 
$\tau$ is in the limit point case at $a$ or $b$$)$. 
\end{theorem}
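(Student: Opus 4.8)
The plan is to reduce the separated case to the two-endpoint description already available in Theorem \ref{23.t4.12} together with the von Neumann deficiency-index count in Theorem \ref{23.t4.11}, handling the four possible limit-circle/limit-point configurations at $a$ and $b$ one at a time. First I would treat the generic case in which $\tau$ is in the limit circle case at both $a$ and $b$. Here I would start from a nontrivial real-valued solution $v$ of $(\tau-\la)u=0$ near $a$ and a nontrivial real-valued solution $w$ of $(\tau-\la)u=0$ near $b$ (such solutions exist because $\la\in\bbR$ and the coefficients are real), extend them arbitrarily to functions on all of $(a,b)$ lying in $\dom(T_{max})$ (possible since $\tau$ is quasi-regular), and verify the hypotheses $(\alpha)$–$(\gamma)$ of Theorem \ref{23.t4.12}: linear independence $\mod(\dom(T_{min}))$ follows because $v$ is nontrivial near $a$ while $w$ vanishes near $a$ (after the extension one must be slightly careful, but the near-$a$/near-$b$ behaviour forces any vanishing linear combination to be trivial), and the Wronskian conditions $W(\ol v,v)|_a^b = W(\ol w,w)|_a^b = W(\ol v,w)|_a^b = 0$ hold because $v,w$ are real-valued, because $v$ solves the equation near $a$ (so $W(v,v)(a)=0$ automatically and $W(v,v)(b)$ can be arranged, or one simply invokes the ``$h,k$'' freedom at the end of Theorem \ref{23.t4.12}), and because $W(v,w)(a)=0$ since $w$ and all $g\in\dom(T_{max})$ have vanishing Wronskian with the limit-circle solution $w$ at $b$ only — here I would lean on the fact, implicit in Theorem \ref{23.t4.12}'s final sentence, that one may replace $v,w$ by any pair of functions that are real near the respective endpoints and whose mutual and self-Wronskians vanish at $a$ and $b$. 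The domain characterization $\dom(T)=\{g\in\dom(T_{max}) : W(v,g)|_a^b = W(w,g)|_a^b = 0\}$ then collapses to $W(v,g)(a)=0$ and $W(w,g)(b)=0$ because $v$ is supported (as far as the boundary form is concerned) at $a$ and $w$ at $b$: $W(v,g)(b)$ can be made to vanish for all $g$ by choosing the extension of $v$ to vanish near $b$, and likewise $W(w,g)(a)=0$.

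Next I would dispatch the mixed cases. Suppose $\tau$ is limit circle at $a$ and limit point at $b$. By Theorem \ref{23.t4.11}(i), $W(f,g)(b)=0$ for all $f,g\in\dom(T_{max})$, so the boundary condition at $b$ is vacuous, $n_\pm(T_{min})=1$, and every self-adjoint extension is a one-parameter restriction of $T_{max}$. I would show directly that for a nontrivial real solution $v$ of $(\tau-\la)u=0$ near $a$, the set $\{g\in\dom(T_{max}) : W(v,g)(a)=0\}$ is the domain of a self-adjoint operator: symmetry follows from Green's identity \eqref{23.4.5} since $W(\ol g,h)(a)=0$ whenever $W(v,g)(a)=W(v,h)(a)=0$ (two vectors in the two-dimensional boundary space at $a$ that are both ``orthogonal'' to the same real vector $(v(a),v^{[1]}(a))$ in the symplectic form are proportional, forcing the boundary form to vanish), and maximality/self-adjointness then follows because this is a genuine restriction strictly between $T_{min}$ and $T_{max}$ with the right deficiency count. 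Conversely, any separated self-adjoint extension restricts $\dom(T_{max})$ by a single real homogeneous condition on $(g(a),g^{[1]}(a))$ — here ``separated'' is exactly the statement that no condition linking $a$ and $b$ is imposed — and such a condition is $W(v,g)(a)=0$ for a suitable nontrivial real solution $v$, obtained by choosing $v$ with the prescribed boundary data at $a$. The case limit point at $a$, limit circle at $b$ is symmetric, and the case limit point at both endpoints is Theorem \ref{23.t4.11}(ii): $T_{min}=T_{max}$ is already self-adjoint, $\dom(T_{max})$ is unconstrained, and the statement holds with both boundary conditions omitted.

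The main obstacle I anticipate is the careful bookkeeping in the quasi-regular (limit circle at both ends) case: one must produce solutions $v$ near $a$ and $w$ near $b$, extend them into $\dom(T_{max})$ without destroying the real-valuedness and the Wronskian identities at the \emph{other} endpoint, and then verify that the two-sided conditions of Theorem \ref{23.t4.12} genuinely decouple into one condition at $a$ and one at $b$. The cleanest route is probably to invoke the freedom granted by the last sentence of Theorem \ref{23.t4.12} (replacing $v,w$ by any admissible pair $h,k$) and to choose $h$ equal to $v$ near $a$ and identically zero in a neighborhood of $b$, and $k$ equal to $w$ near $b$ and zero near $a$; then $W(h,k)|_a^b$, $W(h,h)|_a^b$, $W(k,k)|_a^b$ all vanish trivially, $h,k$ are real near the relevant endpoints, and the domain condition reads $W(h,g)(a)=0$, $W(k,g)(b)=0$, i.e.\ $W(v,g)(a)=0$, $W(w,g)(b)=0$, which is \eqref{23.4.5.6}. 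A secondary point requiring attention is the converse direction — that \emph{every} separated self-adjoint extension arises this way — which amounts to observing that a separated self-adjoint extension is exactly one whose domain is cut out by boundary conditions not coupling the two endpoints, so in the limit circle case at $a$ it imposes a single real condition $c_1 g(a)+c_2 g^{[1]}(a)=0$, and one then picks $v$ to be the real solution of $(\tau-\la)u=0$ with $v(a)=c_1$, $v^{[1]}(a)=-c_2$ (or the appropriate normalization making $W(v,g)(a)$ equal the imposed functional), and similarly at $b$.
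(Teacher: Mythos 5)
The paper itself offers no proof of Theorem \ref{23.t4.13}: it is recalled as standard material (with references such as \cite{We80}, \cite{We03}, \cite{Ze05}), so your proposal can only be measured against the standard argument. Your ``if'' direction is essentially that standard route and is sound: patch a real solution $v$ of $(\tau-\la)u=0$ near $a$ with $0$ near $b$ (and symmetrically $k=w$ near $b$, $0$ near $a$) to obtain $h,k\in\dom(T_{max})$, verify the Wronskian and independence requirements, and invoke the $h,k$-freedom in the last sentence of Theorem \ref{23.t4.12}; in the mixed and doubly limit point configurations, Theorem \ref{23.t4.11} and the deficiency count $n_\pm=1$ (resp.\ $0$) do the rest.

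The genuine gap lies in your converse direction (and in the symmetry step of your limit circle/limit point case): you describe the separated condition at the singular endpoint $a$ as ``a single real homogeneous condition on $(g(a),g^{[1]}(a))$'' and propose to ``pick $v$ with $v(a)=c_1$, $v^{[1]}(a)=-c_2$.'' At a singular limit circle endpoint neither $g(a)$ nor $g^{[1]}(a)$ need exist for $g\in\dom(T_{max})$ --- this is precisely the phenomenon the whole paper is about --- and the initial value problem at the singular endpoint $a$ is in general not solvable, so neither step makes sense as written; the same objection applies to your use of the ``real vector $(v(a),v^{[1]}(a))$'' in the symplectic argument for symmetry. The repair is standard but must be spelled out: fix a real fundamental system $u_1,u_2$ of $(\tau-\la)u=0$ with $W(u_1,u_2)=1$; since $\tau$ is in the limit circle case at $a$, the limits $W(u_j,g)(a)$, $j=1,2$, exist for every $g\in\dom(T_{max})$ and serve as generalized boundary data. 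A separated condition at $a$ is then $c_1W(u_1,g)(a)+c_2W(u_2,g)(a)=0$; self-adjointness forces, via the vanishing of the boundary form and the identity \eqref{23.4.5.24}, that $(c_1,c_2)$ is a complex multiple of a real pair, and linearity of the Wronskian in its first argument converts the condition into $W(v,g)(a)=0$ with $v=c_1u_1+c_2u_2$ a nontrivial \emph{real} solution on all of $(a,b)$. Likewise, the symmetry claim ``$W(\ol g,h)(a)=0$ whenever $W(v,g)(a)=W(v,h)(a)=0$'' should be proved with the pair $(W(u_1,\cdot)(a),W(u_2,\cdot)(a))$, or directly from the Pl\"ucker identity $W(v,u)W(\ol g,h)-W(v,\ol g)W(u,h)+W(v,h)W(u,\ol g)=0$ evaluated in the limit $x\downarrow a$, not from pointwise boundary values. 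Finally, you take for granted that ``separated'' means exactly one condition at each limit circle endpoint (so that in the lc/lc case the two-dimensional constraint of Theorem \ref{23.t4.12} decouples as $1+1$); this bookkeeping is easy but should be stated, since it is exactly where the separatedness hypothesis enters the converse.
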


Restricting the solutions $v, w$ employed in Theorems \ref{23.t4.12} and \ref{23.t4.13} appropriately, one can derive analogs of Theorem \ref{23.t3.9}  and \ref{23.t3.10} in the regular case in the present singular, quasi-regular setting as follows:

\begin{theorem} \lb{23.t4.15}
Assume Hypothesis \ref{23.h2.1} and that $\tau$ is in the limit circle case at $a$ and $b$ $($i.e., $\tau$ is quasi-regular 
on $(a,b)$$)$. In addition, assume that 
$v_j \in \dom(T_{max})$, $j=1,2$, satisfy 
\begin{equation}
W(\ol{v_1}, v_2)(a) = W(\ol{v_1}, v_2)(b) = 1, \quad W(\ol{v_j}, v_j)(a) = W(\ol{v_j}, v_j)(b) = 0, \; j= 1,2.  
\end{equation}
$($E.g., real-valued solutions $v_j$, $j=1,2$, of $(\tau - \lambda) u = 0$ with $\lambda \in \bbR$, such that 
$W(v_1,v_2) = 1$.$)$ For $g\in\dom(T_{max})$ we introduce the generalized boundary values 
\begin{align}
\begin{split} 
\wti g_1(a) &= - W(v_2, g)(a), \quad \wti g_1(b) = - W(v_2, g)(b),    \\
\wti g_2(a) &= W(v_1, g)(a), \quad \;\,\,\, \wti g_2(b) = W(v_1, g)(b).   \lb{23.4.5.16}
\end{split} 
\end{align}
Then the following items $(i)$--$(iv)$ hold: \\[1mm] 
$(i)$ $T_{A,B}$ is a self-adjoint extension of $T_{min}$ if and only if there exist $2\times2$ matrices $A$ and $B$ 
$($with complex-valued entries$)$ satisfying
\begin{align}
\rank(A \quad B) = 2, \quad AJA^* = BJB^*, \quad J=\begin{pmatrix}0&-1\\1&0\end{pmatrix},  \lb{23.3.24A}
\end{align}
with $T_{A,B}$ given by 
\begin{align}
\begin{split} 
& T_{A,B} f = \tau f,     \\
& f \in \dom(T_{A,B})=\left\{g\in\dom(T_{max}) \, \bigg| \, A\begin{pmatrix} \wti g_1(a)\\ \wti g_2(a)\end{pmatrix}=B\begin{pmatrix} \wti g_1(b)\\ \wti g_2(b)\end{pmatrix} \right\}. \lb{23.3.25A}
\end{split} 
\end{align}
In addition, items $(ii)$ and $(iii)$ in Theorem \ref{23.t4.12} apply to $T_{A,B}$. \\[1mm] 
$(ii)$ All self-adjoint extensions $T_{\al,\be}$ of $T_{min}$ with separated boundary conditions are of the form
\begin{align}
& T_{\al,\be} f = \tau f, \quad \al,\be\in[0,\pi),   \no \\
& f \in \dom(T_{\al,\be})=\big\{g\in\dom(T_{max}) \, \big| \, \wti g_1(a)\cos(\al)+ \wti g_2(a)\sin(\al)=0;   \lb{23.3.52A} \\ 
& \hspace*{5.5cm} \, \wti g_1(b)\cos(\be)+ \wti g_2(b)\sin(\be) = 0 \big\}.    \no 
\end{align}
$(iii)$ All self-adjoint extensions $T_{\varphi,R}$ of $T_{min}$ with coupled boundary conditions are of the type
\begin{align}
\begin{split} 
& T_{\varphi,R} f = \tau f,    \\
& f \in \dom(T_{\varphi,R})=\bigg\{g\in\dom(T_{max}) \, \bigg| \begin{pmatrix} \wti g_1(b)\\ \wti g_2(b)\end{pmatrix} 
= e^{i\varphi}R \begin{pmatrix}
\wti g_1(a)\\ \wti g_2(a)\end{pmatrix} \bigg\}, \lb{23.3.53A}
\end{split}
\end{align}
where $\varphi\in[0,2\pi)$, and $R$ is a real $2\times2$ matrix with $\det(R)=1$ 
$($i.e., $R \in SL(2,\bbR)$$)$.  \\[1mm] 
$(iv)$ Every self-adjoint extension of $T_{min}$ is either of type $(ii)$ $($i.e., separated\,$)$ or of type 
$(iii)$ $($i.e., coupled\,$)$.
\end{theorem}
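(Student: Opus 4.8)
The plan is to reduce everything to the regular situation of Section~\ref{23.s2} by showing that the quadruple of generalized boundary values in \eqref{23.4.5.16} behaves exactly as $(g(a),g^{[1]}(a),g(b),g^{[1]}(b))$ does there. Introduce the boundary map
\[
\cB\colon \dom(T_{max}) \to \bbC^4, \qquad \cB g = \big(\wti g_1(a),\, \wti g_2(a),\, \wti g_1(b),\, \wti g_2(b)\big),
\]
and on $\bbC^2$ the alternating sesquilinear form $\langle (c_1,c_2),(d_1,d_2)\rangle = \ol{c_1}d_2 - \ol{c_2}d_1$. Items $(i)$--$(iv)$ will follow once one establishes the two facts: (I) $\cB$ is surjective and $\ker(\cB) = \dom(T_{min})$; and (II) for all $f,g \in \dom(T_{max})$,
\[
W(\ol f, g)(a) = \big\langle (\wti f_1(a), \wti f_2(a)),\, (\wti g_1(a), \wti g_2(a))\big\rangle, \qquad
W(\ol f, g)(b) = \big\langle (\wti f_1(b), \wti f_2(b)),\, (\wti g_1(b), \wti g_2(b))\big\rangle.
\]
Granting (I)--(II), Green's identity \eqref{23.4.5} becomes $(T_{max}f,g)_{\Lr} - (f,T_{max}g)_{\Lr} = \langle \cB_b f, \cB_b g\rangle - \langle \cB_a f, \cB_a g\rangle$, where $\cB_a,\cB_b$ are the two $\bbC^2$-blocks of $\cB$; thus $\cB$ is a boundary triple for $T_{max}=T_{min}^*$ whose abstract structure is identical to that of $g\mapsto(g(a),g^{[1]}(a),g(b),g^{[1]}(b))$ in the regular case. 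Consequently the self-adjoint extensions of $T_{min}$ are in bijection with the Lagrangian subspaces of $\bbC^4$ equipped with $\langle\cdot,\cdot\rangle\oplus(-\langle\cdot,\cdot\rangle)$, and the remaining assertions---the matrix normal form in $(i)$, the separated form $(ii)$, the coupled form $(iii)$, and the dichotomy $(iv)$---are then purely linear-algebraic, being precisely the reductions already carried out in passing from Theorem~\ref{23.t3.4} to Theorems~\ref{23.t3.9} and \ref{23.t3.10}. (That $T_{A,B}$ actually extends $T_{min}$ is immediate from $\dom(T_{min})=\ker(\cB)\subseteq\dom(T_{A,B})$, and items $(ii)$, $(iii)$ of Theorem~\ref{23.t4.12} transfer to $T_{A,B}$ because $\cB$ is onto.)

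To prove (II) I would localize near $a$. Fix $c\in(a,b)$. Since $\tau$ is in the limit circle case at $a$, on $(a,c)$ the expression is limit circle at $a$ and regular at $c$, so by Theorem~\ref{23.t4.11}(ii) the maximal domain on $(a,c)$ is $\dom(T_{min})|_{(a,c)}$ together with a two-dimensional complement; using a cutoff equal to $1$ near $a$ and $0$ near $c$ one checks that $\{v_1,v_2\}$, restricted to $(a,c)$, spans this complement modulo $\dom(T_{min})|_{(a,c)}$ (the normalization $W(\ol{v_1},v_2)(a)=1$ guaranteeing linear independence at $a$). Write $f\equiv d_1v_1+d_2v_2$ and $g\equiv c_1v_1+c_2v_2$ modulo $\dom(T_{min})$ near $a$; since the coefficients of $\tau$ are real, $\dom(T_{max})$ and $\dom(T_{min})$ are conjugation invariant, so $\ol f\equiv\ol{d_1}\,\ol{v_1}+\ol{d_2}\,\ol{v_2}$. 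Because $W(\cdot,\cdot)(a)$ annihilates any argument lying in $\dom(T_{min})$ near $a$ (Theorem~\ref{23.t4.6}, applied on $(a,c)$), is bilinear, and satisfies $\ol{W(\ol h,k)(a)}=-\,W(\ol k,h)(a)$, the normalizations $W(\ol{v_1},v_2)(a)=1$ and $W(\ol{v_j},v_j)(a)=0$ give $W(\ol f,g)(a)=\ol{d_1}c_2-\ol{d_2}c_1$, once one notes---a short consequence of the same normalizations, which force $v_1,v_2$ to be unimodular phases times real functions modulo $\dom(T_{min})$ near $a$---that $|W(v_1,v_2)(a)|=1$. Evaluating $\wti g_1(a)=-W(v_2,g)(a)$ and $\wti g_2(a)=W(v_1,g)(a)$ on the same decomposition identifies $(\wti g_1(a),\wti g_2(a))$ with $W(v_1,v_2)(a)\,(c_1,c_2)$, and likewise for $f$; combining these yields (II) at $a$, and the endpoint $b$ is handled symmetrically.

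For (I), the inclusion $\dom(T_{min})\subseteq\ker(\cB)$ is immediate from the description of $\dom(T_{min})$ in Theorem~\ref{23.t4.6} (take $h=v_1$ and $h=v_2$). Conversely, if $\cB g=0$ then $W(v_1,g)(a)=W(v_2,g)(a)=W(v_1,g)(b)=W(v_2,g)(b)=0$, and feeding the expansion in (II) with an arbitrary $h\in\dom(T_{max})$ in place of $\ol f$ (legitimate by conjugation invariance) forces $W(h,g)(a)=W(h,g)(b)=0$ for every such $h$, i.e.\ $g\in\dom(T_{min})$ by Theorem~\ref{23.t4.6}; hence $\ker(\cB)=\dom(T_{min})$. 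Finally, since $\tau$ is limit circle at both endpoints, Theorem~\ref{23.t4.11}(ii) gives $n_\pm(T_{min})=2$, so $\dim\big(\dom(T_{max})/\dom(T_{min})\big)=n_++n_-=4$; thus $\cB$ descends to an injective linear map $\dom(T_{max})/\dom(T_{min})\to\bbC^4$ between four-dimensional spaces, hence onto, proving surjectivity of $\cB$.

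The main obstacle is the localization inside (II): one must argue that the germs at $a$ of elements of $\dom(T_{max})$ are, modulo $\dom(T_{min})$, exactly the $\bbC$-span of the germs of $v_1,v_2$, so that the boundary Wronskian at $a$ is computed from two scalar coordinates alone. This is where the limit circle hypothesis at $a$ (and at $b$) is genuinely used---were $\tau$ limit point at $a$, all Wronskians at $a$ would vanish identically (Theorem~\ref{23.t4.11}(i)) and there would be no boundary data there---and it requires the usual cutoff bookkeeping to move between $(a,b)$ and a one-sided neighborhood $(a,c)$. Everything downstream of (I)--(II) is the linear algebra of Lagrangian subspaces of a symplectic $\bbC^4$, already executed in Section~\ref{23.s2} in the guise of the regular case.
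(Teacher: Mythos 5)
The paper offers no proof of Theorem \ref{23.t4.15} at all: it is part of the material declared ``standard'' at the opening of Section \ref{23.s3} and is quoted from the literature (e.g.\ \cite{NZ92}, \cite{Ze05}, \cite{BHS19}); the only hint of an argument is Remark \ref{23.r4.16}\,$(ii)$, $(iii)$, which records the regular-case specialization and the Wronskian identity \eqref{23.4.5.24}. Your proposal is therefore not comparable line-by-line with anything in the paper, but it is the expected GKN/boundary-map argument and it is essentially sound: the two pivots (I) $\ker\cB=\dom(T_{min})$ with $\cB$ onto $\bbC^4$, and (II) the sesquilinear identity expressing $W(\ol f,g)(d)$ through the generalized boundary values, do reduce $(i)$--$(iv)$ to exactly the Lagrangian-subspace linear algebra underlying Theorems \ref{23.t3.9} and \ref{23.t3.10}. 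In particular you correctly isolate the one genuinely delicate point: since $v_1,v_2$ are only normalized sesquilinearly, the map $g\mapsto(\wti g_1(a),\wti g_2(a))$ identifies the coefficients $(c_1,c_2)$ of $g$ only up to the factor $W(v_1,v_2)(a)$, and your observation that $W(\ol{v_j},v_j)(a)=0$ forces each $v_j$ to be (modulo the null space at $a$) a phase times a real function, whence $|W(v_1,v_2)(a)|=1$ and the phase cancels in the form, is exactly what makes (II) true; this is consistent with \eqref{23.4.5.24}.

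One bookkeeping slip in the localization should be repaired: on $(a,c)$ the expression is limit circle at $a$ \emph{and regular at $c$}, so by Theorem \ref{23.t4.11}\,$(ii)$ the quotient $\dom(T_{max,(a,c)})/\dom(T_{min,(a,c)})$ is four-dimensional, not two-dimensional as you state; the two-dimensional object you actually need (and subsequently use) is the quotient of $\dom(T_{max})$ by the null space of the boundary form at $a$, $\{g\in\dom(T_{max})\,|\,W(h,g)(a)=0 \text{ for all } h\in\dom(T_{max})\}$, which is two-dimensional precisely because $\tau$ is in the limit circle case at $a$, and in which $v_1,v_2$ are independent (hence spanning) by the normalization $W(\ol{v_1},v_2)(a)=1$, $W(\ol{v_j},v_j)(a)=0$. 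With that restatement the cutoff argument, and hence (II), (I), and the downstream linear algebra, go through; alternatively (II) and \eqref{23.4.5.24} can be obtained in one line from the Pl\"ucker-type identity $W(f,g)\,W(v_1,v_2)=W(f,v_1)W(g,v_2)-W(f,v_2)W(g,v_1)$ evaluated in the limit $x\to d$, which avoids the localization altogether.
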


\begin{remark} \lb{23.r4.16}
$(i)$ If $\tau$ is in the limit point case at one endpoint, say, at the endpoint $b$, one omits the corresponding boundary condition involving $\beta \in [0, \pi)$ at $b$ in \eqref{23.3.52A} to obtain all self-adjoint extensions $T_{\alpha}$ of 
$T_{min}$, indexed by $\alpha \in [0, \pi)$. (In this case item $(iii)$ in Theorem \ref{23.t4.15} is vacuous.) In the case where $\tau$ is in the limit point case at both endpoints, all boundary values and boundary conditions become superfluous as in this case $T_{min} = T_{max}$ is self-adjoint. \\[1mm] 
$(ii)$ In the special case where $\tau$ is regular on the finite interval $[a,b]$, choose $v_j \in \dom(T_{max})$, $j=1,2$, 
such that 
\begin{align}
v_1(x) = \begin{cases} \theta_0(\lambda,x,a), & \text{for $x$ near a}, \\
\theta_0(\lambda,x,b), & \text{for $x$ near b},  \end{cases}   \quad 
v_2(x) = \begin{cases} \phi_0(\lambda,x,a), & \text{for $x$ near a}, \\
\phi_0(\lambda,x,b), & \text{for $x$ near b},  \end{cases}   \lb{23.4.5.21}
\end{align} 
where $\phi_0(\lambda,\, \cdot \,,d)$, $\theta_0(\lambda,\, \cdot \,,d)$, $d \in \{a,b\}$, are real-valued solutions of $(\tau - \lambda) u = 0$, $\lambda \in \bbR$, satisfying the boundary conditions 
\begin{align}
\begin{split} 
& \phi_0(\lambda,a,a) = \theta_0^{[1]}(\lambda,a,a) = 0, \quad \theta_0(\lambda,a,a) = \phi_0^{[1]}(\lambda,a,a) = 1, \\ 
& \phi_0(\lambda,b,b) = \theta_0^{[1]}(\lambda,b,b) = 0, \quad \; \theta_0(\lambda,b,b) = \phi_0^{[1]}(\lambda,b,b) = 1. 
\lb{23.4.5.22}
\end{split} 
\end{align} 
Then one verifies that
\begin{align}
\wti g_1 (a) = g(a), \quad \wti g_1 (b) = g(b), \quad \wti g_2 (a) = g^{[1]}(a), \quad \wti g_2 (b) = g^{[1]}(b),   \lb{23.4.5.23} 
\end{align}
and hence Theorem \ref{23.t4.15} in the special regular case recovers Theorems \ref{23.t3.9} and \ref{23.t3.10}. \\[1mm]
$(iii)$ An explicit calculation demonstrates that for $g, h \in \dom(T_{max})$,
\begin{equation}
\wti g_1(d) \wti h_2(d) - \wti g_2(d) \wti h_1(d) = W(g,h)(d), \quad d \in \{a,b\},   \lb{23.4.5.24}
\end{equation} 
interpreted in the sense that either side in \eqref{23.4.5.24} has a finite limit as $d \downarrow a$ and $d \uparrow b$. 
Of course, for \eqref{23.4.5.24} to hold at $d \in \{a,b\}$, it suffices that $g$ and $h$ lie locally in $\dom(T_{max})$ near $x=d$. \\[1mm]
$(iv)$ Clearly, $\wti g_1, \wti g_2$ depend on the choice of $v_j$, $j=1,2$, and a more precise notation would indicate this as $\wti g_{1,v_2}, \wti g_{2,v_1}$, etc. \hfill $\diamond$
\end{remark} 

In the special case where $T_{min}$ is bounded from below, one can further analyze the generalized boundary values  
\eqref{23.4.5.16} in the singular context by invoking principal and nonprincipal solutions of $\tau u = \lambda u$ for appropriate $\lambda \in \bbR$. This leads to natural analogs of \eqref{23.4.5.23} also in the singular case, and we will turn to this topic in our next section.

\section{Generalized Boundary Values in the Semibounded Case} \lb{23.s4}

In this section we characterize generalized boundary values in the case where $T_{min}$ is bounded from below with the help of principal and nonprincipal solutions of $\tau u = \lambda u$ for appropriate $\lambda \in \bbR$, and recall how they can be used to characterize all self-adjoint extensions of $T_{min}$.

We start by reviewing some oscillation theory with particular emphasis on principal and nonprincipal solutions, a notion originally due to Leighton and Morse \cite{LM36} (see also Rellich \cite{Re43}, \cite{Re51} and Hartman and Wintner \cite[Appendix]{HW55}). Our outline below follows \cite{CGN16}, \cite[Sects~13.6, 13.9, 13.0]{DS88}, 
\cite[Ch.~XI]{Ha02}, \cite{NZ92}, \cite[Chs.~4, 6--8]{Ze05}. 

\begin{definition} \lb{23.d2.4}
Assume Hypothesis \ref{23.h2.1}. \\[1mm] 
$(i)$ Fix $c\in (a,b)$ and $\lambda\in\bbR$. Then $\tau - \lam$ is
called {\it nonoscillatory} at $a$ $($resp., $b$$)$, 
if every real-valued solution $u(\lambda,\dott)$ of 
$\tau u = \lambda u$ has finitely many
zeros in $(a,c)$ $($resp., $(c,b)$$)$. Otherwise, $\tau - \lam$ is called {\it oscillatory}
at $a$ $($resp., $b$$)$. \\[1mm] 
$(ii)$ Let $\lambda_0 \in \bbR$. Then $T_{min}$ is called bounded from below by $\lambda_0$, 
and one writes $T_{min} \geq \lambda_0 I$, if 
\begin{equation} 
(u, [T_{min} - \lambda_0 I]u)_{L^2((a,b);rdx)}\geq 0, \quad u \in \dom(T_{min}).
\end{equation}
\end{definition}

The following is a key result. 

\begin{theorem} \lb{23.t2.5} 
Assume Hypothesis \ref{23.h2.1}. Then the following items $(i)$--$(iii)$ are
equivalent: \\[1mm] 
$(i)$ $T_{min}$ $($and hence any symmetric extension of $T_{min})$
is bounded from below. \\[1mm] 
$(ii)$ There exists a $\nu_0\in\bbR$ such that for all $\lambda < \nu_0$, $\tau - \lam$ is
nonoscillatory at $a$ and $b$. \\[1mm]
$(iii)$ For fixed $c, d \in (a,b)$, $c \leq d$, there exists a $\nu_0\in\bbR$ such that for all
$\lambda<\nu_0$, $\tau u = \lambda u$ has $($real-valued\,$)$ nonvanishing solutions
$u_a(\lambda,\dott) \neq 0$,
$\hatt u_a(\lambda,\dott) \neq 0$ in a neighborhood $(a,c]$ of $a$, and $($real-valued\,$)$ nonvanishing solutions
$u_b(\lambda,\dott) \neq 0$, $\hatt u_b(\lambda,\dott) \neq 0$ in a neighborhood $[d,b)$ of
$b$, such that 
\begin{align}
&W(\hatt u_a (\lambda,\dott),u_a (\lambda,\dott)) = 1,
\quad u_a (\lambda,x)=\oh(\hatt u_a (\lambda,x))
\text{ as $x\downarrow a$,} \lb{23.2.9} \\
&W(\hatt u_b (\lambda,\dott),u_b (\lambda,\dott))\, = 1,
\quad u_b (\lambda,x)\,=\oh(\hatt u_b (\lambda,x))
\text{ as $x\uparrow b$,} \lb{23.2.9a} \\
&\int_a^c dx \, p(x)^{-1}u_a(\lambda,x)^{-2}=\int_d^b dx \, 
p(x)^{-1}u_b(\lambda,x)^{-2}=\infty,  \lb{23.2.10} \\
&\int_a^c dx \, p(x)^{-1}{\hatt u_a(\lambda,x)}^{-2}<\infty, \quad 
\int_d^b dx \, p(x)^{-1}{\hatt u_b(\lambda,x)}^{-2}<\infty. \lb{23.2.11}
\end{align}
\end{theorem}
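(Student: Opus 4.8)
The plan is to prove the cycle $(i)\Rightarrow(iii)\Rightarrow(ii)\Rightarrow(i)$, with the analytic weight on the equivalence between a lower bound for $T_{min}$ and disconjugacy of $\tau-\lambda$, and on the limiting construction of principal solutions. For $g\in\dom(T_{min,0})$, set $\mathfrak q_\lambda[g]=\int_a^b\big(p\,|g'|^2+(q-\lambda r)\,|g|^2\big)\,dx$; integrating by parts (Lemma \ref{23.l4.2}\,$(i)$, the boundary terms vanishing by compact support) gives $(g,(T_{min}-\lambda I)g)_{\Lr}=\mathfrak q_\lambda[g]$, so that $T_{min}\ge\nu_0 I$ is equivalent to $\mathfrak q_\lambda[g]\ge(\nu_0-\lambda)\|g\|_{\Lr}^2$ for all $g\in\dom(T_{min,0})$ and $\lambda<\nu_0$.

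\textbf{Step 1: $(i)\Rightarrow(iii)$.} Assume $T_{min}\ge\nu_0 I$ and fix $\lambda<\nu_0$. Were $\tau-\lambda$ not disconjugate on $(a,b)$, a real solution $u$ of $\tau u=\lambda u$ would vanish at two points $a<\alpha<\beta<b$; then $u\,\chi_{[\alpha,\beta]}$ lies in the form domain of $T_{min}-\lambda I$ and, by integration by parts, $\mathfrak q_\lambda[u\,\chi_{[\alpha,\beta]}]=0$, contradicting the strict positivity above. Hence $\tau-\lambda$ is disconjugate on $(a,b)$ — in particular nonoscillatory at $a$ and at $b$ — and the same $\nu_0$ works for every such $\lambda$. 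Fix $c\le d$ in $(a,b)$. Disconjugacy yields a principal solution $u_a(\lambda,\dott)$ at $a$, obtained as the normalized limit $\lim_{\beta\downarrow a}u_\beta$ of the solutions $u_\beta$ of $\tau u=\lambda u$ with $u_\beta(\beta)=0$; it is positive on $(a,c]$ and satisfies $\int_a^c p(x)^{-1}u_a(\lambda,x)^{-2}\,dx=\infty$ by definition of principality. Choosing any real solution independent of $u_a(\lambda,\dott)$ and, if necessary, adding a suitable multiple of $u_a(\lambda,\dott)$ and rescaling, one obtains $\hatt u_a(\lambda,\dott)$ that is nonvanishing on $(a,c]$, has $\int_a^c p(x)^{-1}\hatt u_a(\lambda,x)^{-2}\,dx<\infty$, and satisfies $W(\hatt u_a(\lambda,\dott),u_a(\lambda,\dott))=1$. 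Since then $(\hatt u_a/u_a)'=-(p\,u_a^2)^{-1}$ is non-integrable at $a$, the ratio $\hatt u_a(\lambda,x)/u_a(\lambda,x)$ diverges as $x\downarrow a$, i.e.\ $u_a(\lambda,x)=\oh(\hatt u_a(\lambda,x))$ as $x\downarrow a$. The construction at $b$ is identical, and \eqref{23.2.9}--\eqref{23.2.11} follow.

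\textbf{Steps 2--3: $(iii)\Rightarrow(ii)\Rightarrow(i)$.} If, for every $\lambda<\nu_0$, $\tau u=\lambda u$ admits a real solution with no zero on $(a,c]$, then by Sturm's separation theorem every real solution has at most one zero on $(a,c]$, so $\tau-\lambda$ is nonoscillatory at $a$, and likewise at $b$; this is $(iii)\Rightarrow(ii)$. For $(ii)\Rightarrow(i)$ I would decompose at an interior regular point $c$: splitting $\mathfrak q_\lambda$ at $c$ and correcting near $c$ by the finite-interval theory of Section \ref{23.s2}, $T_{min}$ on $(a,b)$ is bounded from below if and only if the minimal operators on $(a,c)$ and on $(c,b)$ both are. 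On, say, $(c,b)$: nonoscillation of $\tau-\lambda$ at $b$ for all $\lambda<\nu_0$ gives, for each such $\lambda$, a solution of $\tau u=\lambda u$ positive near $b$, hence (by Picone's identity) $\mathfrak q_\lambda\ge0$ on functions supported near $b$, while on a complementary compact subinterval one has $\mathfrak q_\lambda\ge0$ once $\lambda$ is below the lowest Dirichlet eigenvalue there; choosing $\lambda$ sufficiently negative and splitting supports produces the desired lower bound. The endpoint $a$ is handled symmetrically.

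The step I expect to be the main obstacle is \textbf{Step 1}: establishing that the limit $u_a(\lambda,\dott)=\lim_{\beta\downarrow a}u_\beta$ exists, remains nonvanishing down to $a$, and is genuinely principal — so that \eqref{23.2.10} holds while every independent solution obeys \eqref{23.2.11} — and carrying out the bookkeeping that keeps the Wronskian normalization and the $\oh(\dott)$-relations mutually consistent, uniformly in $\lambda$ on a half-line. The cutoff in Steps 2--3 is delicate only because $p,q,r$ are merely assumed to lie in $\Ll$ (and $1/p\in\Ll$), but it is localized at a regular point and thus reduces to the theory already recorded in Section \ref{23.s2}.
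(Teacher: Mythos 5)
You should first note that the paper offers no proof of Theorem \ref{23.t2.5}: it is recalled as a standard result, with the argument delegated to \cite{CGN16}, \cite{DS88}, \cite{Ha02}, \cite{NZ92}, \cite{Ze05}. Measured against those standard arguments, your skeleton is the right one: the identity $(g,(T_{min}-\lambda)g)_{\Lr}=\mathfrak q_\lambda[g]$ on compactly supported $g$, plus extension by zero of a solution vanishing at two interior points, does yield disconjugacy of $\tau-\lambda$ for $\lambda$ below the lower bound; Sturm separation gives $(iii)\Rightarrow(ii)$; and a decomposition at an interior point combined with the Picone/Jacobi factorization $\mathfrak q_\lambda[g]=\int p\,u^2|(g/u)'|^2$ on functions supported where a solution $u$ is positive is the classical route to $(ii)\Rightarrow(i)$.

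As written, however, there are concrete gaps. (1) The heart of $(i)\Rightarrow(iii)$ is exactly the step you defer: existence of the limit $\lim_{\beta\downarrow a}u_\beta$, its nonvanishing, and \eqref{23.2.10}; moreover ``by definition of principality'' is circular here, since the paper \emph{defines} principal solutions through Theorem \ref{23.t2.5}\,$(iii)$ (Definition \ref{23.d2.6}). You do not need the Hartman limit construction at all: disconjugacy supplies a real solution $u$ with no zero on $(a,c_1]$ for a prescribed $c_1\in(c,b)$ (e.g.\ the solution vanishing at a point of $(c_1,b)$), and then the reduction-of-order dichotomy closes the step by direct computation: if $\int_a^c p^{-1}u^{-2}\,dx=\infty$ set $u_a=u$ and $\hatt u_a(x)=u(x)\int_x^{c_1}p^{-1}u^{-2}\,dx'$, otherwise set $\hatt u_a=u$ and $u_a(x)=u(x)\int_a^{x}p^{-1}u^{-2}\,dx'$ (the formulas \eqref{23.2.13}, \eqref{23.2.15}, verified independently of Lemma \ref{23.l2.7}); in either case $W(\hatt u_a,u_a)=1$ and \eqref{23.2.9}--\eqref{23.2.11} follow from $(u_a/\hatt u_a)'=1/\big(p\,\hatt u_a^{2}\big)$ and $(\hatt u_a/u_a)'=-1/\big(p\,u_a^{2}\big)$. (2) In $(ii)\Rightarrow(i)$ the neighborhood of $b$ carrying a positive solution of $\tau u=\lambda u$ depends on $\lambda$, while you then let $\lambda\to-\infty$; fix one $\lambda_1<\nu_0$, obtain a fixed $(b_*,b)$, and use $\mathfrak q_\lambda\geq\mathfrak q_{\lambda_1}\geq 0$ there for all $\lambda\leq\lambda_1$, after which only the fixed compact piece $[c,b_*]$ must be handled; otherwise ``below the lowest Dirichlet eigenvalue there'' chases a $\lambda$-dependent interval. (3) Hypothesis \ref{23.h2.1} assumes $1/p\in\Ll$ but \emph{not} $p\in\Ll$, so neither the claim that $u\chi_{[\alpha,\beta]}$ lies in the relevant form domain nor the support-splitting near $c$ reduces painlessly to ``the finite-interval theory of Section \ref{23.s2}'': cutoffs must be chosen adapted to $p$ (e.g.\ $\chi'=\mathrm{const}/p$ on the transition interval, so that $p\chi'$ stays bounded and quasi-derivatives remain $AC_{loc}$), and the localization error $\int p|\chi'|^2|g|^2$ must be absorbed through a bound of the type $\|g\|^2_{\infty,K}\leq\varepsilon\int_K p|g'|^2+C_\varepsilon\int_K r|g|^2$, since it is not dominated by $\|g\|^2_{\Lr}$ for free. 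Alternatively, the splitting can be done abstractly: the closure of $T_{min,0}^{(a,c)}\oplus T_{min,0}^{(c,b)}$ is a symmetric operator with finite deficiency indices contained in $T_{min}$, and semiboundedness survives finite-dimensional symmetric extensions, which avoids cutoffs altogether.
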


\begin{definition} \lb{23.d2.6}
Assume Hypothesis \ref{23.h2.1}, suppose that $T_{min}$ is bounded from below, and let 
$\lambda\in\bbR$. Then $u_a(\lambda,\dott)$ $($resp., $u_b(\lambda,\dott)$$)$ in Theorem
\ref{23.t2.5}\,$(iii)$ is called a {\it principal} $($or {\it minimal}\,$)$
solution of $\tau u=\lambda u$ at $a$ $($resp., $b$$)$. A real-valued solution 
$\wti u_a(\lambda,\dott)$ $($resp., $\wti u_b(\lambda,\dott)$$)$ of $\tau
u=\lambda u$ linearly independent of $u_a(\lambda,\dott)$ $($resp.,
$u_b(\lambda,\dott)$$)$ is called {\it nonprincipal} at $a$ $($resp., $b$$)$.
\end{definition}

Principal and nonprincipal solutions are well-defined due to Lemma \ref{23.l2.7}\,$(i)$ below. 

\begin{lemma} \lb{23.l2.7} Assume Hypothesis \ref{23.h2.1} and suppose that $T_{min}$ is bounded 
from below. \\[1mm]
$(i)$ $u_a(\lambda,\dott)$ and $u_b(\lambda,\dott)$ in Theorem
\ref{23.t2.5}\,$(iii)$ are unique up to $($nonvanishing\,$)$ real constant multiples. Moreover,
$u_a(\lambda,\dott)$ and $u_b(\lambda,\dott)$ are minimal solutions of
$\tau u=\lambda u$ in the sense that 
\begin{align}
u(\lambda,x)^{-1} u_a(\lambda,x)&=\oh(1) \text{ as $x\downarrow a$,} 
\lb{23.2.12} \\ 
u(\lambda,x)^{-1} u_b(\lambda,x)&=\oh(1) \text{ as $x\uparrow b$,} \lb{23.2.12a}
\end{align}
for any other solution $u(\lambda,\dott)$ of $\tau u=\lambda u$
$($which is nonvanishing near $a$, resp., $b$$)$ with
$W(u_a(\lambda,\dott),u(\lambda,\dott))\neq 0$, respectively, 
$W(u_b(\lambda,\dott),u(\lambda,\dott))\neq 0$. \\[1mm]
$(ii)$ Let $u(\lambda,\dott) \neq 0$ be any nonvanishing solution of $\tau u=\lambda
u$ near $a$ $($resp., $b$$)$. Then for $c_1>a$ $($resp., $c_2<b$$)$
sufficiently close to $a$ $($resp., $b$$)$, 
\begin{align}
& \hatt u_a(\lambda,x)=u(\lambda,x)\int_x^{c_1}dx' \, 
p(x')^{-1}u(\lambda,x')^{-2} \lb{23.2.13} \\
& \quad \bigg(\text{resp., }\hatt u_b(\lambda,x)=u(\lambda,x)\int^x_{c_2}dx' \, 
p(x')^{-1}u(\lambda,x')^{-2}\bigg) \lb{23.2.14} 
\end{align} 
is a nonprincipal solution of $\tau u=\lambda u$ at $a$ $($resp.,
$b$$)$. If $\hatt u_a(\lambda,\dott)$ $($resp., $\hatt
u_b(\lambda,\dott))$ is a nonprincipal solution of $\tau u=\lambda u$
at $a$ $($resp., $b$$)$ then 
\begin{align}
& u_a(\lambda,x)=\hatt u_a(\lambda,x)\int_a^{x}dx' \, 
p(x')^{-1}{\hatt u_a(\lambda,x')}^{-2} \lb{23.2.15} \\
& \quad \bigg(\text{resp., } u_b(\lambda,x)=\hatt u_b(\lambda,x)\int^b_{x}dx' \, 
p(x')^{-1}{\hatt u_b(\lambda,x')}^{-2}\bigg) \lb{23.2.16} 
\end{align} 
is a principal solution of $\tau u=\lambda u$ at $a$ $($resp., $b$$)$. 
\end{lemma}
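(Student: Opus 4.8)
The plan is to establish all statements at the endpoint $a$, the case of $b$ being entirely analogous. The one elementary tool is the reduction-of-order identity: if $u$ solves $\tau u = \lambda u$ and is nonvanishing on a subinterval $I \subseteq (a,b)$ and $x_0 \in I$, then the function $v(x) := u(x)\int_{x_0}^x dx' \, p(x')^{-1} u(x')^{-2}$ is again a solution of $\tau u = \lambda u$ on $I$, and a short computation with quasi-derivatives (using $W(f,g) = p(fg' - f'g)$) gives $W(u,v) \equiv 1$ on $I$, so $\{u,v\}$ is a fundamental system there. Recall also that by Theorem \ref{23.t2.5}\,$(iii)$, the solutions $u_a(\lambda,\dott)$ and $\hatt u_a(\lambda,\dott)$ are nonvanishing near $a$ and, since $W(\hatt u_a(\lambda,\dott), u_a(\lambda,\dott)) = 1$, form a fundamental system there.

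The crux is the following integrability dichotomy, from which item $(i)$ is immediate: a nontrivial real-valued solution $u$ of $\tau u = \lambda u$ that is nonvanishing near $a$ satisfies $\int_a^{c'} dx \, p(x)^{-1} u(x)^{-2} = \infty$ (equivalently, for all $c'$ close to $a$) if and only if $u$ is a nonzero real constant multiple of $u_a(\lambda,\dott)$. Indeed, write $u = \alpha u_a(\lambda,\dott) + \beta \hatt u_a(\lambda,\dott)$; dividing by $\hatt u_a(\lambda,\dott)$, \eqref{23.2.9} gives $u(\lambda,x)/\hatt u_a(\lambda,x) \to \beta$ as $x \downarrow a$, so if $\beta \neq 0$ then $|u| \ge (|\beta|/2)|\hatt u_a(\lambda,\dott)|$ near $a$ and \eqref{23.2.11} forces $\int_a p^{-1} u^{-2} < \infty$, while if $\beta = 0$ then $\int_a p^{-1} u^{-2} = \alpha^{-2}\int_a p^{-1} u_a^{-2} = \infty$ by \eqref{23.2.10} (with $\alpha \in \bbR \setminus\{0\}$ because $u$ is a nontrivial real solution). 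This is precisely the uniqueness assertion in $(i)$. For the minimality assertion, let $u$ be nonvanishing near $a$ with $W(u_a(\lambda,\dott),u) \neq 0$; expanding $u = \gamma u_a(\lambda,\dott) + \delta \hatt u_a(\lambda,\dott)$ one finds $W(u_a(\lambda,\dott),u) = -\delta \neq 0$ via \eqref{23.2.9}, and dividing numerator and denominator of $u_a(\lambda,\dott)/u$ by $\hatt u_a(\lambda,\dott)$ while using $u_a(\lambda,\dott) = \oh(\hatt u_a(\lambda,\dott))$ yields $u(\lambda,x)^{-1} u_a(\lambda,x) \to 0$ as $x \downarrow a$, which is \eqref{23.2.12}.

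For item $(ii)$, choose $c_1 > a$ close enough to $a$ that $u$ is nonvanishing on $(a,c_1]$, and set $F(x) := \int_x^{c_1} dx' \, p(x')^{-1} u(x')^{-2}$, so that $\hatt u_a(\lambda,\dott)$ as in \eqref{23.2.13} equals $u(\lambda,\dott)F$. By reduction of order it solves $\tau u = \lambda u$ with $W(u(\lambda,\dott),\hatt u_a(\lambda,\dott)) = -1$, and it is nonvanishing near $a$ since $F$ is positive there with a (possibly infinite) limit as $x \downarrow a$. From $F' = -p^{-1} u^{-2}$ one gets $p^{-1}\hatt u_a^{-2} = -F'F^{-2} = (F^{-1})'$ on $(a,c_1)$, whence $\int_a^{c_1'} p^{-1}\hatt u_a^{-2}\,dx = F(c_1')^{-1} - \lim_{x \downarrow a} F(x)^{-1} < \infty$ for $a < c_1' < c_1$; by the dichotomy, $\hatt u_a(\lambda,\dott)$ is not a multiple of $u_a(\lambda,\dott)$, i.e., it is a nonprincipal solution. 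Conversely, suppose $\hatt u_a(\lambda,\dott)$ is any nonprincipal solution; the dichotomy then gives $\int_a p^{-1}\hatt u_a^{-2} < \infty$, so $G(x) := \int_a^x dx' \, p(x')^{-1}\hatt u_a(x')^{-2}$ is finite and tends to $0$ as $x \downarrow a$, and $u_a(\lambda,\dott)$ as in \eqref{23.2.15} equals $\hatt u_a(\lambda,\dott)G$, which by reduction of order solves $\tau u = \lambda u$ with $W(\hatt u_a(\lambda,\dott),u_a(\lambda,\dott)) = 1$, is nonvanishing near $a$, satisfies $u_a(\lambda,\dott) = \hatt u_a(\lambda,\dott)G = \oh(\hatt u_a(\lambda,\dott))$, and, from $p^{-1} u_a^{-2} = G'G^{-2} = -(G^{-1})'$, satisfies $\int_a p^{-1} u_a^{-2} = \infty$. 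Thus $u_a(\lambda,\dott)$ fulfills all of \eqref{23.2.9}--\eqref{23.2.11}, i.e., it is a principal solution (and, by $(i)$, the principal solution up to a nonzero real constant).

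No deep ingredient enters; the only points requiring care are bookkeeping ones — choosing $c_1$ small enough that the solution in question is genuinely nonvanishing on $(a,c_1]$, so that the integrals and the reduction-of-order identity are legitimate — and recognizing $p^{-1}\hatt u_a^{-2}$ and $p^{-1}u_a^{-2}$ as the exact derivatives $(F^{-1})'$ and $-(G^{-1})'$ so as to evaluate the relevant integrals near $a$. In particular, this route avoids the device, used in \cite{NZ92}, of transforming a limit-circle endpoint into a regular one.
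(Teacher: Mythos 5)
The paper never proves Lemma \ref{23.l2.7}: it is recalled as standard oscillation-theory material, with the reader sent to \cite{CGN16}, \cite[Sects.~13.6, 13.9]{DS88}, \cite[Ch.~XI]{Ha02}, \cite{NZ92}, \cite[Chs.~4, 6--8]{Ze05}, so there is no in-text argument to compare yours against. Your proof is correct and self-contained, and it runs along the classical lines of those references: the reduction-of-order identity plus the integrability dichotomy (``$\int_a dx\, p^{-1}u^{-2}=\infty$ if and only if $u$ is a nonzero real constant multiple of $u_a(\lambda,\dott)$''), which you extract cleanly from \eqref{23.2.9}--\eqref{23.2.11}; items $(i)$ and $(ii)$ then follow by the Wronskian computations, the limit of $u_a/\hatt u_a$, and the evaluation of the integrals through the exact derivatives $(F^{-1})'$ and $-(G^{-1})'$. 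In particular you avoid, as the paper itself advertises, the device of \cite{NZ92} of transforming a limit-circle endpoint into a regular one.

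One step you leave implicit should be stated explicitly: in the converse half of item $(ii)$ you apply the dichotomy to an \emph{arbitrary} nonprincipal solution $\hatt u_a(\lambda,\dott)$, but both the dichotomy and the very definition of the integral in \eqref{23.2.15} require that this solution be nonvanishing near $a$, whereas Definition \ref{23.d2.6} only demands real-valuedness and linear independence from $u_a(\lambda,\dott)$. The missing observation is one line within your own framework: expanding the given nonprincipal solution as $\alpha u_a(\lambda,\dott)+\beta \hatt u_a^{(0)}(\lambda,\dott)$ in the fixed fundamental system of Theorem \ref{23.t2.5}\,$(iii)$, linear independence forces $\beta\neq 0$, the quotient by $\hatt u_a^{(0)}(\lambda,\dott)$ tends to $\beta$, and hence the solution is of one sign near $a$ (the same expansion shows every nontrivial real solution is eventually nonvanishing near $a$ for the $\lambda$ under consideration). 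A purely cosmetic companion remark: your reduction-of-order identity is stated with base point $x_0$ interior to the interval of nonvanishing, while \eqref{23.2.15} uses the endpoint $a$; since the integral converges at $a$, the function $G$ is a locally absolutely continuous antiderivative of $p^{-1}\hatt u_a^{-2}$ and the same computation (or the observation that $\hatt u_a G$ differs from the $x_0$-based solution by a constant multiple of $\hatt u_a$) gives that $\hatt u_a G$ solves $\tau u=\lambda u$ with $W(\hatt u_a(\lambda,\dott), \hatt u_a G)=1$, exactly as you use. With these two sentences added, the argument is complete.
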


Next, we revisit  in Theorem \ref{23.t4.15} how the generalized boundary values are utilized in the description of all self-adjoint extensions of $T_{min}$. In particular, assuming $T_{min} \geq \lambda_0 I$, $\lambda_0 \in \bbR$, to be bounded from below, naturally leads to invoking principal and nonprincipal solutions of $\tau u = \lambda_0 u$. 

Assuming Hypothesis \ref{23.h2.1} and $f\in\Ll$, and given a fundamental system of solutions $y_1(z,\dott), y_2(z,\dott)$ 
of the homogeneous second order equation $\tau y = z y$, one recalls (via the variation of constants formula) that the general solution of the corresponding nonhomogeneous equation 
\begin{equation}
-(p(x)y'(x))' + [q(x)-zr(x)] y(x) = f(x) \;\text{ a.e.~on }\; (a,b),  \lb{23.2.4.11a}
\end{equation}
is given by the expression 
\begin{align}
y(x) &= C y_1(z,x) + D y_2(z,x)    \no \\
& \quad + \int_{x_0}^x r(x') dx' \, \f{[y_1(z,x) y_2(z,x') 
- y_2(z,x) y_1(z,x')]}{W(y_1(z,\dott), y_2(z,\dott))} f(x'),     \lb{23.2.4.11b}  \\
& \hspace*{5.87cm}  z \in \bbC, \; x, x_0 \in (a,b),    \no 
\end{align}
for some constants $C, D \in \bbC$. In addition, one has
\begin{align}
y^{[1]}(x) &= C y_1^{[1]}(z,x) + D y_2^{[1]}(z,x)  \no \\
& \quad + \int_{x_0}^x r(x') dx' \, \f{[y_1^{[1]}(z,x) y_2(z,x') 
- y_2^{[1]}(z,x) y_1(z,x')]}{W(y_1(z,\dott), y_2(z,\dott))} f(x'),      \lb{23.2.4.11c} \\
& \hspace*{6.25cm}  z \in \bbC, \; x, x_0 \in (a,b).    \no
\end{align}
Equations \eqref{23.2.4.11b} and \eqref{23.2.4.11c} will be used in the proof of the principal result of this paper next:

\begin{theorem} \lb{23.t7.3.11}
Assume Hypothesis \ref{23.h2.1} and that $\tau$ is in the limit circle case at $a$ and $b$ $($i.e., $\tau$ is quasi-regular 
on $(a,b)$$)$. In addition, assume that $T_{min} \geq \lambda_0 I$ for some $\lambda_0 \in \bbR$, and denote by 
$u_a(\lambda_0, \dott)$ and $\hatt u_a(\lambda_0, \dott)$ $($resp., $u_b(\lambda_0, \dott)$ and 
$\hatt u_b(\lambda_0, \dott)$$)$ principal and nonprincipal solutions of $\tau u = \lambda_0 u$ at $a$ 
$($resp., $b$$)$, satisfying
\begin{equation}
W(\hatt u_a(\lambda_0,\dott), u_a(\lambda_0,\dott)) = W(\hatt u_b(\lambda_0,\dott), u_b(\lambda_0,\dott)) = 1.  
\lb{23.7.3.33AB} 
\end{equation}
Introducing $v_j \in \dom(T_{max})$, $j=1,2$, via 
\begin{align}
v_1(x) = \begin{cases} \hatt u_a(\lambda_0,x), & \text{for $x$ near a}, \\
\hatt u_b(\lambda_0,x), & \text{for $x$ near b},  \end{cases}   \quad 
v_2(x) = \begin{cases} u_a(\lambda_0,x), & \text{for $x$ near a}, \\
u_b(\lambda_0,x), & \text{for $x$ near b},  \end{cases}   \lb{23.7.3.33A}
\end{align} 
one obtains for all $g \in \dom(T_{max})$, 
\begin{align}
\begin{split} 
\wti g(a) &= - W(v_2, g)(a) = \wti g_1(a) =  - W(u_a(\lambda_0,\dott), g)(a)    \\
&= \lim_{x \downarrow a} \f{g(x)}{\hatt u_a(\lambda_0,x)},    \\
\wti g(b) &= - W(v_2, g)(b) = \wti g_1(b) =  - W(u_b(\lambda_0,\dott), g)(b)   \\
&= \lim_{x \uparrow b} \f{g(x)}{\hatt u_b(\lambda_0,x)},    
\lb{23.7.3.34A} 
\end{split} \\
\begin{split} 
{\wti g}^{\, \prime}(a) &= W(v_1, g)(a) = \wti g_2(a) = W(\hatt u_a(\lambda_0,\dott), g)(a)   \\
&= \lim_{x \downarrow a} \f{g(x) - \wti g(a) \hatt u_a(\lambda_0,x)}{u_a(\lambda_0,x)},    \\ 
{\wti g}^{\, \prime}(b) &= W(v_1, g)(b) = \wti g_2(b) = W(\hatt u_b(\lambda_0,\dott), g)(b)   \\ 
&= \lim_{x \uparrow b} \f{g(x) - \wti g(b) \hatt u_b(\lambda_0,x)}{u_b(\lambda_0,x)}.    \lb{23.7.3.34B}
\end{split} 
\end{align}
In particular, the limits on the right-hand sides in \eqref{23.7.3.34A}, \eqref{23.7.3.34B} exist. 
\end{theorem}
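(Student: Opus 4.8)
The plan is to represent an arbitrary $g \in \dom(T_{max})$ near the left endpoint $a$ by variation of parameters with respect to the fundamental system $\{\hatt u_a(\lambda_0,\dott), u_a(\lambda_0,\dott)\}$ of $\tau u = \lambda_0 u$, to read off the boundary values from that representation, and to verify that the asserted limits extract the same quantities; the right endpoint $b$ is then treated verbatim. As a preliminary, one checks that the $v_j$ of \eqref{23.7.3.33A} lie in $\dom(T_{max})$ (near each endpoint they agree with solutions of $\tau u = \lambda_0 u$, which lie in $\Lr$ there by the limit circle hypothesis, and a smooth interpolation on the remaining compact subinterval keeps $v_j, v_j^{[1]} \in \ACl$ and $\tau v_j \in \Lr$), and that \eqref{23.7.3.33AB} together with the reality of $u_a, \hatt u_a, u_b, \hatt u_b$ gives $W(\ol{v_1},v_2)(a) = W(\ol{v_1},v_2)(b) = 1$ and $W(\ol{v_j},v_j)(a) = W(\ol{v_j},v_j)(b) = 0$; hence Theorem \ref{23.t4.15} applies and $\wti g_1, \wti g_2$ in \eqref{23.4.5.16} are defined. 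The first chains of equalities in \eqref{23.7.3.34A}, \eqref{23.7.3.34B} are then purely notational --- they are \eqref{23.4.5.16} combined with the observation that the Wronskian limit at $a$ depends only on the germ of $v_j$ at $a$, where $v_1 = \hatt u_a(\lambda_0,\dott)$ and $v_2 = u_a(\lambda_0,\dott)$ --- and the existence of these limits is part of Lemma \ref{23.l4.2}\,$(ii)$. So everything reduces to the two limit formulas on the right-hand sides, which we establish at $a$.

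Set $\tilde h := \tau g - \lambda_0 g \in \Lr$, so $g$ solves $\tau g - \lambda_0 g = \tilde h$ a.e.\ near $a$. Fix $x_0 \in (a,b)$ so close to $a$ that $u_a(\lambda_0,\dott)$ and $\hatt u_a(\lambda_0,\dott)$ are nonvanishing on $(a,x_0]$ (possible by Theorem \ref{23.t2.5}\,$(iii)$). Applying \eqref{23.2.4.11b}, \eqref{23.2.4.11c} with $z = \lambda_0$, $f = \tilde h$, $y_1 = \hatt u_a(\lambda_0,\dott)$, $y_2 = u_a(\lambda_0,\dott)$ (Wronskian $1$ by \eqref{23.7.3.33AB}) represents $g$ on $(a,x_0]$ as $g = C\,\hatt u_a(\lambda_0,\dott) + D\, u_a(\lambda_0,\dott) + I$, where $C, D \in \bbC$ are determined by $g(x_0), g^{[1]}(x_0)$ and
\[
I(x) = \hatt u_a(\lambda_0,x)\int_{x_0}^x r\, u_a(\lambda_0,\dott)\,\tilde h\, dx' - u_a(\lambda_0,x)\int_{x_0}^x r\, \hatt u_a(\lambda_0,\dott)\,\tilde h\, dx' ,
\]
with the analogous formula for $g^{[1]}$. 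Since $u_a(\lambda_0,\dott), \hatt u_a(\lambda_0,\dott) \in L^2((a,x_0);rdx)$ (limit circle) and $\tilde h \in L^2((a,x_0);rdx)$, the Cauchy--Schwarz inequality gives $\int_a^{x_0} r\, |u_a(\lambda_0,\dott)\,\tilde h|\, dx' < \infty$ and $\int_a^{x_0} r\, |\hatt u_a(\lambda_0,\dott)\,\tilde h|\, dx' < \infty$; consequently the tail integrals $\int_{x_0}^x r\, u_a(\lambda_0,\dott)\,\tilde h\, dx'$ and $\int_{x_0}^x r\, \hatt u_a(\lambda_0,\dott)\,\tilde h\, dx'$ have finite limits as $x\downarrow a$, while $\int_a^x r\,|u_a(\lambda_0,\dott)\,\tilde h|\, dx' \to 0$ and $\int_a^x r\,|\hatt u_a(\lambda_0,\dott)\,\tilde h|\, dx' \to 0$.

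For the first formula, a short computation using bilinearity of the Wronskian, $W(u_a(\lambda_0,\dott),\hatt u_a(\lambda_0,\dott)) = -1$, $W(u_a(\lambda_0,\dott),u_a(\lambda_0,\dott)) = 0$, and the collapse of $W(u_a(\lambda_0,\dott), I)$ to $-\int_{x_0}^x r\, u_a(\lambda_0,\dott)\,\tilde h\, dx'$ (after invoking $W(\hatt u_a,u_a)=1$) shows $W(u_a(\lambda_0,\dott), g)(x) = -C - \int_{x_0}^x r\, u_a(\lambda_0,\dott)\,\tilde h\, dx'$, hence $-W(u_a(\lambda_0,\dott),g)(a) = C - \int_a^{x_0} r\, u_a(\lambda_0,\dott)\,\tilde h\, dx'$. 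Dividing the representation of $g$ by $\hatt u_a(\lambda_0,x)$ and using $u_a(\lambda_0,x) = \oh(\hatt u_a(\lambda_0,x))$ as $x\downarrow a$ (cf.\ \eqref{23.2.9}), both $D\, u_a(\lambda_0,x)/\hatt u_a(\lambda_0,x)$ and $[u_a(\lambda_0,x)/\hatt u_a(\lambda_0,x)]\int_{x_0}^x r\, \hatt u_a(\lambda_0,\dott)\,\tilde h\, dx'$ tend to $0$, so $\lim_{x\downarrow a} g(x)/\hatt u_a(\lambda_0,x) = C - \int_a^{x_0} r\, u_a(\lambda_0,\dott)\,\tilde h\, dx' = -W(u_a(\lambda_0,\dott),g)(a)$, which is \eqref{23.7.3.34A}.

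For the second formula, the analogous computation gives $W(\hatt u_a(\lambda_0,\dott),g)(x) = D - \int_{x_0}^x r\, \hatt u_a(\lambda_0,\dott)\,\tilde h\, dx'$, with limit $\wti g_2(a) = D + \int_a^{x_0} r\, \hatt u_a(\lambda_0,\dott)\,\tilde h\, dx'$. Substituting the representation of $g$ and using $C - \wti g(a) = \int_a^{x_0} r\, u_a(\lambda_0,\dott)\,\tilde h\, dx'$ from the previous step, the difference quotient collapses to
\[
\frac{g(x) - \wti g(a)\,\hatt u_a(\lambda_0,x)}{u_a(\lambda_0,x)} = \frac{\hatt u_a(\lambda_0,x)}{u_a(\lambda_0,x)}\int_a^x r\, u_a(\lambda_0,\dott)\,\tilde h\, dx' + \bigg( D - \int_{x_0}^x r\, \hatt u_a(\lambda_0,\dott)\,\tilde h\, dx' \bigg),
\]
whose bracket tends to $\wti g_2(a)$. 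The first term is an $\infty \cdot 0$ indeterminacy, and this is the main obstacle. I would resolve it by writing $u_a(\lambda_0,\dott) = \hatt u_a(\lambda_0,\dott)\,\Phi$ with $\Phi(x) := \int_a^x p(x')^{-1}\hatt u_a(\lambda_0,x')^{-2}\, dx'$, as in \eqref{23.2.15}; by \eqref{23.2.11}, $\Phi$ is finite, nonnegative, nondecreasing, and $\Phi(x) \to 0$ as $x\downarrow a$, so that $\hatt u_a(\lambda_0,x)/u_a(\lambda_0,x) = 1/\Phi(x)$ and, using $\Phi(x') \le \Phi(x)$ for $x' \in (a,x)$,
\[
\bigg| \frac{1}{\Phi(x)}\int_a^x r\, \hatt u_a(\lambda_0,\dott)\,\Phi\,\tilde h\, dx' \bigg| \le \int_a^x r\, |\hatt u_a(\lambda_0,\dott)\,\tilde h|\, dx' \longrightarrow 0 \quad \text{as } x \downarrow a .
\]
Hence the first term vanishes in the limit and $\lim_{x\downarrow a}[g(x) - \wti g(a)\,\hatt u_a(\lambda_0,x)]/u_a(\lambda_0,x) = \wti g_2(a) = W(\hatt u_a(\lambda_0,\dott),g)(a)$, which is \eqref{23.7.3.34B}. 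Replacing $\int_a^{x_0}$ by $\int_{x_0}^b$, ``$x\downarrow a$'' by ``$x\uparrow b$'', and the solutions at $a$ by those at $b$ throughout yields the statements at $b$; in particular, all the limits in \eqref{23.7.3.34A}, \eqref{23.7.3.34B} exist.
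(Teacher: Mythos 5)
Your proposal is correct, and its skeleton coincides with the paper's: you represent $g$ near the endpoint by the variation-of-constants formulas \eqref{23.2.4.11b}, \eqref{23.2.4.11c} with respect to the normalized pair $\hatt u_a(\lambda_0,\dott), u_a(\lambda_0,\dott)$, read off $W(u_a(\lambda_0,\dott),g)(x)$ and $W(\hatt u_a(\lambda_0,\dott),g)(x)$ from that representation, and obtain the first limit exactly as in the paper from $u_a(\lambda_0,x)=\oh(\hatt u_a(\lambda_0,x))$. Where you genuinely diverge is the crux step, the second limit in \eqref{23.7.3.34B}: the paper rewrites the difference quotient as $\big([g/\hatt u_a]-\wti g(a)\big)\big/\big(u_a/\hatt u_a\big)$ and applies L'H{\^o}pital's rule, whereas you substitute the explicit representation, isolate the offending term $\f{\hatt u_a(\lambda_0,x)}{u_a(\lambda_0,x)}\int_a^x r\,u_a(\lambda_0,\dott)\,\tilde h\,dx'$, and kill it by the monotone bound $\Phi(x')\leq\Phi(x)$ for $\Phi(x)=\int_a^x p^{-1}\hatt u_a(\lambda_0,\dott)^{-2}dx'$ together with absolute convergence of $\int_a r\,|\hatt u_a(\lambda_0,\dott)\tilde h|\,dx'$ (Cauchy--Schwarz and the limit circle hypothesis). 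This trade is favorable in one respect: the paper's L'H{\^o}pital step is applied to functions that are only locally absolutely continuous with a.e.\ derivatives, a point it passes over silently, while your estimate is completely elementary, quantitative, and needs nothing beyond \eqref{23.2.11}/\eqref{23.2.15}; the price is a slightly longer computation and one small point you should make explicit: the identification $u_a(\lambda_0,\dott)=\hatt u_a(\lambda_0,\dott)\,\Phi$ (with no constant) uses Lemma \ref{23.l2.7} (uniqueness of the principal solution up to constants) together with the normalization \eqref{23.7.3.33AB}, since $W(\hatt u_a(\lambda_0,\dott),\hatt u_a(\lambda_0,\dott)\Phi)=1$ pins the constant to $1$; alternatively your bound survives verbatim with an arbitrary nonzero constant, so this is a one-line remark rather than a gap. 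Your use of Cauchy--Schwarz to get absolute convergence of the integrals is also marginally more self-contained than the paper's appeal to Lemma \ref{23.l4.2}\,$(ii)$ for the existence of the Wronskian limits, though both are legitimate.
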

\begin{proof}
It suffices to discuss the limits $x \downarrow a$ in \eqref{23.7.3.34A}, \eqref{23.7.3.34B} as the limits $x \uparrow b$ are treated in precisely the same manner. Let $g \in \dom(T_{max})$, then the variation of constants formulas \eqref{23.2.4.11b}, \eqref{23.2.4.11c} yield for $c \in (a,b)$, 
\begin{align}
g(x) &= c_{1,a} u_a(\lambda_0,x) + c_{2,a} \hatt u_a(\lambda_0,x)    \no \\
& \quad - u_a(\lambda_0,x) \int_c^x r(x') dx' \, \hatt u_a(\lambda_0,x') ((\tau - \lambda_0)g)(x')   \lb{23.7.3.36A} \\
& \quad + \hatt u_a(\lambda_0,x) \int_c^x r(x') dx' \, u_a(\lambda_0,x') ((\tau - \lambda_0)g)(x'),    \no \\
g^{[1]}(x) &= c_{1,a} u_a^{[1]}(\lambda_0,x) + c_{2,a} \hatt u_a^{[1]}(\lambda_0,x)    \no \\
& \quad - u_a^{[1]}(\lambda_0,x) \int_c^x r(x') dx' \, \hatt u_a(\lambda_0,x') ((\tau - \lambda_0)g)(x')   \lb{23.7.3.37A} \\
& \quad + \hatt u_a^{[1]}(\lambda_0,x) \int_c^x r(x') dx' \, u_a(\lambda_0,x') ((\tau - \lambda_0)g)(x').    \no 
\end{align}
Thus,
\begin{align} 
\begin{split}
W(u_a(\lambda_0, \dott), g)(x) &= - c_{2,a} - \int_c^x r(x') dx' \, u_a(\lambda_0,x') ((\tau - \lambda_0)g)(x'), \lb{23.7.3.38A} \\
W(\hatt u_a(\lambda_0, \dott), g)(x) &= - c_{1,a} + \int_c^x r(x') dx' \, \hatt u_a(\lambda_0,x') ((\tau - \lambda_0)g)(x'), 
\end{split} 
\end{align}
and hence the limits $x \downarrow a$ on the right-hand sides of \eqref{23.7.3.38A} exist by the hypothesis 
$v_j, g \in \dom(T_{max})$, $j=1,2$, and by Lemma \ref{23.l4.2}\,$(ii)$,
\begin{align} 
\begin{split}
W(v_1, g)(a) &= W(u_a(\lambda_0, \dott), g)(a)     \\ 
&= - c_{2,a} - \int_c^a r(x') dx' \, u_a(\lambda_0,x') ((\tau - \lambda_0)g)(x'), \lb{23.7.3.39A} \\
W(v_2, g)(a) &= W(\hatt u_a(\lambda_0, \dott), g)(a)    \\
&= - c_{1,a} + \int_c^a r(x') dx' \, \hatt u_a(\lambda_0,x') ((\tau - \lambda_0)g)(x'). 
\end{split} 
\end{align}
Combining \eqref{23.7.3.36A} and \eqref{23.7.3.39A} using the fact that 
$\lim_{x \downarrow a} u_a(\lambda_0,x)/ \hatt u_a(\lambda_0,x) = 0$ (cf.\ \eqref{23.2.9}) then yields
\begin{align}
\begin{split}
\wti g(a) &= \lim_{x \downarrow a} \f{g(x)}{\hatt u_a(\lambda_0,x)} 
= c_{2,a} + \int_c^a r(x') dx' \, u_a(\lambda_0,x') ((\tau - \lambda_0)g)(x')    \\
&= - W(u_a(\lambda_0,\dott),g)(a) = - W(v_1, g)(a).
\end{split} 
\end{align}
Similarly, employing \eqref{23.7.3.33AB} and L'H{\^ o}pital's rule, 
\begin{align}
{\wti g}^{\, \prime}(a) &= \lim_{x \downarrow a} \f{g(x) - \wti g(a) \hatt u_a(\lambda_0,x)}{u_a(\lambda_0,x)} 
=  \lim_{x \downarrow a}\f{[g(x)/\hatt u_a(\lambda_0,x)] - \wti g(a)}{u_a(\lambda_0,x)/\hatt u_a(\lambda_0,x)} 
= \f{0}{0}     \no \\
&= \lim_{x \downarrow a} 
\f{\big[\hatt u_a(\lambda_0,x) g'(x) - \hatt u_a^{\, \prime}(\lambda_0,x) g(x)\big]\big/ \big[\hatt u_a(\lambda_0,x)^2\big]}
{\big[\hatt u_a(\lambda_0,x) u_a'(\lambda_0,x) -\hatt u_a^{\, \prime}(\lambda_0,x) u_a(\lambda_0,x)\big] \big/ 
\big[\hatt u_a(\lambda_0,x)^2\big]} \cdot \f{p(x)}{p(x)}      \no \\
&= W(\hatt u_a(\lambda_0,x), g)(a) = W(v_2,g)(a), 
\end{align}
completing the proof of \eqref{23.7.3.34A}, \eqref{23.7.3.34B} in the case $x \downarrow a$.
\end{proof} 

\begin{remark} \lb {23.r7.311}
The notion of ``generalized boundary values'' in \eqref{23.4.5.16} and \eqref{23.7.3.34A}, \eqref{23.7.3.34B} corresponds to ``boundary values for $\tau$'' in the sense of \cite[p.~1297, 1304--1307]{DS88}, see also 
\cite[Sect.~3]{Fu73}, \cite[p.~57]{Fu77}. \hfill $\diamond$
\end{remark}

The Friedrichs extension $T_F$ of $T_{min}$ now permits a particularly simple characterization in terms of the generalized boundary values $\wti g(a), \wti g(b)$ as derived by Niessen and Zettl \cite{NZ92}(see also \cite{GP79}, \cite{Ka72}, \cite{Ka78}, \cite{KKZ86}, \cite{MZ00}, \cite{Re51}, \cite{Ro85}, \cite{YSZ15}):

\begin{theorem} \lb{23.t7.3.12}
Assume Hypothesis \ref{23.h2.1} and that $\tau$ is in the limit circle case at $a$ and $b$ $($i.e., $\tau$ is quasi-regular 
on $(a,b)$$)$. In addition, assume that $T_{min} \geq \lambda_0 I$ for some $\lambda_0 \in \bbR$. Then the Friedrichs extension $T_F$ of $T_{min}$ is characterized by
\begin{align}
T_F f = \tau f, \quad f \in \dom(T_F)= \big\{g\in\dom(T_{max})  \, \big| \, \wti g(a) = \wti g(b) = 0\big\}.    \lb{23.7.3.42}
\end{align}
\end{theorem}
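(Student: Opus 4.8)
The plan is to characterize the Friedrichs extension $T_F$ as the one associated with the semibounded quadratic form obtained by closing the form $g \mapsto (g, (T_{min} - \lambda_0 I)g)_{L^2((a,b);rdx)}$ on $\dom(T_{min})$, shifted back by $\lambda_0$. The standard description of $T_F$ (see, e.g., the monographs cited in Section \ref{23.s3}) states that $\dom(T_F) = \dom(T_{max}) \cap \dom(\mathfrak{t}_F)$, where $\mathfrak{t}_F$ is the form closure; equivalently, $T_F$ is the unique self-adjoint extension of $T_{min}$ whose form domain is contained in the form domain of \emph{every} other lower semibounded self-adjoint extension. Since by Theorem \ref{23.t4.15}(ii) all self-adjoint extensions with separated boundary conditions are of the form $T_{\alpha,\beta}$ indexed by the generalized boundary values $\wti g_1(a), \wti g_2(a), \wti g_1(b), \wti g_2(b)$ (which by Theorem \ref{23.t7.3.11} equal $\wti g(a), {\wti g}^{\,\prime}(a), \wti g(b), {\wti g}^{\,\prime}(b)$), and since $T_F$ is always a separated-boundary-condition extension (the Friedrichs extension respects the decomposition near $a$ and near $b$), it suffices to identify which values of $(\alpha,\beta)$ give $T_F$.

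First I would record that the principal solution $u_a(\lambda_0,\dott)$ is, by Lemma \ref{23.l2.7}(i) and the relation \eqref{23.2.10}, precisely the solution of $\tau u = \lambda_0 u$ near $a$ that is \emph{not} of finite form-norm type in the sense that distinguishes it: more concretely, the key analytic fact is that $\hatt u_a(\lambda_0,\dott) \notin \dom(\mathfrak{t}_F)$ near $a$ while $u_a(\lambda_0,\dott)$, together with all $g \in \dom(T_{max})$ satisfying $\wti g(a) = 0$, does lie in the form domain near $a$. The cleanest route is: (1) show that $g \in \dom(T_{max})$ with $\wti g(a) = 0$ near $a$ behaves like $o(\hatt u_a(\lambda_0,x))$ as $x \downarrow a$, indeed like $O(u_a(\lambda_0,x))$ by the L'Hôpital computation in the proof of Theorem \ref{23.t7.3.11}, whereas a function with $\wti g(a) \neq 0$ behaves like $\wti g(a)\,\hatt u_a(\lambda_0,x)(1+o(1))$; (2) verify that $\int_a^c r(x)\,|u_a(\lambda_0,x)|^2\,dx < \infty$ automatically (it is an $L^2$ solution since $\tau$ is limit circle at $a$) and, more importantly, that the associated form integral $\int_a^c [p|u_a'|^2 + q|u_a|^2]\,dx$ converges — this is where the minimality/principality of $u_a$ enters — while the analogous integral for $\hatt u_a$ diverges, by \eqref{23.2.10}–\eqref{23.2.11} combined with the integration-by-parts identity $\int p|y'|^2 + q|y|^2 = [p y' \bar y] + \int r (\tau y)\bar y$.

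Then I would invoke the variational characterization of $T_F$: among all lower semibounded self-adjoint extensions $T$ of $T_{min}$, the form domain $\mathcal{Q}(T_F)$ is the smallest; equivalently, $T_F$ is the extension whose resolvent is, in the appropriate ordering at $\lambda_0 - \varepsilon$, the smallest. Using the separated-boundary-condition parametrization, the extension $T_{\alpha,\beta}$ with $\alpha$ chosen so that the boundary condition at $a$ reads $\wti g(a) = 0$ (i.e., $\alpha = 0$ in the notation of \eqref{23.3.52A} after identifying $\wti g_1 = \wti g$, $\wti g_2 = {\wti g}^{\,\prime}$) forces every element of its domain to vanish to the principal order near $a$; any other choice of $\alpha$ admits elements behaving like the nonprincipal solution, hence with larger form domain. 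Carrying this out at both endpoints picks out exactly $\wti g(a) = \wti g(b) = 0$, giving \eqref{23.7.3.42}. The bookkeeping that $T_{min} \subseteq T_{0,0} = T_F$ and that $T_{0,0}$ is self-adjoint is immediate from Theorem \ref{23.t4.15}(ii) with $\alpha = \beta = 0$.

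The main obstacle I expect is step (2): rigorously showing that the principal solution $u_a(\lambda_0,\dott)$ lies in the form domain of $T_F$ near $a$ while every nonprincipal solution does not, i.e., that the form-domain boundary condition at a limit-circle endpoint of a semibounded expression is exactly ``$\wti g(a) = 0$.'' This is essentially the content of the Rellich–Kalf–Niessen–Zettl circle of results on the Friedrichs extension via principal solutions, and the honest proof requires the asymptotic analysis of $\dom(T_{max})$ elements near $a$ (as set up in the proof of Theorem \ref{23.t7.3.11}) together with the divergence/convergence dichotomy \eqref{23.2.10}–\eqref{23.2.11} fed into the quadratic form. Everything else — the reduction to separated boundary conditions, the identification $\wti g_1 = \wti g$, $\wti g_2 = {\wti g}^{\,\prime}$ from Theorem \ref{23.t7.3.11}, and the variational minimality of $\mathcal{Q}(T_F)$ — is standard and can be cited.
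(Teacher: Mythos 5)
The paper does not actually prove Theorem \ref{23.t7.3.12}: it is recalled from the literature, with the characterization attributed to Niessen--Zettl \cite{NZ92} (see also \cite{GP79}, \cite{Ka72}, \cite{Ka78}, \cite{KKZ86}, \cite{MZ00}, \cite{Re51}, \cite{Ro85}, \cite{YSZ15}); Niessen and Zettl argue by transforming the nonoscillatory limit-circle endpoint into a regular one, a technique this paper explicitly avoids, while the form-theoretic route you sketch is closer to Kalf, Rosenberger, and Marletta--Zettl. So your overall strategy (Freudenthal's description $\dom(T_F)=\dom(T_{max})\cap\dom(\mathfrak{t}_F)$, reduction to separated boundary conditions, identification of the Friedrichs boundary condition with the principal solution) is a legitimate and standard route, but it is not what the paper does, and, more importantly, as written it contains a genuine gap at its decisive step.

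The gap is precisely your step (2), which is asserted rather than proved, and the sketch offered for it would not go through as stated. First, the asymptotics extracted from the proof of Theorem \ref{23.t7.3.11} are purely pointwise: $\wti g(a)=0$ gives $g(x)=O(u_a(\lambda_0,x))$ as $x\downarrow a$, but membership in the form domain $\mathcal{Q}(T_F)$ requires control of the energy of $g$ (equivalently, approximability in the form norm by elements of $\dom(T_{min})$), which no amount of pointwise comparison with $u_a$ provides; conversely, excluding functions with $\wti g(a)\neq 0$ from $\mathcal{Q}(T_F)$ amounts to a Hardy-type inequality near $a$ (or the Niessen--Zettl regularization), and it does not follow from \eqref{23.2.10}--\eqref{23.2.11}, which concern $\int p^{-1}u^{-2}$ rather than the energy density $p|u'|^2+q|u|^2$. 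Second, under Hypothesis \ref{23.h2.1} the potential $q$ is only real-valued and locally integrable, and semiboundedness of $T_{min}$ may arise from cancellation; in that generality the form domain near $a$ is \emph{not} characterized by term-by-term finiteness of $\int_a^c\big(p|g'|^2+q|g|^2\big)dx$, the integration-by-parts identity you invoke only converts the energy integral into a boundary term $p\,u_a'\,\overline{u_a}$ whose limit at $a$ need not exist, and so the proposed ``divergence/convergence dichotomy fed into the quadratic form'' is not a proof. Finally, two auxiliary claims are used without justification: that $T_F$ is a separated-boundary-condition extension, and that minimality of $\mathcal{Q}(T_F)$ singles out $\alpha=0$ among the $T_{\alpha,\beta}$ --- for the latter one must actually exhibit, for every other $\alpha$, an element of $\dom(T_{\alpha,\beta})$ lying outside $\mathcal{Q}(T_F)$, which is again the same hard analytic fact. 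In short, the proposal is a correct outline of a known strategy, but the central lemma (the form-domain boundary condition at a limit-circle endpoint is exactly $\wti g(a)=0$) is left unproved, as you yourself acknowledge; the paper sidesteps this by citing \cite{NZ92} and related works rather than proving the theorem.
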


\begin{remark} \lb{23.r7.3.13}
$(i)$ Our notation $\wti g (d)$, ${\wti g}^{\, \prime}(d)$ in Theorem \ref{23.t7.3.11}, instead of $\wti g_j (d)$, $j=1,2$, in 
Theorem \ref{23.t4.15}, of course resembles the use of the difference quotient in $(p g')(d)$, $d \in \{a,b\}$, in the context where $\tau$ is regular as explained in \eqref{23.1.1}--\eqref{23.1.12}. \\[1mm] 
$(ii)$ The generalized boundary values 
\begin{align}
\wti g(d) &= \lim_{x \to d} \f{g(x)}{\hatt u_d(\lambda_0,x)},   \lb{23.7.3.43} \\
{\wti g}^{\, \prime}(d) & = \lim_{x \to d} \f{g(x) - \wti g(d) \hatt u_d(\lambda_0,x)}{u_d(\lambda_0,x)},     \lb{23.7.3.44}
\end{align}
at an endpoint $d \in \{a,b\}$ have a longer history. They were originally introduced by Rellich \cite{Re43} in connection with coefficients $p, q, r$ that had a very particular behavior in a neighborhood of the endpoint $d$ of the type
\begin{align}
\begin{split}
p(x) &= (x - d)^{\sigma} \big[p_0 + p_1 (x - d) + p_2 (x - d)^2 + \cdots \big],  \\
q(x) &= (x - d)^{\sigma - 2} \big[q_0 + q_1 (x - d) + q_2 (x - d)^2 + \cdots \big],  \\
r(x) &= (x - d)^{\sigma - 2} \big[r_0 + r_1 (x - d) + r_2 (x - d)^2 + \cdots \big],  \\
\end{split}
\end{align}
with $\sigma, p_0, p_1, \dots , q_0, q_1, \dots , r_0, r_1, \dots  \in \bbR$, $p_0 \neq 0$, $r_k \neq 0$ for some 
$k \in \bbN_0$, $k_{\ell} = 0$ for $0 \leq \ell \leq k-1$, 
etc.\footnote{We employ the notation $\bbN_0= \bbN \cup \{0\}$ throughout this paper.} In 1951, Rellich considerably generalized the hypotheses on $p,q,r$. The case of the Bessel equation was reconsidered in \cite{GP84}, and the case of Schr\"odinger operators on $(0,\infty)$ with potentials $q$ satisfying
\begin{equation}
q(x) = \big(\gamma^2 - (1/4)\big) x^{-2} + \eta x^{-1} + \omega x^{-a} + W(x) \quad \text{for a.e.~$x > 0$},
\end{equation}
with $\gamma \geq 0$, $\eta, \omega \in \bbR$, $a \in (0,2)$, and $W \in L^{\infty}((0,\infty))$ real-valued a.e.,  was systematically treated in \cite{BG85}. Niessen and Zettl \cite{NZ92} thoroughly studied this issue under the general Hypothesis \ref{23.h2.1} in Theorems \ref{23.t4.15} and \ref{23.t7.3.11} and, in particular, derived the expression for 
$\wti g(c)$ in \eqref{23.7.3.43}. In this context we also refer to \cite[Propositions~6.11.1, 6.12.1]{BHS19}, which 
discusses ${\wti g}^{\, \prime}(d)$ in terms of boundary triples and identifies $W(\hatt u_b(\lambda_0,\dott), g)(d)$ and ${\wti g}^{\, \prime}(d)$. The analog of ${\wti g}^{\, \prime}(c)$ in \eqref{23.7.3.44} was not considered in \cite{BHS19} and \cite{NZ92}; it is this new wrinkle we now offer in this context of boundary conditions for self-adjoint singular (quasi-regular) Sturm--Liouville operators bounded from below.\\

The analog of ${\wti g}^{\, \prime}(d)$ in \eqref{23.7.3.44} was not considered in \cite{NZ92} and is the small new wrinkle this paper offers in this context of boundary conditions for self-adjoint singular Sturm--Liouville operators. The final explicit limit relation $\lim_{x \to d} \dots$ on the right-hand sides in \eqref{23.7.3.34B} appears to be a new contribution of this paper. \\[1mm] 
$(iii)$ As in \eqref{23.4.5.24}, one readily verifies for $g, h \in \dom(T_{max})$,
\begin{equation}
\wti g(d) {\wti h}^{\, \prime}(d) - {\wti g}^{\, \prime}(d) \wti h(d) = W(g,h)(d), \quad d \in \{a,b\},    \lb{23.7.3.44A} 
\end{equation} 
again interpreted in the sense that either side in \eqref{23.7.3.44A} has a finite limit as $d \downarrow a$ and 
$d \uparrow b$. \\[1mm] 
$(iv)$ While the generalized boundary values at the endpoint $d \in \{a,b\}$ clearly depend on the choice of nonprincipal solution $\hatt u_{d}(\lambda_0, \dott)$ of $\tau u = \lambda_0 u$ at $d$, the Friedrichs boundary conditions $\wti g(a) = \wti g(b) = 0$ are independent of the choice of this nonprincipal solution. That 
$\wti g(a) = \wti g(b) = 0$ represent the Friedrichs boundary condition was recognized by Rellich \cite{Re51} (he used slightly stronger assumptions on the coefficients $p,q,r$ than those in Hypothesis \ref{23.h2.1}). \\[1mm]
$(v)$ As always in this context, if $\tau$ is in the limit point case at one (or both) interval endpoints, the corresponding boundary conditions at that endpoint are dropped and only a separated boundary condition at the other end point (if the latter is a limit circle endpoint for $\tau$), has to be imposed in Theorems \ref{23.t7.3.11} and \ref{23.t7.3.12}. 
In the case where $\tau$ is in the limit point case at both endpoints, all boundary values and boundary conditions become superfluous as in this case $T_{min} = T_{max}$ is self-adjoint. \hfill $\diamond$
\end{remark}

\section{A Few Remarks on Weyl--Titchmarsh Functions} \lb{23.s5}

In this short section we briefly make contact with Weyl--Titchmarsh functions and demonstrate that the generalized boundary values in \eqref{23.7.3.34A}, \eqref{23.7.3.34B} naturally fit into this framework. 
For simplicity we single out the left endpoint $a$ in the following as the endpoint $b$ can be treated in precisely the same manner.  

\begin{hypothesis} \lb{23.h7.3.11a}
In addition to Hypothesis \ref{23.h2.1}, let $T_{\alpha_0, \beta_0}$, $\alpha_0, \beta_0 \in [0,\pi)$, be any self-adjoint extension of $T_{min}$ in $L^2((a,b); r dx)$ with separated boundary conditions as in \eqref{23.3.52A} 
$($if any\,$)$, and suppose that for some $($and hence for all\,$)$ $c \in (a,b)$, the self-adjoint operator 
$T_{\alpha_0, 0, a,c}$ in $L^2((a,c); r dx)$, associated with $\tau|_{(a,c)}$ and a Dirichlet boundary condition at $c$ 
$($i.e., $g(c)=0$, $g \in \dom(T_{max,a,c})$, the maximal operator associated with $\tau|_{(a,c)}$ in $L^2((a,c); rdx)$$)$, has purely discrete spectrum.
\end{hypothesis}

It has been shown in \cite{EGNT13} and \cite{KST12} that Hypothesis \ref{23.h7.3.11a} is equivalent to the existence of an entire solution $\phi_{\alpha_0}(z, \dott)$ of $\tau u = z u$, $z \in \bbC$, that is real-valued for $z \in \bbR$, and lies in 
$\dom(T_{\alpha_0, \beta_0})$ near the point $a$. In particular, $\phi_{\alpha_0}(z, \dott)$ satisfies the boundary condition indexed by $\alpha_0$ at the left endpoint $a$ if $\tau$ is in the limit circle case at $a$, and 
$\phi_{\alpha_0}(z, \dott) \in L^2((a,c); r dx)$ if $\tau$ is in the limit point case at $a$. In addition, a second, linearly independent entire solution $\theta_{\alpha_0}(z, \dott)$ of $\tau u = z u$ exists, with $\theta_{\alpha_0}(z, \dott)$ real-valued for $z \in \bbR$, satisfying
\begin{equation}
W(\theta_{\alpha_0}(z, \dott),\phi_{\alpha_0}(z, \dott)) =1, \quad z \in \bbC.
\end{equation}

We note that $\phi(z, \dott)$ is unique up to a nonvanishing entire factor (real on the real line) with respect to $z \in \bbC$. 
Hence, we may normalize $\phi_{\alpha_0}(z, \dott)$ such that
\begin{align}
\wti \phi_{\alpha_0} (z,a) &= - \sin(\alpha_0), \quad \wti \phi_{\alpha_0}^{\, \prime} (z,a) = \cos(\alpha_0), \quad 
z \in \bbC,   \lb{23.7.3.44AA}
\intertext{and thus,}
\wti \theta_{\alpha_0} (z,a) &= \cos(\alpha_0), \quad \;\;\,\, \wti \theta_{\alpha_0}^{\, \prime} (z,a) = \sin(\alpha_0), \quad 
z \in \bbC,     \lb{23.7.3.44BB}
\end{align}
normalizations we assume for the rest of this section. 

\begin{remark} \lb{23.r7.3.11b}
As kindly pointed out to us by Charles Fulton \cite{Fu20}, if $\tau$ is in the limit circle case and nonoscillatory at the endpoint $a$ (the most relevant case in this paper), Hypothesis \ref{23.h7.3.11a} is well-known to be satisfied and the existence of $\phi_{\alpha}(z,\dott)$ satisfying \eqref{23.7.3.44AA} and being entire with respect to $z$ has been discussed in \cite[p.~16--17]{Fu73}, \cite{Fu77}. For strongly singular situations implying the limit point case of $\tau$ at $a$ we refer, for instance, to \cite{EGNT13}, \cite{GZ06}, \cite{Ko49}, \cite{KST12}, and \cite{KT11}.  \hfill $\diamond$
\end{remark}

In addition to the entire fundamental system $\phi_{\alpha_0}(z, \dott), \theta_{\alpha_0}(z, \dott)$ of $\tau u = z u$, we also mention the standard entire fundamental system $\theta_0(z, \dott,c), \phi_0(z, \dott,c)$ of $\tau u = z u$ normalized at $c \in (a,b)$ in the usual manner,
\begin{align}
\begin{split} 
& \theta_0(z,c,c) =1, \quad \theta_0^{[1]}(z,c,c) =0,  \\ 
& \phi_0(z,c,c) =0, \quad \phi_0^{[1]}(z,c,c) =1; \quad z \in \bbC,   \lb{23.7.3.45B} 
\end{split}
\end{align}
and the Weyl--Titchmarsh solutions $\psi_{\alpha_0,-} (z,\dott)$ and $\psi_{\alpha_0,+} (z,\dott)$ of $\tau u = z u$ 
given by 
\begin{align}
& \psi_{\alpha_0,-} (z,x) = \theta_0(z,x,c) + m_{\alpha_0,0,-} (z)  \phi_0(z,x,c), \quad z \in \bbC \backslash \sigma(T_{\alpha_0, 0, a,c}), \; x \in (a,b), \\
& \psi_{\beta_0,+} (z,x) = \theta_0(z,x,c) + m_{0,\beta_0,+} (z)  \phi_0(z,x,c), \quad z \in \bbC \backslash \sigma(T_{0,\beta_0, c,b}), \; x \in (a,b), 
\end{align}
where $T_{0,\beta_0, c,b}$ in $L^2((c,b); r dx)$ is the self-adjoint operator associated with $\tau|_{(c,b)}$ and a Dirichlet 
boundary condition at $c$ (i.e., $g(c)=0$, $g \in \dom(T_{max,c,b})$, the maximal operator associated with 
$\tau|_{(c,b)}$ in $L^2((c,b); rdx)$), such that $\psi_{\alpha_0,-} (z, \dott)$ satisfies the boundary condition indexed by 
$\alpha_0$ at the left endpoint $a$ if $\tau$ is in the limit circle case at $a$, and 
$\psi_{\alpha_0,-} (z, \dott) \in L^2((a,c); r dx)$ if $\tau$ is in the limit point case at $a$, and analogously, 
$\psi_{\beta_0,+} (z,\dott)$ satisfies the boundary condition indexed by $\beta_0$ at the right endpoint $b$ if $\tau$ 
is in the limit circle case at $b$, and $\psi_{\beta_0,+} (z,\dott) \in L^2((c,b); r dx)$ if $\tau$ is in the limit point case at $b$.
In particular, $m_{\alpha_0,0,-} (\dott)$ is analytic on $\bbC \backslash \bbR$, meromorphic on $\bbC$, with simple poles on the real axis precisely at the simple eigenvalues of $T_{\alpha_0, 0, a,c}$. Moreover, $\phi_{\alpha_0} (z,\dott)$ is a $z$-dependent multiple of $\psi_{\alpha_0,-} (z,\dott)$, the multiple being entire, real-valued for $z \in \bbR$, and having simple zeros at the simple poles of $m_{\alpha_0,0,-} (\dott)$. In addition, one confirms that
\begin{equation}
m_{\alpha_0,0,-} (z) = \phi_{\alpha_0}^{[1]} (z,c)/\phi_{\alpha_0} (z,c), \quad 
z \in \bbC \backslash \sigma(T_{\alpha_0, 0, a,c}).
\end{equation}

In fact, existence of such an entire fundamental system of solutions of $\tau u = z u$ has been anticipated by Kodaira \cite{Ko49} in 1949, on the basis of an entire fundamental system of solutions defined in \eqref{23.7.3.45B}, but the precise construction was not outlined in \cite{Ko49}. (A rigorous treatment in the case of Schr\"odinger operators with $\phi(z, \dott), \theta(z, \dott)$ analytic in an open neighborhood of the real line was presented in  \cite{GZ06} and in the context of entire functions in \cite{EGNT13}, \cite{KST12}, \cite{KT11}.) 

Next, we will rewrite $\psi_{\beta_0,+}(z,\dott)$ in terms of the entire fundamental system $\phi_{\alpha_0}(z, \dott), \theta_{\alpha_0}(z, \dott)$. Dropping a $z$-dependent (but $x$-independent) factor then finally leads to the singular Weyl--Titchmarsh--Kodaira $m$-function, $m_{\alpha_0,\beta_0}(\dott)$, 
\begin{align} 
\begin{split}
\psi_{\alpha_0,\beta_0} (z,x) &= C_{\alpha_0,\beta_0}(z) \psi_{\beta_0,+} (z,x)   \\
&= \theta_{\alpha_0}(z,x) + m_{\alpha_0,\beta_0} (z) \phi_{\alpha_0}(z,x), \quad 
z \in \bbC \backslash \sigma(T_{\alpha_0,\beta_0,}),     \lb{23.7.3.46A}
\end{split} 
\end{align}
for an appropriate $x$-independent factor $C_{\alpha_0,\beta_0}(\dott)$, which is analytic and nonvanishing on $\bbC \backslash \bbR$.

One can show, as usual, that $m_{\alpha_0,\beta_0} (\dott)$ is analytic on $\bbC \backslash \bbR$ and that 
\begin{equation}
m_{\alpha_0,\beta_0} (z) = \ol{m_{\alpha_0,\beta_0} (\ol{z})}, \quad z \in \bbC_+.
\end{equation}
Moreover, $m_{\alpha_0,\beta_0}$ is a generalized Nevanlinna--Herglotz function (cf.\ \cite{KST12}), and an 
analog of the Stieltjes inversion formula applied to $m_{\alpha_0,\beta_0}$ yields the spectral function 
$\rho_{\alpha_0,\beta_0}$ associated with $T_{\alpha_0, \beta_0}$ (see \cite{EGNT13}, \cite{GZ06}, \cite{KST12}). Even though $m_{\alpha_0,\beta_0}$, and hence, $\rho_{\alpha_0,\beta_0}$, is nonunique, the measure equivalence class generated by the spectral function $\rho_{\alpha_0,\beta_0}$ is unique and hence the spectrum (and its subdivisions) are related to the singularity structure of $m_{\alpha_0,\beta_0}$ on the real line (again, see 
\cite{EGNT13}, \cite{GZ06}, \cite{KST12}). 

In case $\tau$ is in the limit circle case at $a$ and in the limit point case at $b$, one simply drops the $\beta_0$-dependence of all quantities.  

The normalization chosen in \eqref{23.7.3.44AA}, \eqref{23.7.3.44BB} combined with \eqref{23.7.3.46A} readily implies
\begin{equation}
m_{\alpha_0,\beta_0} (z) = 
\wti \psi_{\alpha_0,\beta_0}^{\, \prime} (z,a) \cos(\alpha_0) - \wti \psi_{\alpha_0,\beta_0} (z,a) \sin(\alpha_0), 
\quad z \in \bbC \backslash \sigma(T_{\alpha_0, \beta_0}),
\end{equation}
a result familiar from the special case where $a$ is a regular endpoint (employing the fact \eqref{23.4.5.23}). This illustrates once more that the generalized boundary values \eqref{23.7.3.34A}, \eqref{23.7.3.34B}, in the context of a singular endpoint $a$, are natural extensions of the familiar boundary values in the case of regular endpoints.

Fixing the boundary condition indexed by $\beta_0 \in [0, \pi)$ (if any), and varying the boundary condition at the left endpoint $a$ then yields the standard linear fractional transformation 
\begin{align}
\begin{split} 
m_{\alpha_{1}, \beta_0}(z) =\frac{-\sin(\alpha_{1}-\alpha_{0}) +
\cos(\alpha_{1}-\alpha_{0}) m_{\alpha_{0},\beta_0}(z)}
{\cos(\alpha_{1}-\alpha_{0}) +\sin(\alpha_{1}-\alpha_{0})
m_{\alpha_{0}, \beta_0}(z)},& \\ 
\alpha_1, \alpha_0, \beta_0 \in [0,\pi), \; z \in \bbC \backslash \bbR.&
\end{split}  
\end{align}

We conclude this section with an interesting observation when $\tau$ is in the limit circle case at $a$. In this situation,  Hypothesis \ref{23.h7.3.11a} is always satisfied, and one concludes (see \cite{EGNT13}, \cite{KT11}) that 
\begin{equation}
\f{\Im(m_{\alpha_0,\beta_0} (z))}{\Im(z)} = \int_a^b r(x)dx \, |\psi_{\alpha_0,\beta_0} (z,x)|^2 > 0, \quad 
z \in \bbC \backslash \sigma(T_{\alpha_0, \beta_0}).     \lb{23.5.12} 
\end{equation}
In particular, in this special case $m_{\alpha_0,\beta_0} (\dott)$ is actually a Nevanlinna--Herglotz function (also called a Pick function).

\section{Some Examples} \lb{23.s6}

The following examples illustrate Theorem \ref{23.t7.3.11} in several representative cases, including the Bessel operator on $(0,\infty)$, the Legendre operator on $(-1,1)$, and the Kummer operator on $(0,\infty)$.

As the Bessel operator has been studied in numerous sources, we confine ourselves to a fairly short treatment in this case. The cases of the Legendre and Kummer operators received somewhat less attention in the literature and hence we discuss them in more detail, including the explicit derivation of the underlying $m$-function. 

We start with the Bessel operator in $L^2((0,\infty); dx)$ (see, e.g., \cite[p.~544--547]{AG81}, \cite{AB16}, \cite{AB15}, \cite{Br84}, \cite{BDG11}, \cite{BG85}, \cite{DR18}, \cite[p.~1532--1536]{DS88}, \cite[Sect.~12]{Ev05}, \cite{EK07}, \cite{Fu08}, \cite{FL10}, \cite{GP79}, \cite{GP84}, \cite{GZ06}, \cite[Sect.~7.2]{GTV12}, \cite{KLP06}, \cite{KT11}, \cite{Na74}, \cite{NZ92}, \cite{Pi77}, \cite{Pi79}, \cite{Re43}, \cite{Re51}, \cite[p.~81--90]{Ti62}, \cite[p.~246, 278]{Ze05}; some of these references consider subintervals of 
$(0,\infty)$): 

\subsection{The Bessel Equation on $(0,\infty)$}\lb{23.e7.3.12}
Let $a=0$, $b = \infty$, 
\begin{equation}
p(x) = r(x) = 1, \quad q (x) := q_{\ga}(x) = \f{\gamma^2 - (1/4)}{x^2}, \quad \gamma \in [0, 1), \; x\in(0,\infty).  
\end{equation}
Then $\tau_{\ga} = - d^2/dx^2 +\big[\gamma^2 - (1/4)\big] x^{-2}$, $\ga \in [0,1)$, $x \in (0,\infty)$, 
is in the limit circle case at the endpoint $0$ and in the limit point case at $\infty$. By \eqref{23.4.5.23} it suffices to focus on  the generalized boundary values at the singular endpoint $x = 0$. To this end, we introduce principal and nonprincipal solutions $u_{0,\ga}(0, \dott)$ and $\hatt u_{0,\ga}(0, \dott)$ of $\tau_{\gamma} u = 0$ at $x=0$ by
\begin{align}
u_{0,\ga}(0, x) &= x^{(1/2) + \ga}, \; \ga \in [0,1), \; x \in (0,1),   \\
\hatt u_{0,\ga}(0, x) &= \begin{cases} (2 \ga)^{-1} x^{(1/2) - \ga}, & \ga \in (0,1), \\
x^{1/2} \ln(1/x), & \ga =0;  \end{cases} \quad x \in (0,1).
\end{align}
The generalized boundary values for $g \in \dom(T_{max,\ga})$ $($the maximal operator 
associated with $\tau_{\gamma}$$)$ are then of the form
\begin{align}
\begin{split} 
\wti g(0) &= - W(u_{0,\ga}(0, \dott), g)(0)   \\
&= \begin{cases} \lim_{x \downarrow 0} g(x)\big/\big[(2 \ga)^{-1}x^{(1/2) - \ga}\big], & \ga \in (0,1), \\[1mm]
\lim_{x \downarrow 0} g(x)\big/\big[x^{1/2} \ln(1/x)\big], & \ga =0, 
\end{cases} 
\end{split} \\
\begin{split} 
\wti g^{\, \prime} (0) &= W(\hatt u_{0,\ga}(0, \dott), g)(0)   \\
&= \begin{cases} \lim_{x \downarrow 0} \big[g(x) - \wti g(0) (2 \ga)^{-1} x^{(1/2) - \ga}\big]\big/x^{(1/2) + \ga}, 
& \ga \in (0,1), \\[1mm]
\lim_{x \downarrow 0} \big[g(x) - \wti g(0) x^{1/2} \ln(1/x)\big]\big/x^{1/2}, & \ga =0.
\end{cases}
\end{split} 
\end{align}
Moreover, choosing $\alpha_0 = 0$ for simplicity, one obtains 
\begin{align}
& \phi_0(z,x;\gamma) = \begin{cases} \pi 2^{\gamma} \gamma \Gamma(1-\gamma)^{-1} [\sin(\pi \gamma)]^{-1} z^{- \gamma/2} x^{1/2} J_{\gamma}\big(z^{1/2} x\big), & \gamma \in (0,1), \\[1mm]
x^{1/2} J_0\big(z^{1/2} x\big), & \gamma = 0, 
\end{cases}    \no \\
& \hspace*{8.05cm} z \in \bbC, \; x \in (0,\infty),    \\
& \theta_0(z,x;\gamma) = \begin{cases} 2^{-\gamma - 1} \gamma^{-1} \Gamma(1 - \gamma) 
z^{\gamma/2} x^{1/2} 
J_{-\gamma}\big(z^{1/2} x\big), & \gamma \in (0,1), \\[1mm]
(\pi/2) x^{1/2} \big[- Y_0\big(z^{1/2} x\big) 
+ F(z) J_0\big(z^{1/2} x\big)\big], & \gamma =0, 
\end{cases}      \no \\ 
& \hspace*{7.2cm} z \in \bbC, \; x \in (0,\infty),    \lb{23.7.3.49C} \\
& W(\theta_0(z,\dott;\gamma), \phi_0(z,\dott;\gamma)) =1, \quad z \in \bbC, \\
& \psi_0(z,x;\gamma) = \begin{cases} i 2^{-\gamma -1} \gamma^{-1} \Gamma(1 - \gamma) [\sin(\pi \gamma)]^{-1} 
z^{\gamma/2} x^{1/2} H^{(1)}_{\gamma}\big(z^{1/2} x\big), & \gamma \in (0,1), \\[1mm]
i (\pi/2) x^{1/2} H^{(1)}_0\big(z^{1/2} x\big), & \gamma = 0, 
\end{cases}    \no \\
& \hspace*{6.8cm} z \in \bbC \backslash [0,\infty), \; x \in (0,\infty).  \\
\begin{split} 
& m_0(z;\gamma) = \begin{cases} - e^{-i \pi \gamma} 2^{- 2 \gamma - 1} \gamma^{-1} 
[\Gamma(1 - \gamma)/\Gamma(1+\gamma)] z^{\gamma}, & \gamma \in (0,1), \\[1mm]
i (\pi/2) + \ln(2) - \gamma_{E} - 2^{-1} \ln(z), & \gamma = 0,    
\end{cases}    \lb{23.7.3.49A} \\
& \hspace*{5.65cm} z \in \bbC \backslash [0,\infty), \; x \in (0,\infty).  
\end{split} 
\end{align}

Here $J_{\nu}(\dott), Y_{\nu}(\dott)$ are the standard Bessel functions of order $\nu \in \bbR$, 
$H_{\nu}^{(1)}(\dott)$ is the Hankel function of the first kind and of order $\nu$ (cf.\ \cite[Ch.~9]{AS72}), 
we abbreviated 
\begin{equation}
F(z) = \pi^{-1} \ln(z) - 2 \pi^{-1} \ln(2) + 2 \pi^{-1} \gamma_{E}, 
\end{equation}
in \eqref{23.7.3.49C}, $\Gamma(\dott)$ denotes the Gamma function (cf.\ \cite[Ch.~6]{AS72}), and $\gamma_{E} = 0.57721\dots$ represents Euler's constant $($see, e.g., \cite[Ch.~6]{AS72}$)$.

In particular, the result \eqref{23.7.3.49A} coincides with that obtained in \cite{EK07}. In the limit point case where 
$\gamma \geq 1$, one obtains (cf.\  \cite{Fu08}, \cite{FL10}, \cite{GZ06})
\begin{equation}
 m_0(z;\gamma) = \begin{cases} - C_{\gamma} e^{- i \pi \gamma} (2/\pi) \sin(\pi \gamma) z^{\gamma}, 
 & \gamma \in [1,\infty) \backslash \bbN, \\
 C_0 (2/\pi) z^n [i - (1/\pi) \ln(z)], & \gamma =n, \; n \in \bbN, 
 \end{cases} \quad z \in \bbC \backslash [0,\infty).      \lb{23.7.3.49B}
\end{equation}
One confirms that while \eqref{23.7.3.49A} represents a Nevanlinna--Herglotz function in the limit circle case  
$\gamma \in [0,1)$, the limit point case $\gamma \geq 1$ naturally leads to a generalized Nevanlinna--Herglotz function in \eqref{23.7.3.49B}.

Next, we turn to the Legendre operator in $L^2((-1,1); dx)$ (see, e.g., \cite[p.~535--543]{AG81}, \cite[p.~231--236]{BEL15}, \cite[p.~1520--1526]{DS88}, \cite{EL05}, \cite[Sect.~19]{Ev05}, \cite{Fu82}, \cite{Ka92}, \cite{LZ11}, \cite{Mo81}, \cite{NZ92}, 
\cite[p.~75--81]{Ti62}, \cite[pp.~157, 194, 248, 273--277]{Ze05}; some of these references discuss intervals different from $(-1,1)$). 

\subsection{The Legendre Equation on $(-1,1)$} \lb{23.e7.3.13}
Let $a=-1$, $b = 1$, 
\begin{equation}
p(x) = 1 - x^2, \quad  r(x) = 1, \quad q (x) = 0, \quad x\in(-1,1).  
\end{equation}
Then $\tau_{Leg} = - (d/dx) (1 - x^2) (d/dx)$, $x \in (-1,1)$, is in the limit circle case and singular at both endpoints $\pm 1$. 
Principal and nonprincipal solutions $u_{\pm 1, Leg}(0, \dott)$ and $\hatt u_{\pm 1, Leg}(0, \dott)$ of 
$\tau_{Leg} u = 0$ at $\pm 1$ 
are then given by
\begin{align}
u_{\pm 1, Leg} (0,x) = 1, \quad \hatt u_{\pm 1,Leg} (0,x) = 2^{-1} \ln((1-x)/(1+x)), \quad x \in (-1,1). 
\end{align}
The generalized boundary values for $g \in \dom(T_{max, Leg})$ $($the maximal operator 
associated with $\tau_{Leg}$$)$ are then of the form 
\begin{align}
\begin{split} 
\wti g(\pm 1) &= - W(u_{\pm 1, Leg}(0, \dott), g)(\pm 1)    \lb{23.7.3.50} \\
&= - (p g')(\pm 1) 
= \lim_{x \to \pm 1} g(x)\big/\big[2^{-1} \ln((1-x)/(1+x))\big] ,   
\end{split} \\ 
\begin{split} 
\wti g^{\, \prime} (\pm 1) &= W(\hatt u_{\pm 1, Leg}(0, \dott), g)(\pm 1)  \\
&= \lim_{x \to \pm 1} \big[g(x) - \wti g(\pm 1) 2^{-1} \ln((1-x)/(1+x))\big]. \lb{23.7.3.6.16A}
\end{split} 
\end{align}

One observes the curious fact that by combining \eqref{23.7.3.42} and \eqref{23.7.3.50}, the Friedrichs extension $T_{F,Leg}$ of 
$T_{min, Leg}$ $($the minimal operator associated with $\tau_{Leg}$$)$ then satisfies the boundary conditions
\begin{equation}
(p g')(- 1) = (p g')(1) = 0, 
\end{equation}
which resembles the Neumann $($and not the Dirichlet\,$)$ boundary conditions in the context of a regular 
Sturm--Liouville differential expression on the interval $[-1,1]$. However, since $\tau_{Leg}$ is singular at both 
endpoints $\pm 1$, this represents no conundrum.  

In addition, we note that the spectrum of $T_{F,Leg}$ may be computed explicitly based on \cite[Sect.~9\,$(i)$]{Ev05},
\begin{equation}
\sigma(T_{F,Leg}) = \{n^2+n\}_{n\in \bbN_0}.   \lb{23.7.3.6.18}
\end{equation}

Next, we compute the Weyl--Titchmarsh function corresponding to the Friedrichs extension $T_{F,Leg}$.  We begin by determining the solutions $\phi_0(z,\,\cdot\,)$ and $\theta_0(z,\,\cdot\,)$ of $\tau_{Leg}u=zu$, $z\in \bbC$, corresponding to $\alpha_0=0$ in \eqref{23.7.3.44AA} and \eqref{23.7.3.44BB}.  That is, $\phi_0(z,\,\cdot\,)$ and 
$\theta_0(z,\,\cdot\,)$, $z\in \bbC$, satisfy
\begin{equation}\lb{23.7.3.6.18A}
\tau_{Leg} u = zu
\end{equation}
subject to the conditions
\begin{align}
\wti \phi_0(z,-1)&=0,\quad\quad \wti \phi_0'(z,-1)=1,\lb{23.7.3.6.19A}\\
\wti \theta_0(z,-1)&=1,\quad\ \ \, \, \wti \theta_0'(z,-1)=0.\lb{23.7.3.6.20A}
\end{align}
For fixed $z\in \bbC$, the equation in \eqref{23.7.3.6.18A} is a Legendre equation of the form 
\begin{equation}
\big(1-x^2\big) w''(x) - 2 x w'(x) + \Big[\nu(\nu+1) - \mu^2 \big(1 - x^2\big)^{-2}\Big]w(x) = 0, 
\quad x \in (-1,1), 
\end{equation}
see, \cite[8.1.1]{AS72}, with 
\begin{equation}
\mu=0, \quad \nu=\nu(z):= 2^{-1}\big[-1 + (1+4z)^{1/2} \big],
\end{equation}
and we agree to choose the square root branch such that
\begin{equation}
\nu(z) \in \bbC \backslash \{- \bbN\}. 
\end{equation}

Therefore, linearly independent solutions to \eqref{23.7.3.6.18A} are $P_{\nu(z)}(\,\cdot\,)$ and $Q_{\nu(z)}(\,\cdot\,)$, the Legendre functions of the first and second kind of degree $\nu(z)$, respectively $($cf., e.g., \cite[Ch.~8]{AS72}$)$.  In particular,
\begin{equation}
\begin{split}
\phi_0(z,x) &= c_{\phi,P}(z)P_{\nu(z)}(x) + c_{\phi,Q}(z)Q_{\nu(z)}(x),  \\
\theta_0(z,x) &= c_{\theta,P}(z)P_{\nu(z)}(x) + c_{\theta,Q}(z)Q_{\nu(z)}(x),\quad z\in \bbC, \; x\in (-1,1),  \lb{23.7.3.6.22A}
\end{split}
\end{equation}
for an appropriate set of scalars $c_{\phi,P}(z),c_{\phi,Q}(z),c_{\theta,P}(z),c_{\theta,Q}(z)\in \bbC$.  The representation for $\phi_0(z,\,\cdot\,)$ in \eqref{23.7.3.6.22A} and the initial conditions in \eqref{23.7.3.6.19A} yield the following system of equations for the coefficients $c_{\phi,P}(z)$ and $c_{\phi,Q}(z)$:
\begin{equation}
\begin{cases}
0&= c_{\phi,P}(z)\wti P_{\nu(z)}(-1) + c_{\phi,Q}(z)\wti Q_{\nu(z)}(-1)\\
1&= c_{\phi,P}(z)\wti P_{\nu(z)}'(-1) + c_{\phi,Q}(z)\wti Q_{\nu(z)}'(-1),
\end{cases}
\end{equation}
so that
\begin{align}
c_{\phi,P}(z) &= \frac{-\wti Q_{\nu(z)}(-1)}{\wti P_{\nu(z)}(-1)\wti Q_{\nu(z)}'(-1) - \wti P_{\nu(z)}'(-1)\wti Q_{\nu(z)}(-1)},\\
c_{\phi,Q}(z) &= \frac{\wti P_{\nu(z)}(-1)}{\wti P_{\nu(z)}(-1)\wti Q_{\nu(z)}'(-1) - \wti P_{\nu(z)}'(-1)\wti Q_{\nu(z)}(-1)}.
\end{align}
Analogously, one determines
\begin{align}
c_{\theta,P}(z) &= \frac{\wti Q_{\nu(z)}'(-1)}{\wti P_{\nu(z)}(-1)\wti Q_{\nu(z)}'(-1) - \wti P_{\nu(z)}'(-1)\wti Q_{\nu(z)}(-1)},\\
c_{\theta,Q}(z) &= \frac{-\wti P_{\nu(z)}'(-1)}{\wti P_{\nu(z)}(-1)\wti Q_{\nu(z)}'(-1) - \wti P_{\nu(z)}'(-1)\wti Q_{\nu(z)}(-1)}.
\end{align}

For $z\in \rho(T_{F,Leg})$, the Weyl--Titchmarsh function $m_{0,L}(\,\cdot\,)$ is uniquely determined by the requirement that the function $\psi_{0,Leg}(z,\,\cdot\,)$ defined by
\begin{equation}\lb{23.7.3.6.29A}
\psi_{0,Leg}(z,x) = \theta_0(z,x) + m_{0,Leg}(z)\phi_0(z,x),\quad x\in (-1,1),
\end{equation}
satisfies the condition
\begin{equation}\lb{23.7.3.6.30A}
\wti\psi_{0,Leg}(z,1) = 0.
\end{equation}
In view of \eqref{23.7.3.6.29A}, the condition in \eqref{23.7.3.6.30A} then implies
\begin{equation}\lb{23.7.3.6.31A}
m_{0,Leg}(z) = -\frac{\wti \theta_0(z,1)}{\wti \phi_0(z,1)},\quad z\in \rho(T_{F,Leg}).
\end{equation}
Note that $\wti \phi_0(z,1)\neq 0$ for $z\in \rho(T_{F,Leg})$; otherwise, $z$ would be an eigenvalue of 
$T_{F,Leg}$ with $\phi_0(z,\,\cdot\,)$ a corresponding eigenfunction.  The expansions in \eqref{23.7.3.6.22A} imply
\begin{align}
m_{0,Leg}(z) &= -\frac{c_{\theta,P}(z)\wti P_{\nu(z)}(1) + c_{\theta,Q}(z)\wti Q_{\nu(z)}(1)}{c_{\phi,P}(z)\wti P_{\nu(z)}(1) + c_{\phi,Q}(z)\wti Q_{\nu(z)}(1)}\no\\
&= \frac{\wti Q_{\nu(z)}'(-1)\wti P_{\nu(z)}(1) - \wti P_{\nu(z)}'(-1)\wti Q_{\nu(z)}(1)}{\wti Q_{\nu(z)}(-1)\wti P_{\nu(z)}(1) - \wti P_{\nu(z)}(-1)\wti Q_{\nu(z)}(1)},\quad z\in \rho(T_{F,Leg}).\lb{23.7.3.6.32A}
\end{align}

Applying \eqref{23.7.3.6.16A} and the limiting behavior of $P_{\nu(z)}(x)$ as $x\uparrow 1$ 
$($cf., e.g., \cite[Sect.~3.9.2(4)]{EMOT53}$)$, one computes
\begin{align}
\wti P_{\nu(z)}(1) &= \lim_{x\uparrow 1} \frac{2P_{\nu(z)}(x)}{\ln((1-x)/(1+x))}\no\\
&= \lim_{x\uparrow 1} \frac{2}{\ln((1-x)/(1+x))} = 0,\quad z\in \rho(T_{F,Leg}), \lb{23.7.3.6.33A}
\end{align}
In light of \eqref{23.7.3.6.33A}, the expression for $m_{0,Leg}(z)$ in \eqref{23.7.3.6.32A} simplifies to
\begin{equation}\lb{23.7.3.6.35A}
m_{0,Leg}(z) = \frac{\wti P_{\nu(z)}'(-1)}{\wti P_{\nu(z)}(-1)},\quad z\in \rho(T_{F,Leg}).
\end{equation}

The limiting behavior of $P_{\nu(z)}(x)$ as $x\downarrow -1$ can be obtained from the formula (see, 
\cite[p.~198, eq.~(8.16)]{Te96}),
\begin{align}
P_{\nu}(x) &= \f{1}{\Gamma(- \nu) \Gamma(1 + \nu)} 
\sum_{n \in \bbN_0} \f{(-\nu)_n (1+\nu)_n}{[n!]^2} 2^{-n} (1+x)^n   \no \\
& \quad \times [2 \psi(1+n) - \psi(n - \nu) - \psi(n+1+\nu) - \ln((1+x)/2))],    \lb{23.7.3.6.35} \\ 
& \hspace*{6.4cm} x \in (-1,1), \; \nu \in \bbC,  \no 
\end{align} 
where $(\alpha)_n = \Gamma(\alpha + n)/\Gamma(\alpha)$, $n \in \bbN_0$.  
At first sight \eqref{23.7.3.6.35} appears to have possible singularities as $\nu \to m \in \bbZ$, but closer inspection with the help of properties of the Digamma function, $\psi(\dott) = \Gamma'(\dott)/\Gamma(\dott)$ (cf.\ \cite[Ch.~6]{AS72}), reveals that
\begin{align}
P_{\nu}(x) &= \sum_{k \in \bbN_0} \f{\Gamma(-\nu + k) \Gamma(\nu + k + 1)}{\Gamma(-\nu)^2 \Gamma(\nu+1)^2 [k!]^2}
[2 \psi(k+1) - \ln((1+x)/2)]2^{-k} (1+x)^k   \no \\
& \quad - \sum_{k \in \bbN_0} \f{\Gamma'(-\nu + k) \Gamma(\nu + k + 1)}{\Gamma(-\nu)^2 \Gamma(\nu+1)^2 [k!]^2}
2^{-k} (1+x)^k   \no \\
& \quad - \sum_{k \in \bbN_0} \f{\Gamma'(-\nu + k) \Gamma'(\nu + k + 1)}{\Gamma(-\nu)^2 \Gamma(\nu+1)^2 [k!]^2}
2^{-k} (1+x)^k, \quad x \in (-1,1),   \lb{23.7.3.6.36}
\end{align}
and hence as $\nu \to n \in \bbN_0$, only the 2nd term on the right-hand side of \eqref{23.7.3.6.36} yields a nonzero contribution, and as $\nu \to - n -1$, $n \in \bbN_0$, only the 3rd term on the right-hand side of \eqref{23.7.3.6.36} yields a nonzero contribution. More precisely, for $n \in \bbN$, 
\begin{equation}
\lim_{\nu \to n}P_{\nu}(x) = - \sum_{k=0}^n \bigg[\lim_{\nu \to n} 
\f{\Gamma'(-\nu +k)}{\Gamma(-\nu)^2}\bigg] \f{(n+k)!}{[n!]^2[k!]^2} 2^{-k} (1+x)^k, \quad x \in (-1,1),
\end{equation}
and 
\begin{equation}
\lim_{\nu \to - n -1} P_{\nu}(x) = - \sum_{k=0}^n \bigg[\lim_{\nu \to -n-1} 
\f{\Gamma'(\nu+k+1)}{\Gamma(\nu+1)^2}\bigg] \f{(n+k)!}{[n!]^2 [k!]^2} 2^{-k} (1+x)^k, \quad x \in (-1,1).
\end{equation}
Utilizing the fact that $\Gamma(\dott)$ is meromorphic with first-order poles at $z = - m$, $m \in \bbN_0$, with residue 
$(-1)^m/[m!]$, one obtains
\begin{equation}
\lim_{\nu \to n} 
 \f{\Gamma'(-\nu +k)}{\Gamma(-\nu)^2} = (-1)^{n-k-1} \f{[n!]^2}{(n-k)!} = \lim_{\nu \to -n-1} 
\f{\Gamma'(\nu+k+1)}{\Gamma(\nu+1)^2}, \quad 0 \leq k \leq n, 
\end{equation} 
implying 
\begin{align}
\lim_{\nu \to n}P_{\nu}(x) = \lim_{\nu \to - n -1} P_{\nu}(x) 
& = (-1)^n \sum_{k=0}^n (-1)^k \f{(n+k)!}{(n-k)!} [k!]^{-2} 2^{-k} (1+x)^k  \no \\
& = P_n(x), \quad x \in (-1,1). 
\end{align}
Here the last equality follows from \cite[No.~18.5.7]{OLBC10}, taking $\alpha=\beta = 0$, changing $x$ to $-x$, and utilizing $P_n(-x) = (-1)^n P_n(x)$, $x \in (-1,1)$. 

Formula \eqref{23.7.3.6.35} implies 
\begin{align}
& P_{\nu}(x) \underset{x \downarrow -1}{=} \pi^{-1} \sin(\nu \pi)[\ln((1+x)/2) + 2 \gamma_E 
+ 2 \psi(1+\nu) +  \Oh((1+x)\ln(1+x))]    \no \\
& \hspace*{1.65cm} + \cos(\nu \pi)[1 + \Oh(1+x)], \quad \nu \in \bbC,  \lb{23.7.3.6.35B}
\end{align} 
with $\gamma_E = .57721...$ Euler's constant (cf.\ \cite[Ch.~6]{AS72})\footnote{Incidentally, \eqref{23.7.3.6.35B} corrects a misprint in \cite[Sect.~3.9.2(15), p.~164]{EMOT53} and \cite[Sect.~4.8, p.~197]{MOS66}, where $\gamma_E$ instead of $2 \gamma_E$ appears. In the context of \cite[p.~197]{MOS66} this has been pointed out in \cite[p.~1710]{Sz13}. Moreover, the remainder term $\Oh(1-x)$ in \cite[No.~14.8.3]{OLBC10} must be replaced by $\Oh((1-x)\ln(1-x))$ and at the point in time this manuscript was written, the latter fact also applied to the online version at https://dlmf.nist.gov/14.8.}.  
Thus, 
\begin{align}
& \wti P_{\nu(z)}(-1) = \lim_{x\downarrow -1} \frac{2P_{\nu(z)}(x)}{\ln((1-x)/(1+x))}\no\\
& \quad = \lim_{x\downarrow -1} \frac{2\{ \cos(\nu(z)\pi) + \pi^{-1} \sin(\nu(z)\pi) [2 \gamma_E 
+ 2 \psi(1+\nu(z)) + \ln((1+x)/2)]\}}{\ln((1-x)/(1+x))}\no\\
& \quad = - 2 \pi^{-1} \sin(\nu(z)\pi),  \quad z\in \rho(T_{F,Leg}).\lb{23.7.3.6.36A}
\end{align}
As a consequence of \eqref{23.7.3.6.36A}, one applies \eqref{23.7.3.6.16A} and the limiting behavior of $P_{\nu(z)}(x)$ as $x\downarrow -1$ to compute
\begin{align}
\wti P_{\nu(z)}'(-1) &= \lim_{x\downarrow -1} \big[P_{\nu(z)}(x) - \wti P_{\nu(z)}(-1)2^{-1}\ln((1-x)/(1+x))\big]\no\\
&= \lim_{x\downarrow -1} [P_{\nu(z)}(x) + \pi^{-1} \sin(\nu(z)\pi)\ln((1-x)/(1+x))]     \no\\
&= \lim_{x\downarrow -1} \{\cos(\nu(z)\pi) + \pi^{-1} \sin(\nu(z)\pi) [2\gamma_E + 2\psi(1+\nu(z))  \no\\
&\hspace*{1.2cm} + \ln((1+x)/2)] +\pi^{-1} \sin(\nu(z)\pi) \ln((1-x)/(1+x))\}\no\\
&= \cos(\nu(z)\pi) + 2\pi^{-1} \sin(\nu(z)\pi)[\gamma_E + \psi(1+\nu(z))],\quad z\in \rho(T_{F,Leg}).\lb{23.7.3.6.37A}
\end{align}
The pole structure of $m_{0,Leg}(\dott)$ in \eqref{23.7.3.6.35A} $($cf.\ \eqref{23.7.3.6.36A}, \eqref{23.7.3.6.37A}$)$ independently verifies  
$\sigma(T_{F,Leg})$ in  \eqref{23.7.3.6.18}.  Finally, \eqref{23.7.3.6.35A}, \eqref{23.7.3.6.36A}, and \eqref{23.7.3.6.37A} combine to yield
\begin{align}
m_{0,Leg}(z) &= \frac{\cos(\nu(z)\pi) + 2\pi^{-1} \sin(\nu(z)\pi)[\gamma_E + \psi(1+\nu(z))]}{- 2 \pi^{-1} \sin(\nu(z)\pi)}\no\\
&= - (\pi/2) \cot(\nu(z)\pi) - \gamma_E - \psi(1+\nu(z)),  \quad z\in \rho(T_{F,Leg}).   \lb{23.6.38} 
\end{align}
According to a private communication by Charles Fulton \cite{Fu20}, \eqref{23.6.38} must coincide with $- A/(2B)$, where $A$ and $B$ are defined in \cite[Eqs.~(1.16), (1.17)]{Fu82}. Employing the fact \cite[No.~6.3.7]{AS72}, this indeed is instantly verified. We also note that the analog of \eqref{23.6.38} on the interval $[0,1)$ has indeed been computed in \cite[eq.~(164)]{Mo81}. This was revisited in \cite{Ka92} from an operator theoretic point of view.

Formula \eqref{23.6.38} displays a difference of Nevanlinna--Herglotz functions. To show that it actually represents a Nevanlinna--Herglotz function one can argue as follows: We start by recalling (cf.\ \cite[p.~28]{Do74})
\begin{equation}
- \pi \cot(z \pi) = \sum_{n \in \bbZ} \bigg[\f{1}{n - z} - \f{n \pi^2}{n^2 \pi^2 + 1}\bigg], 
\quad z \in \bbC \backslash \bbZ,
\end{equation}
and (cf.\ \cite[No.~6.3.16]{AS72})
\begin{equation}
\psi(1+z) = - \gamma_E + \sum_{n \in \bbN} \bigg[\f{1}{n} - \f{1}{n+z}\bigg], \quad z \in \bbC \backslash (-\bbN).
\end{equation}
Thus,
\begin{align}
m_{0,Leg}(z) &= - (\pi/2) \cot(\nu(z)\pi) - \gamma_E - \psi(1+\nu(z))    \no \\
&= \f{1}{2} \sum_{n \in \bbZ} \bigg[\f{1}{n - \nu(z)} - \f{n \pi^2}{n^2 \pi^2 + 1}\bigg] 
+ \sum_{n \in \bbN} \bigg[\f{1}{n + \nu(z)} - \f{1}{n}\bigg]   \no \\
&= - \f{1}{2 \nu(z)} + \f{1}{2} \sum_{n \in \bbN} \bigg[\f{1}{n - \nu(z)} - \f{n \pi^2}{n^2 \pi^2 + 1}\bigg]    \no \\
& \quad + \f{1}{2} \sum_{n \in \bbN} \bigg[\f{-1}{n + \nu(z)} + \f{n \pi^2}{n^2 \pi^2 + 1}\bigg]   \no \\
& \quad + \f{1}{2} \sum_{n \in \bbN} \bigg[\f{1}{n + \nu(z)} - \f{1}{n}\bigg] 
+ \f{1}{2} \sum_{n \in \bbN} \bigg[\f{1}{n + \nu(z)} - \f{1}{n}\bigg]    \no \\  
&= - \f{1}{2 \nu(z)} + \f{1}{2} \sum_{n \in \bbN} \bigg[\f{1}{n - \nu(z)} - \f{1}{n}\bigg] 
+ \f{1}{2} \sum_{n \in \bbN} \bigg[\f{1}{n + \nu(z)} - \f{1}{n}\bigg]    \no \\
&= - \f{1}{2 \nu(z)} + \f{1}{2} \sum_{n \in \bbN} \bigg[\f{1}{n - \nu(z)} - \f{1}{n}\bigg] 
+ \f{1}{2} \bigg[\f{1}{1+ \nu(z)} - 1\bigg]     \no \\
&\quad + \f{1}{2} \sum_{n=2}^{\infty} \bigg[\f{1}{n + \nu(z)} - \f{1}{n}\bigg]    \no \\
&= - \f{1}{2} \f{1}{\nu(z) [\nu(z) + 1]} - \f{1}{2} + \f{1}{2} \sum_{n \in \bbN} \bigg[\f{1}{n - \nu(z)} - \f{1}{n}\bigg] 
\no \\
& \quad + \f{1}{2} \sum_{n \in \bbN} \bigg[\f{1}{n +1 + \nu(z)} - \f{1}{n}\bigg] 
+ \f{1}{2} \sum_{n \in \bbN} \bigg[\f{1}{n} - \f{1}{n +1}\bigg]   \no \\
&= - \f{1}{2z} - \f{1}{2} 
+ \f{1}{2} \sum_{n \in \bbN} \f{1}{n (n+ 1)}    \no \\
& \quad + \f{1}{2} \sum_{n \in \bbN} \bigg[\f{1}{n + 2^{-1} - 2^{-1} (1 + 4z)^{1/2}} - \f{1}{n}\bigg]    \no \\
& \quad + \f{1}{2} \sum_{n \in \bbN} \bigg[\f{1}{n + 2^{-1} + 2^{-1} (1 + 4z)^{1/2}} - \f{1}{n}\bigg]    \no \\
&= - \f{1}{2z} + \sum_{n \in \bbN} \bigg[\f{n+ 2^{-1}}{(n+ 2^{-1})^2 - 4^{-1} -z} - \f{1}{n}\bigg]
\no \\
&= - \f{1}{2z} + \sum_{n \in \bbN} \bigg[\f{n + 2^{-1}}{n(n+1) - z} - \f{1}{n}\bigg], 
\quad z \in \bbC \backslash \{n(n+1)\}_{n \in \bbN_0}.
\end{align}
Here we used (cf.\ \cite[No.~0.2441, p.~10]{GR80})
\begin{equation}
\sum_{n \in \bbN} \f{1}{n(n+1)} = 1,
\end{equation}
and 
\begin{equation}
\nu(z)[\nu(z) + 1] = z, \quad z \in \bbC. 
\end{equation}
Once again, one confirms explicitly that the set of poles of $m_{0,Leg}(\dott)$ coincides with the spectrum of 
$T_{F,Leg}$ as recorded in \eqref{23.7.3.6.18}.

As a final example, we turn the case of the Laguerre equation, also known as the Kummer, or confluent hypergeometric equation (see, e.g. \cite[Sects.~10, 27]{Ev05}, \cite[p.~284]{Ze05}).

\subsection{The Laguerre (or Kummer, resp., Confluent Hypergeometric) Equation on $(0,\infty)$} \lb{23.e7.3.14}
Let $a=0$, $b=\infty$,
\begin{align}
\begin{split} 
p(x):= p_{\beta}(x) = x^{\beta}e^{-x},\quad q(x)=0, \quad r(x):=r_{\beta}(x)=x^{\beta-1}e^{-x},& \\
\beta\in (0,2), \; x\in (0,\infty).& 
\end{split}  
\end{align}
The corresponding differential expression is then given by
\begin{equation}
\tau_{\beta,Lag}=-x^{1-\beta}e^x\frac{d}{dx}x^{\beta}e^{-x}\frac{d}{dx},\quad x\in (0,\infty),
\end{equation}
and the underlying Hilbert space is $L^2((0,\infty);x^{\beta-1}e^{-x}\, dx)$. At $x=0$, $\tau_{\beta,Lag}$ is regular for 
$\beta \in (0,1)$ and singular for $\beta \in [1,2)$. 

For $z\in \bbC$, solutions to the Kummer equation
\begin{equation}\lb{23.6.40}
\tau_{\beta,Lag}y=zy
\end{equation}
are given by (cf., e.g., \cite[13.1.12, 13.1.13]{AS72})
\begin{align}
&y_{1,\beta}(z,x) = F(-z,\beta;x),  \quad \beta \in (0,2), \; z \in \bbC, \; x\in (0,\infty),  \\
&y_{2,\beta}(z,x) = \begin{cases} x^{1-\beta}F(1-\beta-z,2-\beta;x), & \beta \in (0,2)\backslash\{1\}, 
\; z \in \bbC, \\[1mm]
\Gamma(-z) U(-z,1;x), & \beta =1, \; z \in \bbC \backslash\{0\}, \\[1mm] 
\displaystyle{- \int_1^x dt \, t^{-1} e^t}, & \beta =1, \; z = 0, 
\end{cases}    \\
& \hspace*{7.05cm} x \in (0, \infty),    \no 
\end{align}
where $F(\dott,\,\cdot\,;\,\cdot\,)$ (also frequently denoted by $\mathstrut_1 F_1(\dott,\,\cdot\,;\,\cdot\,)$, 
or $M(\dott,\,\cdot\,;\,\cdot\,)$), denotes the confluent hypergeometric function and $U(\dott, 1; \dott)$, 
represents an associated logarithmic case (cf., e.g., \cite[Sect.~13.2]{OLBC10}).  

One notes that 
\begin{equation}
y_{1,\beta}(0,x) = 1, \quad x \in (0,\infty).    \lb{23.6.41} 
\end{equation}

Since\footnote{The case $\beta=1$ in \eqref{23.6.42a} has been misrepresented in several sources. For instance, \cite[No.~13.5.9]{AS72} entirely missed the term $-2\gamma_E$ (this has been pointed out in 
\cite[p.~780]{vHK83}) and \cite[p.~288]{MOS66} have the wrong sign of this term, namely, $+2 \gamma_E$, as has been noted in \cite[p.~777]{vHK83}; the asymptotic formula presented in \cite[No.~13.2.19]{OLBC10} is correct.}
\begin{align}\lb{23.6.42}
& y_{1,\beta}(z,x) \underset{x\downarrow 0}{=} 1 - (z/\beta) x +O\big(x^2\big), \quad \beta \in (0,2), \; z \in \bbC, \\
& y_{2,\beta}(z,x) \underset{x\downarrow 0}{=} \begin{cases} x^{1-\beta} 
\big\{[1+[(1-\beta-z)/(2-\beta)]x + O\big(x^2\big)\big\}, & \beta \in (0,2)\backslash\{1\}, \\ 
& \hspace*{1.25cm} z \in \bbC,  \\[1mm] 
- \ln(x) -[\Gamma'(-z)/\Gamma(-z)] - 2 \gamma_E + \Oh(x|\ln(x)|), & \beta = 1, \; z \in \bbC\backslash\{0\}, \\[1mm] 
- \ln(x) \big\{1 + C_0 [- \ln(x)]^{-1} + x + \Oh\big(x^2\big)\big\}, & \beta = 1, \; z = 0, 
\end{cases}     \lb{23.6.42a} 
\end{align}
where we used integration by parts to obtain
\begin{align}
& \int_x^1 dt \, t^{-1} e^t = - \ln(x) e^x + \int_0^1 dt \, t[1 - \ln(t)] e^t - \int_0^x dt \, t [1 - \ln(t)] e^t  \no \\
& \hspace*{1.75cm} = - \ln(x) \big\{1 + C_0 [-\ln(x)]^{-1} + x + \Oh\big(x^2\big)\big\}, \\ 
& \, C_0 = \int_0^1 dt \, t [1 - \ln(t)] e^t, 
\end{align}
the two solutions $y_{j,\beta}(z,\,\cdot\,)$, $j\in \{1,2\}$, are linearly independent.  Alternatively, their linear independence may be deduced from the Wronskian by applying \cite[13.1.20, 13.1.22]{AS72} as follows:
\begin{align}
&W(y_{1,\beta}(z,\,\cdot\,),y_{2,\beta}(z,\,\cdot\,))(x)\no\\
&\quad = x^{\beta}e^{-x}\big[y_{1,\beta}(z,x)y_{2,\beta}'(z,x) - y_{1,\beta}'(z,x)y_{2,\beta}(z,x)\big]\no\\
&\quad = \begin{cases} 1-\beta, & \beta\in (0,2)\backslash\{1\}, \; z\in \bbC, \\
-1, & \beta = 1, \; z \in \bbC,
\end{cases} \quad  x\in (0,\infty).    \lb{23.6.43}
\end{align}
The limiting relations in \eqref{23.6.42} and \eqref{23.6.42a} imply that for $c=c(z)\in (0,1)$ sufficiently small,
\begin{align}
\begin{split} 
& \int_0^c dx \, |y_{1,\beta}(z,x)|^2 x^{\beta-1} e^{-x} = \int_0^c dx \, |1+\Oh(x)|^2 x^{\beta-1} e^{-x} < \infty,  \\
& \hspace*{6.85cm} \beta\in (0,2), \; z \in \bbC, \lb{23.6.49A} 
\end{split} \\
\begin{split} 
& \int_0^c dx \, |y_{2,\beta}(z,x)|^2 x^{\beta-1} e^{-x} = \int_0^c dx \, x^{1-\beta}|1+\Oh(x)|^2 e^{-x} < \infty,  \\ 
& \hspace*{6.15cm} \beta\in (0,2)\backslash\{1\}, \; z \in \bbC,  \lb{23.6.49B} 
\end{split} \\
\begin{split} 
& \int_0^c dx \, |y_{2,1}(z,x)|^2 e^{-x} = \int_0^c dx \, [- \ln(x)]^2 \big|1+O\big([-\ln(x)]^{-1}\big)\big|^2 e^{-x} < \infty,  \\ 
& \hspace*{8.1cm} \beta = 1, \; z \in \bbC\backslash\{0\}.  \lb{23.6.49C} 
\end{split} 
\end{align}
Equations \eqref{23.6.49A}--\eqref{23.6.49C} imply that $\tau_{\beta, Lag}$, $\beta \in (0,2)$, is in the limit circle case at $x=0$.  Moreover, the asymptotic relation 
\begin{equation}
F(-z,b;x) \underset{x \uparrow \infty}{=} \frac{\Gamma(b)}{\Gamma(-z)}e^xx^{-b-z} 
\big[1 + \Oh\big(|x|^{-1}\big)\big], 
\quad z\in \bbC \backslash \bbN_0, \; b \in \bbC \backslash (- \bbN_0),    \lb{23.6.52A}
\end{equation}
(cf., e.g., \cite[13.1.4]{AS72}) implies
\begin{equation}
\int_0^{\infty}|y_{1,\beta}(z,x)|^2x^{\beta-1}e^{-x}\, dx = \infty, \quad \beta \in (0,2), \; z\in \bbC \backslash \bbN_0, 
\end{equation}
so that $\tau_{\beta}$, $\beta \in (0,2)$, is in the limit point case at $x=\infty$.

By \cite[Corollary~XI.6.1]{Ha02}, the fact \eqref{23.6.41} implies that the Kummer equation \eqref{23.6.40} is disconjugate (hence, nonoscillatory) on $(0,\infty)$ for $z=0$, and it follows that
\begin{equation}\lb{23.6.44}
\text{\eqref{23.6.40} is disconjugate on $(0,\infty)$ for all $\beta \in (0,2)$, $z \in (-\infty,0]$.}
\end{equation}
In light of \eqref{23.6.44}, the Kummer equation \eqref{23.6.40} possesses principal and nonprincipal solutions at $x=0$ for each $z \in (-\infty,0]$.  A principal solution of $\tau_{\beta,Lag}u=\lambda u$, $\lambda \leq 0$, at $x=0$ is given by
\begin{equation}\lb{23.6.45}
u_{0,\beta,Lag}(\lambda,\,\cdot\,)=
\begin{cases}
(1-\beta)^{-1}y_{2,\beta}(\lambda,\,\cdot\,),&\beta\in (0,1),\\[1mm] 
-(1-\beta)^{-1}y_{1,\beta}(\lambda,\,\cdot\,),&\beta\in (1,2), \\[1mm] 
y_{1,1}(\lambda,\,\cdot\,),& \beta = 1, 
\end{cases} \quad \lambda \leq 0, 
\end{equation}
and a nonprincipal solution of $\tau_{\beta,Lag}u=\lambda u$ at $x=0$ is given by
\begin{equation}\lb{23.6.46}
\widehat{u}_{0,\beta,Lag}(\lambda,\,\cdot\,)=
\begin{cases}
y_{1,\beta}(\lambda,\,\cdot\,),&\beta\in (0,1), \\[1mm] 
y_{2,\beta}(\lambda,\,\cdot\,),&\beta\in [1,2).  
\end{cases} \quad \lambda \leq 0.
\end{equation}
In fact, for $c=c(\lambda)\in (0,1)$ sufficiently small, relation \eqref{23.6.42} implies
\begin{align}
\int_0^c\frac{dx}{p_{\beta}(x)[y_{1,\beta}(\lambda,x)]^2} = \int_0^c\frac{dx}{x^{\beta}e^{-x}|1+\Oh(x)|^2}, 
\quad \lambda \leq 0, 
\end{align}
which is finite for $\beta\in (0,1)$ and infinite for $\beta\in [1,2)$, while relation \eqref{23.6.42a} implies
\begin{align}
&\int_0^c\frac{dx}{p_{\beta}(x)[y_{2,\beta}(\lambda,x)]^2}    \no \\
& \quad = 
\begin{cases} 
\displaystyle{\int_0^c dx \, x^{\beta-2}e^x |1+\Oh(x)|^{-2}}, & \beta \in (0,2) \backslash \{1\}, \; \lambda \leq 0, \\[3mm] 
\displaystyle{\int_0^c dx \, x^{-1} e^x |-\ln(x) + \Oh(1)|^{-2}}, & \beta = 1, \; \lambda < 0, \\[3mm] 
\displaystyle{\int_0^c dx\, x^{-1} e^x \bigg(\int_x^1 dt \, t^{-1} e^t\bigg)^{-2}}, & \beta = 1, \; \lambda = 0,
\end{cases}  
\end{align}
which is infinite for $\beta\in (0,1)$ and finite for $\beta\in [1,2)$.  In addition, the Wronskian relation in \eqref{23.6.43} implies
\begin{align}
&W(\widehat{u}_{0,\beta,Lag}(\lambda,\,\cdot\,),u_{0,\beta,Lag}(\lambda,\,\cdot\,))(x) = (1-\beta)^{-1}W(y_{1,\beta}(\lambda,\,\cdot\,),y_{2,\beta}(\lambda,\,\cdot\,))(x) = 1,  \lb{23.6.49} \no \\
& \hspace*{6.45cm} \beta \in (0,1), \; \lambda \leq 0, \; x\in (0,\infty),    \\
&W(\widehat{u}_{0,\beta,Lag}(\lambda,\,\cdot\,),u_{0,\beta,Lag}(\lambda,\,\cdot\,))(x) = -(1-\beta)^{-1}W(y_{2,\beta}(\lambda,\,\cdot\,),y_{1,\beta}(\lambda,\,\cdot\,))(x) = 1,    \no \\
&\hspace*{6.5cm} \beta \in (1,2), \; \lambda \leq 0, \; x\in (0,\infty),   \lb{23.6.50} \\
&W(\widehat{u}_{0,1,Lag}(\lambda,\,\cdot\,),u_{0,1,Lag}(\lambda,\,\cdot\,))(x) = 1, \quad 
\beta =1, \; \lambda \leq 0, \; x\in (0,\infty).
\end{align}
Thus, 
\begin{align}
& \lim_{x\downarrow 0} \frac{u_{0,\beta}(\lambda,x)}{\widehat{u}_{0,\beta,Lag}(\lambda,x)}= \lim_{x\downarrow 0}\frac{(1-\beta)^{-1}x^{1-\beta}[1+\Oh(x)]}{1+\Oh(x)}=0,\quad \beta\in (0,1), \; \lambda \leq 0,  \lb{23.6.51} \\
& \lim_{x\downarrow 0} \frac{u_{0,\beta,Lag}(\lambda,x)}{\widehat{u}_{0,\beta,Lag}(\lambda,x)} = \lim_{x\downarrow 0}\frac{-(1-\beta)^{-1}[1+\Oh(x)]}{x^{1-\beta}[1+\Oh(x)]}=0,\quad \beta\in (1,2), \; \lambda \leq 0,   \lb{23.6.52} \\
& \lim_{x\downarrow 0} \frac{u_{0,1,Lag}(\lambda,x)}{\widehat{u}_{0,1,Lag}(\lambda,x)} = 
\lim_{x\downarrow 0} \f{1 + \Oh(x)}{[-\ln(x)] + \Oh(1)}= 0, \quad \beta =1, \; \lambda < 0,    \lb{23.6.53} \\
& \lim_{x\downarrow 0} \frac{u_{0,1,Lag}(0,x)}{\widehat{u}_{0,1,Lag}(0,x)} = 
\lim_{x\downarrow 0} \bigg(\int_x^1 dt \, t^{-1} e^t\bigg)^{-1} = 0, \quad \beta =1, \; \lambda = 0.    \lb{23.6.54} 
\end{align}
The identities \eqref{23.6.49}--\eqref{23.6.54} confirm that the (principal/nonprincipal) solutions $u_{0,\beta,Lag}(\lambda,\,\cdot\,)$ and $\widehat{u}_{0,\beta,Lag}(\lambda,\,\cdot\,)$, $\lambda\in (-\infty,0]$, satisfy \eqref{23.2.9}--\eqref{23.2.11}.

The generalized boundary values for $g \in \dom(T_{max, \beta,Lag})$ $($the maximal operator 
associated with $\tau_{\beta,Lag}$$)$ are then of the form 
\begin{align}
\wti g(0) &= - W(u_{0,\beta,Lag}(0,\dott), g)(0) 
= \displaystyle{\lim_{x\downarrow 0}} \frac{g(x)}{\hatt u_{0,\beta,Lag}(0,x)}   
\no \\[1mm]
&=
\begin{cases}
\displaystyle{\lim_{x\downarrow 0}} \frac{g(x)}{y_{1,\beta}(0,x)},\quad \beta\in (0,1),  \\[4mm]
\displaystyle{\lim_{x\downarrow 0}} \frac{g(x)}{y_{2,\beta}(0,x)},\quad \beta\in [1,2),  
\end{cases}    \no \\[2mm]
&=
\begin{cases}
g(0),&\beta\in (0,1),\\[1mm]
\displaystyle{\lim_{x\downarrow 0}}\dfrac{g(x)}{x^{1-\beta}},&\beta\in (1,2), \\[2mm] 
\displaystyle{\lim_{x\downarrow 0}} \frac{g(x)}{[- \ln(x)]}, & \beta =1.
\end{cases} \\[2mm]
\wti g^{\, \prime} (0) &= W(\hatt u_{0,\beta,Lag}(0, \dott), g)(0) 
= \displaystyle{\lim_{x\downarrow 0}\frac{g(x)-\wti g(0)\hatt u_{0,\beta,Lag}(0,x)}{u_{0,\beta,Lag}(0,x)}} \no \\[1mm]
&=
\begin{cases}
\displaystyle{\lim_{x\downarrow 0} \frac{g(x)-\wti g(0)y_{1,\beta}(0,x)}{(1-\beta)^{-1}y_{2,\beta}(0,x)}}, 
&\beta\in (0,1),\\[4mm]
\displaystyle{\lim_{x\downarrow 0} \frac{g(x)-\wti g(0)y_{2,\beta}(0,x)}{-(1-\beta)^{-1}y_{1,\beta}(0,x)}}, 
& \beta\in (1,2), \\[4mm] 
\displaystyle{\lim_{x\downarrow 0} \frac{g(x)-\wti g(0)y_{2,1}(0,x)}{y_{1,1}(0,x)}}, & \beta =1, 
\end{cases}   \no \\[2mm]
&=
\begin{cases}
\displaystyle{\lim_{x\downarrow 0}} \dfrac{g(x)-g(0)}{(1-\beta)^{-1}x^{1-\beta}} 
= \dfrac{0}{0} = \displaystyle{\lim_{x\downarrow 0}} \dfrac{g'(x)}{x^{-\beta}} 
= g^{[1]}(0),&\beta\in(0,1),\\[4mm]
\displaystyle{(\beta - 1) \lim_{x\downarrow 0}} \big[g(x)-\wti g(0)x^{1-\beta}\big], &\beta\in (1,2),  \\[2mm] 
\lim_{x\downarrow 0} \big\{g(x) - \wti g(0) [- \ln(x)]\big\}, &\beta = 1. 
\end{cases}    \lb{23.6.67A} 
\end{align}
\vspace{10pt} 

Next, we turn to the computation of the Weyl--Titchmarsh $m$-function corresponding to the Friedrichs extension $T_{F,\beta,Lag}$ of the minimal operator $T_{min,\beta,Lag}$ generated by $\tau_{\beta,Lag}$ with the boundary condition
\begin{equation}
\dom(T_{F,\beta,Lag}) = \big\{g\in \dom(T_{max,\beta,Lag})\,\big|\, \wti g(0)=0\big\},\quad \beta\in (0,2).
\end{equation}

We begin by determining the solutions $\phi_{0,\beta}(z,\,\cdot\,)$ and $\theta_{0,\beta}(z,\,\cdot\,)$ of 
$\tau_{\beta,Lag}u=zu$, $\beta\in (0,2)$, $z\in \bbC$, corresponding to $\alpha_0=0$ in \eqref{23.7.3.44AA} and \eqref{23.7.3.44BB}.  That is, $\phi_{0,\beta}(z,\,\cdot\,)$ and $\theta_{0,\beta}(z,\,\cdot\,)$, $z\in \bbC$, satisfy $\tau_{\beta} u = zu$, 
subject to the conditions
\begin{align}
\wti \phi_{0,\beta}(z,0)&=0,\quad\quad \wti \phi_{0,\beta}'(z,0)=1, \quad \beta \in (0,2), \; z\in \bbC,  \lb{23.7.3.6.19AA}\\
\wti \theta_{0,\beta}(z,0)&=1,\quad\ \ \, \, \wti \theta_{0,\beta}'(z,0)=0, \quad \beta \in (0,2), \; z \in \bbC.    \lb{23.7.3.6.20AA}
\end{align}
Writing, for each $\beta\in (0,2)$,
\begin{align}\lb{23.6.64}
\begin{split}
\phi_{0,\beta}(z,x) &= c_{\phi,1,\beta}(z)y_{1,\beta}(z,x) + c_{\phi,2,\beta}(z)y_{2,\beta}(z,x),\\
\theta_{0,\beta}(z,x) &= c_{\theta,1,\beta}(z)y_{1,\beta}(z,x) + c_{\theta,2,\beta}(z)y_{2,\beta}(z,x),   
\end{split} \\
& \hspace*{3.5cm} z\in \bbC, \; x\in (0,\infty),   \no
\end{align}
one infers that (for $\beta\in (0,2)$, $z\in \bbC$)
\begin{align}
c_{\phi,1,\beta}(z) &= \frac{-\wti y_{2,\beta}(z,0)}{\wti y_{1,\beta}(z,0)\wti y_{2,\beta}'(z,0) - \wti y_{1,\beta}'(z,0)\wti y_{2,\beta}(z,0)},\lb{23.6.65}\\
c_{\phi,2,\beta}(z) &= \frac{\wti y_{1,\beta}(z,0)}{\wti y_{1,\beta}(z,0)\wti y_{2,\beta}'(z,0) - \wti y_{1,\beta}'(z,0)\wti y_{2,\beta}(z,0)},   \no
\end{align}
and
\begin{align}
c_{\theta,1,\beta}(z) &= \frac{\wti y_{2,\beta}'(z,0)}{\wti y_{1,\beta}(z,0)\wti y_{2,\beta}'(z,0) - \wti y_{1,\beta}'(z,0)\wti y_{2,\beta}(z,0)},\lb{23.6.66}\\
c_{\theta,2,\beta}(z) &= \frac{-\wti y_{1,\beta}'(z,0)}{\wti y_{1,\beta}(z,0)\wti y_{2,\beta}'(z,0) - \wti y_{1,\beta}'(z,0)\wti y_{2,\beta}(z,0)}.  \no
\end{align}
For $\beta\in (0,2)$, the Weyl--Titchmarsh $m$-function $m_{0,\beta,Lag}(\,\cdot\,)$ is uniquely determined by the requirement that the function
\begin{align}
\psi_{0,\beta,Lag}(z,x) := \theta_{0,\beta}(z,x) + m_{0,\beta,Lag}(z)\phi_{0,\beta}(z,x),&\lb{23.6.67}\\
\beta\in (0,2), \; z\in \rho(T_{F,\beta,Lag}), \; x\in (0,\infty),&\no
\end{align}
satisfies 
\begin{equation}\lb{23.6.68}
\psi_{0,\beta,Lag}(z,\,\cdot\,)\in L^2\big((0,\infty); x^{\beta-1}e^{-x}\, dx\big), 
\quad \beta\in (0,2), \; z\in \rho(T_{F,\beta,Lag}).
\end{equation}
We distinguish three cases $\beta\in (0,1)$, $\beta\in (1,2)$, and $\beta = 1$.

Starting with $\beta\in (0,1)$, one computes
\begin{align}
\wti y_{1,\beta}(z,0) &= \lim_{x\downarrow 0} y_{1,\beta}(z,x) = \lim_{x\downarrow 0}\big[1+\Oh(x)\big] = 1, \lb{23.6.69} \\
\wti y_{2,\beta}(z,0) &= \lim_{x\downarrow 0}y_{2,\beta}(z,x) = \lim_{x\downarrow 0} x^{1-\beta}\big[1+\Oh(x)\big] = 0,\quad \beta\in (0,1), \; z\in \bbC,   \lb{23.6.70}
\end{align}
while
\begin{align}
\wti y_{1,\beta}'(z,0) &= \lim_{x\downarrow 0} \frac{y_{1,\beta}(z,x) - \wti y_{1,\beta}(z,0)y_{1,\beta}(0,x)}{(1-\beta)^{-1}y_{2,\beta}(0,x)}   \no \\
&= \lim_{x\downarrow 0} \frac{y_{1,\beta}(z,x) - 1}{(1-\beta)^{-1}y_{2,\beta}(0,x)}\no\\
&= \lim_{x\downarrow 0} \frac{\Oh(x)}{(1-\beta)^{-1}x^{1-\beta}\big[1+\Oh(x)\big]}\no\\
&= \lim_{x\downarrow 0}\frac{\Oh(x^{\beta})}{(1-\beta)^{-1}\big[1+\Oh(x)\big]}\no\\
&= 0,\quad \beta\in (0,1), \; z\in \bbC,    \lb{23.6.71}
\end{align}
and
\begin{align}
\wti y_{2,\beta}'(z,0) &= \lim_{x\downarrow 0}\frac{y_{2,\beta}(z,x) - \wti y_{2,\beta}(z,0)y_{1,\beta}(0,x)}{(1-\beta)^{-1}y_{2,\beta}(0,x)}    \no \\
&= \lim_{x\downarrow 0} \frac{y_{2,\beta}(z,x)}{(1-\beta)^{-1}y_{2,\beta}(0,x)}\no\\
&= \lim_{x\downarrow 0} \frac{\big[1+\Oh(x)\big]}{(1-\beta)^{-1}\big[1+\Oh(x)\big]}\no\\
&= 1-\beta,\quad \beta\in (0,1), \; z\in \bbC.    \lb{23.6.72}
\end{align}
By \eqref{23.6.64}--\eqref{23.6.66} and \eqref{23.6.69}--\eqref{23.6.72}, one obtains
\begin{align}
\phi_{0,\beta}(z,x) &= (1-\beta)^{-1}y_{2,\beta}(z,x),\\
\theta_{0,\beta}(z,x) &= y_{1,\beta}(z,x),\quad \beta\in (0,1), \; z\in \bbC, \; x\in (0,\infty).
\end{align}
Therefore, the requirement in \eqref{23.6.68} can be recast as
\begin{align}
\big[y_{1,\beta}(z,\,\cdot\,) + m_{0,\beta,Lag}(z)(1-\beta)^{-1}y_{2,\beta}(z,\,\cdot\,)\big]\in L^2\big((0,\infty);x^{\beta-1}e^{-x}\, dx\big),&\\
\beta\in (0,1), \; z\in \rho(T_{F,\beta,Lag}),&\no
\end{align}
or as,  
\begin{align}
\int^{\infty}\big| y_{1,\beta}(z,x) + m_{0,\beta,Lag}(z)(1-\beta)^{-1}y_{2,\beta}(z,x)\big|^2 x^{\beta-1}e^{-x}\, dx<\infty,&\lb{23.6.76}\\
\beta\in (0,1), \; z\in \rho(T_{F,\beta,Lag}).&\no
\end{align}
The asymptotic relation \eqref{23.6.52A} then implies for each fixed $\beta\in (0,1)$ and $z\in \rho(T_{F,\beta,Lag})$, 
that the integrand
\begin{equation}
\big| y_{1,\beta}(z,x) + m_{0,\beta,Lag}(z)(1-\beta)^{-1}y_{2,\beta}(z,x)\big|^2 x^{\beta-1}e^{-x}
\end{equation}
in \eqref{23.6.76} behaves at $\infty$ like
\begin{equation}
\bigg|\frac{\Gamma(\beta)}{\Gamma(-z)} + m_{0,\beta,Lag}(z) (1-\beta)^{-1}\frac{\Gamma(2-\beta)}{\Gamma(1-\beta-z)}\bigg|^2x^{-(2\Re(z)+\beta+1)}e^x.   \lb{23.6.79} 
\end{equation}
The expression \eqref{23.6.79} is integrable near $\infty$ with respect to Lebesgue measure $dx$ if and only if
\begin{equation}
\frac{\Gamma(\beta)}{\Gamma(-z)} + m_{0,\beta,Lag}(z) (1-\beta)^{-1}\frac{\Gamma(2-\beta)}{\Gamma(1-\beta-z)} = 0,
\end{equation}
so that
\begin{align}\lb{23.6.81}
\begin{split}
m_{0,\beta,Lag}(z) = \frac{(\beta-1)\Gamma(\beta)\Gamma(1-\beta-z)}{\Gamma(2-\beta)\Gamma(-z)} 
= - \frac{\Gamma(\beta)\Gamma(1-\beta-z)}{\Gamma(1-\beta)\Gamma(-z)},& \\ 
 \beta\in (0,1), \; z\in \rho(T_{F,\beta,Lag}),& 
\end{split} 
\end{align} 
in agreement with \cite[eq.~(6)]{De98} and the choice of $\Gamma_0, \Gamma_1$ therein. 

To verify that $m_{0,\beta,Lag}(\dott)$ is a Nevanlinna--Herglotz function one can argue as follows following 
\cite[p.~1, 5]{EMOT53}. Starting from the celebrated formula
\begin{equation}
\Gamma(z)^{-1} = z e^{\gamma_E z} \prod_{n \in \bbN} [1 + (z/n)] e^{-z/n}, \quad z \in \bbC,
\end{equation}
one derives
\begin{align}
\begin{split}
\f{\Gamma(\zeta_1)}{\Gamma(\zeta_1 + \zeta_2)} = [1 + (\zeta_2/\zeta_1)] e^{\gamma_E \zeta_2} 
\prod_{n \in \bbN} \bigg[1 + \f{\zeta_2}{n + \zeta_1}\bigg] e^{- \zeta_2/n},& \\
\zeta_1 \in \bbC \backslash \{- \bbN_0\}, \; \zeta_2 \in \bbC,& 
\end{split} 
\end{align}
and hence,
\begin{align}
\begin{split}
\f{\Gamma(z_1) \Gamma(z_2)}{\Gamma(z_1 + z_3) \Gamma(z_2 - z_3)} = 
\prod_{n \in \bbN_0} \bigg[1 + \f{z_3}{n + z_1}\bigg] \bigg[1 - \f{z_3}{n + z_2}\bigg],& \\
z_1 \in \bbC \backslash \{- \bbN_0\}, \; z_2 \in \bbC \backslash \{- \bbN_0\}, \; z_3 \in \bbC,  \lb{23.6.97}
\end{split} 
\end{align}
a formula also to be found in \cite[No.~8.3251]{GR80}. Choosing
\begin{equation}
z_1 = \beta, \; z_2 = 1 - \beta - z, \; z_3 = 1 - \beta, 
\end{equation}
this implies
\begin{align}
m_{0,\beta,Lag}(z) &= \f{\beta - 1}{\beta \Gamma(2-\beta)} \f{z}{z-1+\beta} 
\prod_{n \in \bbN} \f{n(n+1)}{(n+\beta)(n+1-\beta)}    \no \\
& \quad \times \prod_{n \in \bbN} \f{n+1-\beta}{n} \bigg[1 - \f{1-\beta}{n+1-\beta-z}\bigg]  \no \\
&= C_1(\beta) \f{\beta - 1}{\beta \Gamma(2-\beta)} \f{z}{z-(1-\beta)} 
\prod_{n \in \bbN} \bigg[1 - \f{z}{n}\bigg] \bigg[1 - \f{z}{n+1-\beta}\bigg]^{-1},   \no \\
& \hspace*{4.8cm} z \in \bbC \backslash \{- \beta + \bbN\}, \; \beta \in (0,1),    \lb{23.6.99}
\end{align}
where we abbreviated
\begin{equation}
C_1(\beta) = \prod_{n \in \bbN} \f{n(n+1)}{(n+\beta)(n+1-\beta)} > 0, \quad \beta \in (0,1).
\end{equation}
One verifies that all zeros 
\begin{equation} 
a_n = n, \quad n \in \bbN_0,  
\end{equation} 
and poles 
\begin{equation} 
b_n = n+1 - \beta, \quad n \in \bbN_0, 
\end{equation} 
of $m_{0,\beta,Lag}(\, \cdot \,)$ are simple, the zeros and poles interlace (as $\beta \in (0,1)$), the residues 
at all the poles are strictly negative, and $m_{0,\beta,Lag}(\, \cdot \,)$ is real-valued on $\bbR$.
This corresponds to the situation discussed in \cite[Theorem~1 on p.~308]{Le80} (a result attributed to an unpublished paper by M.~G.~Krein), identifying 
\begin{align}
c < 0, \quad a_{-n}=b_{-n} = - \infty, \; n \in \bbN, \quad a_n = n, \; b_n = n+1-\beta, \; n \in \bbN_0,  
\end{align}
such that $a_n < b_n < a_{n+1}$, $n \in \bbN_0$. At any rate, these properties demonstrate the 
Nevanlinna--Herglotz property of $m_{0,\beta,Lag}(\, \cdot \,)$ for $\beta \in (0,1)$. 
This completes the treatment for the case $\beta\in (0,1)$.

Next, we consider the case $\beta\in (1,2)$.  In analogy to \eqref{23.6.69}--\eqref{23.6.72}, one computes
\begin{align}\lb{23.6.83}
\begin{split}
\wti y_{1,\beta}(z,0) &= 0,\quad\quad\quad \wti y_{2,\beta}(z,0) = 1,\\
\wti y_{1,\beta}'(z,0) &= \beta-1,\quad \, \wti y_{2,\beta}'(z,0) = 0,\quad \beta \in (1,2), \; z\in \bbC.
\end{split}
\end{align}
Thus, \eqref{23.6.64}--\eqref{23.6.66} and \eqref{23.6.83} imply
\begin{align}
\phi_{0,\beta}(z,x) &= (\beta-1)^{-1} y_{1,\beta}(z,x),\\
\theta_{0,\beta}(z,x) &= y_{2,\beta}(z,x),\quad \beta\in (1,2), \; z\in \bbC, \; x\in (0,\infty).
\end{align}
The requirement in \eqref{23.6.68} may be recast as
\begin{align}
\big[y_{2,\beta}(z,\,\cdot\,) + m_{0,\beta,Lag}(z)(\beta-1)^{-1}y_{1,\beta}(z,\,\cdot\,)\big]\in L^2\big((0,\infty);x^{\beta-1}e^{-x}\, dx\big),&\\
\beta\in (1,2), \; z\in \rho(T_{F,\beta,Lag}),&\no
\end{align}
or as, 
\begin{align}
\int^{\infty}\big| y_{2,\beta}(z,x) + m_{0,\beta,Lag}(z)(\beta-1)^{-1}y_{1,\beta}(z,x)\big|^2 x^{\beta-1}e^{-x}\, dx<\infty,&\lb{23.6.87}\\
\beta\in (1,2), \; z\in \rho(T_{F,\beta,Lag}).&\no
\end{align}
The asymptotic relation \eqref{23.6.52A} then implies for each $\beta\in (1,2)$ and $z\in \rho(T_{F,\beta,Lag})$, 
that the integrand
\begin{align}
\big| y_{2,\beta}(z,x) + m_{0,\beta,Lag}(z)(\beta-1)^{-1}y_{1,\beta}(z,x)\big|^2 x^{\beta-1}e^{-x}
\end{align}
in \eqref{23.6.87} behaves at $\infty$ like
\begin{align}
\bigg|\frac{\Gamma(2-\beta)}{\Gamma(1-\beta-z)}+m_{0,\beta,Lag}(z)(\beta-1)^{-1}\frac{\Gamma(\beta)}{\Gamma(-z)}\bigg|^2x^{-(2\Re(z)+\beta+1)}e^x.    \lb{23.6.88}
\end{align}
The expression \eqref{23.6.88} is integrable near $\infty$ with respect to Lebesgue measure $dx$ if and only if 
\begin{equation}
\frac{\Gamma(2-\beta)}{\Gamma(1-\beta-z)}+m_{0,\beta,Lag}(z)(\beta-1)^{-1}\frac{\Gamma(\beta)}{\Gamma(-z)}=0,
\end{equation}
so that
\begin{align}\lb{23.6.91}
\begin{split} 
m_{0,\beta,Lag}(z) = \f{(1-\beta)\Gamma(2-\beta)\Gamma(-z)}{\Gamma(\beta)\Gamma(1-\beta-z)} 
= - \f{\Gamma(2-\beta) \Gamma(-z)}{\Gamma(\beta-1) \Gamma(1-\beta-z)},&   \\
\beta\in (1,2), \; z\in \rho(T_{F,\beta,Lag}),&
\end{split} 
\end{align} 
in agreement with \cite[eq.~(6)]{De98} and the choice of $\Gamma_0, \Gamma_1$ therein. 

To prove that $m_{0,\beta,Lag}(\dott)$ is a Nevanlinna--Herglotz function in accordance with \eqref{23.5.12} one 
can follow the case $\beta \in (0,1)$ step by step: Choosing 
\begin{equation}
z_1 = 1, \; z_2 = -z, \; z_3 = \beta -1, 
\end{equation}
in \eqref{23.6.97}, 
one obtains in complete analogy to \eqref{23.6.99}, 
\begin{align}
m_{0,\beta,Lag}(z) &= \beta (1 - \beta) \Gamma(2 - \beta) \f{z + \beta -1}{z} 
\prod_{n \in \bbN} \f{(n+\beta)(n+1- \beta)}{n(n+1)}    \no \\
& \quad \times \prod_{n \in \bbN} \f{n}{n+1-\beta} \bigg[1 - \f{\beta - 1}{n-z}\bigg]  \no \\
&= C_2(\beta) \beta (1 - \beta) \Gamma(2-\beta) \f{z - (1-\beta)}{z} 
\prod_{n \in \bbN} \bigg[1 - \f{z}{n+1-\beta}\bigg] \bigg[1 - \f{z}{n}\bigg] ^{-1},   \no \\
& \hspace*{5.5cm} z \in \bbC \backslash \bbN_0, \; \beta \in (1,2),    \lb{6.3.114}
\end{align}
where we abbreviated
\begin{equation}
C_2(\beta) = \prod_{n \in \bbN} \f{(n+\beta)(n+1- \beta)}{n(n+1)} > 0, \quad \beta \in (1,2).
\end{equation}
One verifies that all zeros 
\begin{equation} 
a_n = n+1-\beta, \quad n \in \bbN_0,  
\end{equation} 
and poles 
\begin{equation} 
b_n = n, \quad n \in \bbN_0, 
\end{equation} 
of $m_{0,\beta,Lag}(\, \cdot \,)$ are simple, the zeros and poles interlace (as $\beta \in (1,2)$), the residues 
at all the poles are strictly negative, and $m_{0,\beta,Lag}(\, \cdot \,)$ is real-valued on $\bbR$.
This corresponds to the situation discussed in \cite[Theorem~1 on p.~308]{Le80}, identifying 
\begin{align}
c < 0, \quad a_{-n}=b_{-n} = - \infty, \; n \in \bbN, \quad a_n = n+1-\beta, \; b_n = n, \; n \in \bbN_0,  
\end{align}
such that $a_n < b_n < a_{n+1}$, $n \in \bbN_0$. Again, these properties demonstrate the 
Nevanlinna--Herglotz property of $m_{0,\beta,Lag}(\, \cdot \,)$ for $\beta \in (1,2)$. 
This completes the discussion of the case $\beta\in (1,2)$.

\begin{remark} \lb{23.r6.1}
In our original discussion of the Nevanlinna--Herglotz property of $m_{0,\beta,Lag}(\dott)$ for $\beta \in (0,1)$ (cf.\ 
\eqref{23.6.81}) and $\beta \in (1,2)$ (cf.\ \eqref{23.6.91}), we relied on reference \cite[p.~5]{MOS66}. However, as was kindly pointed out to us by Christian Berg \cite{Be19}, the monograph \cite{MOS66} left open the restrictions on the parameter $a$ in their 3rd and enlarged edition. (The condition $a > 0$ apparently appears in the earlier 1946 edition.) Hence we chose an alternative route of proof and based our arguments on formula \eqref{23.6.97}. A systematic approach to ratios of Gamma functions, their relations to Stieltjes and generalized Stieltjes functions and to completely monotonic functions, can be found in \cite{Be08} and \cite{BKP19}.   \hfill $\diamond$
\end{remark}

Finally, we treat the case $\beta = 1$. One computes
\begin{align}
\wti y_{1,1}(z,0) &= \lim_{x\downarrow 0}\frac{y_{1,1}(z,x)}{[-\ln(x)]} = \lim_{x\downarrow0}\frac{1-zx+\Oh(x^2)}{[-\ln(x)]}=0,\quad z\in \bbC,\lb{23.6.98y}\\
\wti y_{2,1}(z,0) &= \lim_{x\downarrow0}\frac{y_{2,1}(z,x)}{[-\ln(x)]}\no\\
&=
\begin{cases}
\displaystyle{\lim_{x\downarrow0}\dfrac{- \ln(x) - \psi(-z) - 2 \gamma_E + \Oh(x|\ln(x)|)}{[-\ln(x)]}},& z\in \bbC \backslash \{0\},  \\
\displaystyle{\lim_{x\downarrow0}\frac{- \ln(x) \big\{1 + C_0 [- \ln(x)]^{-1} + x + \Oh\big(x^2\big)\big\}}{[-\ln(x)]}},& z=0,
\end{cases}\no\\
&=1,\quad z\in \bbC,\lb{23.6.99y}\\
\wti y_{1,1}'(z,0) &= \lim_{x\downarrow0}\big\{ y_{1,1}(z,x) - \wti y_{1,1}(z,0)[-\ln(x)]\big\}\no\\
&= \lim_{x\downarrow0}y_{1,1}(z,x)=1,\quad z\in \bbC,\lb{23.6.100y}\\
\wti y_{2,1}'(z,0)&=\lim_{x\downarrow0}\big\{y_{2,1}(z,x) - \wti y_{2,1}(z,x)[-\ln(x)]\big\}\no\\
&=\lim_{x\downarrow0}\big\{y_{2,1}(z,x)+\ln(x)\big\}\no\\
&=
\begin{cases}
\displaystyle{\lim_{x\downarrow0}\big\{- \ln(x) - \psi(-z) - 2 \gamma_E + \Oh(x|\ln(x)|)+\ln(x)\big\}},& z\in \bbC \backslash \{0\},\\
\displaystyle{\lim_{x\downarrow0}\big[- \ln(x) \big\{1 + C_0 [- \ln(x)]^{-1} + x + \Oh\big(x^2\big)\big\}+\ln(x)\big]},&z=0,
\end{cases}\no\\
&=
\begin{cases}
-\psi(-z) - 2 \gamma_E, &z\in \bbC \backslash \{0\},\\
C_0,&z=0.
\end{cases}     \lb{23.6.101y}
\end{align}
(Again we used the notation, $\psi (\dott) = \Gamma'(\dott)/\Gamma(\dott)$.) As a result,
\begin{equation}\lb{23.6.102y}
\wti y_{1,1}(z,0)\wti y_{2,1}'(z,0) - \wti y_{1,1}'(z,0)\wti y_{2,1}(z,0) 
= 0\cdot\wti y_{2,1}'(z,0)-1\cdot 1 = -1, 
\quad z\in \bbC. 
\end{equation}

The identities \eqref{23.6.64}, \eqref{23.6.65} and \eqref{23.6.66} in conjunction with \eqref{23.6.98y}--\eqref{23.6.101y} and \eqref{23.6.102y} imply
\begin{align}
c_{\phi,1,1}(z) &= \wti y_{2,1}(z,0) = 1,\no\\
c_{\phi,2,1}(z) &= -\wti y_{1,1}(z,0) = 0,\no\\
c_{\theta,1,1}(z) &= -\wti y_{2,1}'(z,0) =
\begin{cases}
\psi(-z) + 2 \gamma_E, & z\in \bbC \backslash \{0\},\\
-C_0,&z=0,
\end{cases}\no\\
c_{\theta,2,1}(z) &= \wti y_{1,1}'(z,0)=1,\quad z\in \bbC,\lb{23.6.103y}
\end{align}
and
\begin{align}
\phi_{0,1}(z,x) &= y_{1,1}(z,x),\no\\
\theta_{0,1}(z,x) &=
\begin{cases}
[\psi(-z) + 2 \gamma_E] y_{1,1}(z,x) + y_{2,1}(z,x),& z\in \bbC \backslash \{0\},    \lb{23.6.104y}\\
-C_0y_{1,1}(z,x) + y_{2,1}(z,x),& z= 0,
\end{cases}\\
&\hspace*{4.2cm}z\in \bbC, \; x\in (0,\infty).\no
\end{align}

The requirement in \eqref{23.6.68} may be recast as
\begin{align}
\begin{split} 
\{[\psi(-z) + 2 \gamma_E + m_{0,1,Lag}(z)]y_{1,1}(z,\,\cdot\,) + y_{2,1}(z,\,\cdot\,)\}\in L^2\big((0,\infty);e^{-x}\, dx\big),&\lb{23.6.105y}\\
z\in \rho(T_{F,1,Lag})\backslash\{0\},&
\end{split} 
\end{align}
or as, 
\begin{align}
\begin{split} 
\int^{\infty}|[\psi(-z) + 2 \gamma_E + m_{0,1,Lag}(z)]y_{1,1}(z,x) + y_{2,1}(z,x)|^2e^{-x}\, dx<\infty,&     \lb{23.6.106y}\\
z\in \rho(T_{F,1,Lag})\backslash\{0\}.&
\end{split} 
\end{align}
The asymptotic relations \eqref{23.6.52A} and 
\begin{equation}\lb{23.6.103}
U(-z,b;x) \underset{x \uparrow \infty}{=} x^{-z}\big[1 + \Oh\big(|x|^{-1}\big)\big],
\quad z\in \bbC, \; b\in \bbC,
\end{equation}
imply that the integrand in \eqref{23.6.106y} behaves at $\infty$ like
\begin{align}
&|[\psi(-z) + 2 \gamma_E + m_{0,1,Lag}(z)]y_{1,1}(z,x) + y_{2,1}(z,x)|^2e^{-x}\no\\
&\quad=|[\psi(-z) + 2 \gamma_E + m_{0,1,Lag}(z)]F(-z,1;x) + \Gamma(-z)U(-z,1;x)|^2e^{-x}\no\\
&\quad=\bigg|[\psi(-z) + 2 \gamma_E + m_{0,1,Lag}(z)]\frac{1}{\Gamma(-z)}e^xx^{-1-z} + \Gamma(-z)x^{-z}\bigg|^2e^{-x}\no\\
&\quad=\bigg|[\psi(-z) + 2 \gamma_E + m_{0,1,Lag}(z)]\frac{1}{\Gamma(-z)} + \Gamma(-z)xe^{-x}\bigg|^2x^{-2(1+\Re(z))}e^{x}.     \lb{23.6.104}
\end{align}
The expression \eqref{23.6.104} is integrable near $\infty$ with respect to Lebesgue measure $dx$ if and only if 
\begin{align}
[\psi(-z) + 2 \gamma_E + m_{0,1,Lag}(z)]/\Gamma(-z)=0,
\end{align}
so that 
\begin{equation}
m_{0,1,Lag}(z) = - \psi(-z) - 2 \gamma_E,   \quad z\in \rho(T_{F,1,Lag}),    \lb{23.6.110y}
\end{equation} 
in agreement with \cite[eq.~(8)]{De98} (replacing $\Gamma(\dott)$ by $\ln(\Gamma(\dott))$ correcting an obvious  typographical error) and the choice of $\Gamma_0, \Gamma_1$ therein. 

The representation (cf.\ \cite[p.~13]{MOS66})
\begin{equation}
- \psi(-z) = \gamma_E + \sum_{\bbN_0} \bigg(\f{1}{n -z} - \f{1}{n+1}\bigg)
\end{equation}
proves the Nevanlinna--Herglotz property of $m_{0,1,Lag}(\dott)$, again in agreement with \eqref{23.5.12}. 

By \eqref{23.6.110y}, the poles of $m_{0,1,Lag}(\,\cdot\,)$ are $\bbN_0$.  In particular,
\begin{equation}
\sigma(T_{F,\beta,Lag}) = \begin{cases} \{n+1-\beta \}_{n\in \bbN_0}, & \beta \in (0,1), \\ 
\bbN_0, & \beta \in [1,2). 
\end{cases}
\end{equation}

For $\beta \in (0,1),$ the corresponding eigenfunction
for the eigenvalue $\lambda _{n}=n+1-\beta $ is 
\begin{align}
\begin{split} 
y_{n,\beta }(x) &=x^{1-\beta }\text{ }_{1}F_{1}(n-2\beta +2;2-\beta ;x) \\
&=\sum_{k=0}^{\infty }\frac{(n-2\beta +2)_{k}}{(2-\beta )_{k}k!}x^{k},  \quad
n\in \mathbb{N}_{0}, \; x \in (0,\infty), 
\end{split} 
\end{align} 
with $(\gamma)_0 = 1$, $(\gamma)_k = \Gamma(\gamma+k)/\Gamma(\gamma)$, $k \in \bbN$, denoting Pochhammer's symbol.

For $\beta \in \lbrack 1,2),$ the corresponding eigenfunction for $\lambda
_{n}=n$ is 
\begin{equation} 
y_{n,\beta }(x)=L_{n}^{\beta -1}(x), \quad n\in \mathbb{N}_{0}, \; x \in (0,\infty), 
\end{equation} 
the $n^{th}$ Laguerre polynomial. 

\begin{remark} \lb{23.r6.2}
As an interesting point of comparison and an illustration of the significant effect a change of boundary conditions may have on spectra, we recall that the self-adjoint realization $T_{N,\beta,Lag}$ of $\tau_{\beta}$ obtained by restricting $T_{max,\beta,Lag}$ to the ``Neumann'' (rather than the Dirichlet, resp., Friedrichs) domain
\begin{equation}
\dom(T_{N,\beta,Lag}) = \big\{y\in \dom(T_{max,\beta,Lag})\,\big|\, \wti y'(0)=0\big\},\quad \beta\in (0,2),
\end{equation}
has spectrum equal to $\bbN_0$ for all $\beta\in (0,2)$ (cf., e.g., \cite[Sect.~27]{Ev05}).  One notes that the boundary condition ``$[y,1](0)=0$'' employed in \cite[Sect.~27]{Ev05} is equivalent to $\wti y'(0)=0$.  In fact, $T_{N,\beta,Lag}$ has the classical Laguerre polynomials as eigenfunctions.  Thus, unlike $\sigma(T_{F,\beta,Lag})$, the spectrum of $T_{N,\beta,Lag}$ is independent of $\beta\in (0,2)$. \hfill $\diamond$
\end{remark}

\begin{remark} \lb{23.r6.3} 
In this paper, we have considered the Laguerre
parameter $\beta $ belonging to the range $(0,2)$. We note that, for $\beta \geq 2$, the 
Laguerre expression $\tau _{\beta ,Lag}$ is limit point at both 
$x=0$ and $x=\infty $ in $L^{2}((0,\infty );x^{\beta -1}e^{-x})$. 
Consequently, in this case, there is a unique self-adjoint extension of the
minimal operator $T_{\min ,\beta ,Lag}$ and the spectrum of this operator is 
$\mathbb{N}_{0}$ just like the case $\beta \in [1,2)$. Moreover, for
the eigenvalue $\lambda _{n}=n$, $n \in \bbN_0$, the corresponding eigenfunction is 
\begin{equation} 
y_{n,\beta }(x)=L_{n}^{\beta -1}(x), \quad n \in \bbN_0, \; x \in (0,\infty), 
\end{equation} 
just as in the case of $\beta \in [1,2)$.  \hfill $\diamond$
\end{remark}

\begin{remark} \lb{r23.6.4} Finally, we briefly discuss the non-classical Laguerre
case for $\beta \leq 0$. First, when $-\beta \in \mathbb{N}_{0}$, the
Laguerre expression $\tau _{\beta ,Lag}$ is limit point at both endpoints $x=0$ and $x=\infty $ 
in $L^2\big((0,\infty );x^{\beta -1}e^{-x}\big)$. In this
case, the unique self-adjoint extension of $T_{\min ,\beta ,Lag}$ has
spectrum $\{n\in \mathbb{N}_{0}\}_{n\geq -\beta }$ and the corresponding
eigenfunctions, $\big\{L_{n}^{\beta -1}(\cdot )\big\}_{n\geq -\beta }$, form a
complete orthogonal set in $L^2\big((0,\infty );x^{\beta -1}e^{-x}\big);$ see \cite{ELW04}
for details. The case $\beta \leq 0$ but $-\beta \notin \mathbb{N}_{0}$ is
studied in remarkable detail by Derkach in \cite{De98}; see also \cite{Kr79}. In this
situation, the appropriate function space setting is a Pontryagin space $\prod (\beta )$, 
not a Hilbert function space. Derkach shows that, if $-n<\beta <-n+1,$ the Laguerre polynomials 
$\big\{L_{m}^{\beta -1}\big\}_{m\geq 0}$
are orthogonal with respect to the indefinite inner product 
\begin{equation}
(f,g)=\int_{0}^{\infty} dx \, x^{\beta -1}\bigg( e^{-x}\overline{f(x)}g(x)  
-\sum_{j=0}^{n-1}(e^{-x} \overline{f}g)^{(j)}(0)\frac{x^{j}}{j!}\bigg).
\end{equation}
In addition, Derkach \cite{De98} determined the corresponding Weyl--Titchmarsh--Kodaira $m$-function 
as well as the self-adjoint operator $T_{\beta }$ in  $\prod
(\beta )$ having the Laguerre polynomials as eigenfunctions. \hfill $\diamond$
\end{remark}

\medskip

\noindent 
{\bf Acknowledgments.} We gratefully acknowledge very detailed comments by Mark Ashbaugh, Christian Berg, Charles Fulton, Hubert Kalf, and Jonathan Stanfill on an earlier version of this paper. In particular, 
Mark Ashbaugh, Charles Fulton, and Jonathan Stanfill directed us to various shortcomings in our treatment of the Legendre and Laguerre examples in Subsections \ref{23.e7.3.13} and \ref{23.e7.3.14}, and Christian Berg kindly pointed out to us the insufficiency of an earlier attempt to demonstrate the Nevanlinna--Herglotz property of \eqref{23.6.81} and \eqref{23.6.91}.  We also thank Mark Ashbaugh for very extensive correspondence regarding Section \ref{23.s6}, and Charles Fulton for numerous comments on the bulk of this paper. Finally, we also thank Dale Frymark, Annemarie Luger, and Tony Zettl for helpful discussions and correspondence. 

\medskip
 
 
\end{document}